\documentclass[12pt]{article}

\usepackage[top=1.5cm,bottom=1.5cm,left=1.5cm,right=1.5cm]{geometry}

\usepackage[utf8]{inputenc}
\usepackage[english]{babel}

\usepackage{amsfonts,amssymb,amsthm,amsmath}

\usepackage{lmodern}
\usepackage{microtype}

\usepackage{epigraph}

\usepackage{graphicx,xcolor}

\usepackage[pdfencoding=unicode, psdextra, citecolor=blue, urlcolor=blue,
	linkcolor=blue, colorlinks=true, bookmarksopen=true]{hyperref}

\usepackage{comment}

\usepackage{xspace}

\newcommand{\R}{\mathbb{R}}
\newcommand{\Z}{\mathbb{Z}}
\newcommand{\K}{\mathcal{K}}
\newcommand{\cl}{\mathrm{cl}\,}
\newcommand{\const}{\mathrm{const}}
\newcommand{\eqdef}{\stackrel{\mathrm{def}}{=}}
\newcommand{\I}{\mathrm{i}\mkern1mu}
\newcommand{\ddt}[1][]{
	\frac{d^{#1}}{dt^{#1}}
}
\newcommand{\cg}{($\mathfrak{G}$)\xspace}
\newcommand{\cw}{($\mathfrak{W}$)\xspace}

\newtheorem{theorem}{Theorem}
\newtheorem{corollary}{Corollary}
\newtheorem{proposition}{Proposition}
\newtheorem{lemma}{Lemma}

\theoremstyle{remark}

\theoremstyle{definition}
\newtheorem{definition}{Definition}

\usepackage{authblk}

\title{Derivatives of Sub-Riemannian Geodesics are $L_p$-H\"older Continuous}

\author[1]{Lev Lokutsievskiy\footnote{This work was performed at the Steklov International Mathematical Center and supported by the Ministry of Science and Higher Education of the Russian Federation (agreement no. 075-15-2022-265).}}
\author[2]{Mikhail Zelikin}
\affil[1]{Steklov Mathematical Institute of Russian Academy of Sciences}
\affil[2]{Lomonosov Moscow State University}

\sloppy

\begin{document}

\maketitle

\epigraph{\it Mathematics consists in proving the most obvious thing in the least obvious way.}{\it George P\'olya}

\begin{abstract}

	This article is devoted to the long-standing problem on the smoothness of sub-Riemannian geodesics. We prove that the derivatives of sub-Riemannian geodesics are always $L_p$-H\"older continuous. Additionally, this result has several interesting implications. These include (i) the decay of Fourier coefficients on abnormal controls, (ii) the rate at which they can be approximated by smooth functions, (iii) a generalization of the Poincar\'e inequality, and (iv) a compact embedding of the set of shortest paths into the space of Bessel potentials.
		
\end{abstract}

\section{Introduction}

Nonholonomic and non-Riemannian problems are currently being extensively studied by numerous mathematicians. The interest in these problems started to rapidly grow in the mid-20th century, following the works of famous mathematicians such as E. Cartan, Carath\'eodory, Gromov, Montgomery, Mitchell, Vershik, Agrachev, and many others. The most mysterious and intriguing object in the theory is abnormal geodesics, which arise as critical points of the end-point map. The question of how to investigate these geodesics has been present since the inception of the theory, and in particular, the question on their smoothness has not yet been resolved.

In Riemannian geometry, analyzing geodesic smoothness properties is relatively straightforward. Riemannian geodesics satisfy the Euler--Lagrange equation and hence they are as smooth (or analytic) as the right-hand side of the equation. However, sub-Riemannian geodesics do not always satisfy the system of Euler--Lagrange equations. Instead, they satisfy a system of differential inclusions that comes from the Pontryagin maximum principle (PMP). A sub-Riemannian geodesic is called normal if it satisfies PMP with a non-degenerate Hamiltonian, and in this case, it satisfies the Hamiltonian version of the Euler--Lagrange equation and hence it has to be smooth. However, strictly abnormal geodesics do not satisfy PMP with a non-degenerate Hamiltonian. In this case, the corresponding system of differential inclusions is not equivalent to any system of ordinary differential equations, and hence the classical Riemannian approach on investigating geodesics smoothness suffers almost complete failure.

The Pontryagin maximum principle still holds some potential for studying abnormal geodesics. However, investigations based on the principle typically depend on the step value of the sub-Riemannian manifold, as they all rely on the Goh condition. For example, geodesics on sub-Riemannian manifolds of step $s=2$ are smooth because the Goh condition implies that there are no abnormal geodesics in this case (see~\cite{AgrachevBook}). Even in the case $s=3$, it is very difficult to study smoothness of abnormal geodesics using this method (though there are some results in this direction~\cite{TanYang,LeDonneLeonardiMontiVittone}). In the case $s=4$, there are almost no general results. The only exception is work \cite{BarilariChitourJeanPrandiSigalotti}, where the authors study smoothness of abnormal geodesics on sub-Riemannian manifolds of rank 2 and step 4.

In 2016, Le Donne and Hakavuori achieved a remarkable result in~\cite{HakavuoriLeDonne}. They proved that sub-Riemannian abnormal geodesics cannot have corners. Notably, their proof does not rely on the Pontryagin maximum principle, and instead employs an idea of using an appropriate blow-up procedure. This result has been used by several other mathematicians, including~\cite{SilvaFigalliParusinskiRifford}, where authors establish that geodesics on an analytic three-dimensional sub-Riemannian manifold must be $C^1$-smooth, regardless of the step value. The above mentioned work~\cite{BarilariChitourJeanPrandiSigalotti} also draws on their result. Nevertheless, it should be noted that the absence of corners does not necessarily imply the smoothness of abnormal geodesics on  general sub-Riemannian manifolds.

Another interesting result is presented in~\cite{MontiHolder}. The authors prove that the derivatives of geodesics satisfy the H\"older condition in the special case of left-invariant sub-Riemannian structures on Carnot groups of rank 2 that additionally satisfy the following strong restriction: $[\mathfrak{g}_i,\mathfrak{g}_j]=0$ for $i,j\ge 2$ and $i+j>4$ (here $\mathfrak{g}=\bigoplus_{j=1}^s\mathfrak{g}_j$ is the corresponding Carnot stratification of the Lie algebra $\mathfrak{g}$).

However, despite the constant efforts of leading mathematicians, smoothness of abnormal sub-Riemannian geodesics has not yet been proven. There are well-known examples \cite[section 12.6.1]{AgrachevBook} of abnormal sub-Riemannian extremals (but not geodesics) that are non-smooth. Nonetheless, to date, there are not a single example of a non-smooth abnormal geodesic. The present work provides an explanation for this gap.

\medskip

We develop an interpolational estimate on abnormal controls and apply convex duality theory to the estimate, which allows us to prove the following result: 

\begin{theorem}
	\label{thm:face}
	Any arc length parametrized geodesic on a sub-Riemannian manifold of constant rank has $L_p$-H\"older continuous derivative for any $1 \leq p < \infty$.
\end{theorem}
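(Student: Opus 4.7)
The plan is to reduce the statement to strictly abnormal geodesics, since normal geodesics satisfy the Hamiltonian system coming from PMP and are therefore smooth. Once abnormality is in force, the goal becomes to obtain quantitative regularity of the control $u$ that, via the bijective correspondence $\dot\gamma(t)=\sum_i u_i(t)X_i(\gamma(t))$ in an orthonormal frame of the distribution, transfers to $\dot\gamma$. The constant rank assumption is what makes this last transfer lossless.

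First, I would set up PMP on a trivializing chart: write the geodesic as a control trajectory $\dot\gamma=\sum u_iX_i(\gamma)$ with $|u|\equiv 1$, and produce an adjoint covector $\lambda(t)$ with $\langle \lambda(t),X_i(\gamma(t))\rangle=0$ for all $i$. The abnormality relations, together with their derivatives along $t$, generate a linear system in $u$ with coefficients that are polynomial in $\lambda$ and involve iterated brackets of the $X_i$; this is the structural input that encodes all step-dependent Goh-type identities in a single algebraic package, so that no case analysis on the step $s$ is needed.

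The core step, following the abstract's mention of an ``interpolational estimate,'' would be to prove a quantitative inequality of the form
\[
\bigl\|u(\cdot+h)-u(\cdot)\bigr\|_{L_2(I_h)}\leq C\,|h|^{\alpha}
\]
(or a comparable Besov-type bound) holding on all abnormal controls, with $\alpha>0$ depending only on the local geometry. The idea is to test the abnormality relations against smooth bump functions at scale $h$, integrate by parts, and use cancellations forced by $\langle \lambda,X_i\rangle=0$ to trade one derivative on $\lambda$ for a fractional loss in $u$. This produces a duality pairing between $u$ and a class of test fields that naturally sits in a negative fractional Sobolev space.

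Next, I would apply convex duality to upgrade this $L_2$-type interpolational estimate to an $L_p$-H\"older estimate on $\dot u$ for every finite $p$: the interpolational bound says $u$ lies in a Besov space $B^{\alpha}_{2,\infty}$, and by dualizing against $L_{p'}$ test functions one obtains bounds in $B^{\beta}_{p,\infty}$ with $\beta>0$ for every $p<\infty$, which is the $L_p$-H\"older statement. The rank assumption then converts the $u$-estimate into the target estimate for $\dot\gamma$. The hardest step will be the interpolational inequality itself: it must be robust enough to survive arbitrary step and must extract genuine regularity rather than merely the absence of corners proved in~\cite{HakavuoriLeDonne}; the algebraic bookkeeping of iterated brackets along $\gamma$ and the uniform control of the implicit constant $C$ (in terms of $\lambda$ and the frame) are where the real technical work lies.
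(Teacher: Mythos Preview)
Your proposal has a genuine gap at its core, and the approach diverges fundamentally from the paper's.

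The paper does \emph{not} use the Pontryagin maximum principle, and makes no distinction between normal and abnormal geodesics. There is no adjoint covector $\lambda(t)$ anywhere in the argument. Instead, the interpolational estimate
\[
\int_0^1 u_1\dot\varphi\,dt \le C\,\|\varphi\|_r^{\theta}\|\dot\varphi\|_q^{1-\theta}
\]
is obtained by a direct comparison of lengths. One perturbs the control $u_1\mapsto u_1-\lambda\dot\varphi$, observes (by integrating the linearized flow by parts) that the resulting endpoint displacement is controlled by $\|\varphi\|$ rather than $\|\dot\varphi\|$, invokes the ball-box theorem to bound the sub-Riemannian distance needed to close the loop by the Euclidean displacement to the power $1/s$, and then uses optimality $l(x)\le l(y)+2d_\lambda$. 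The step $s$ enters the exponent $\theta$ precisely through the ball-box estimate; this ingredient is entirely absent from your plan.

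Your proposed mechanism --- differentiate the abnormality relations $\langle\lambda,X_i\rangle=0$, test against bump functions at scale $h$, and ``trade one derivative on $\lambda$ for a fractional loss in $u$'' --- is where the gap lies. Differentiating those relations produces algebraic identities $\sum_j\langle\lambda,[X_j,X_i]\rangle u_j=0$ with Lipschitz coefficients, and it is not at all clear how integrating such identities against bump functions would yield a genuine $|h|^\alpha$ gain; this is exactly the step-dependent obstacle that has blocked PMP-based approaches in the literature. You would be re-encountering the Goh-condition difficulties the paper explicitly set out to avoid.

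Your later idea --- convex duality to pass from the interpolational estimate to Besov membership, and then interpolation of Besov spaces to reach every finite $p$ --- \emph{is} what the paper does in Sections~\ref{subsec:duality} and~\ref{sec:interpolation}. So the second half of your outline is on target; it is the first half (the source of the interpolational estimate) that needs to be replaced by the variational length-comparison argument.
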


Therefore, in order to find a non-smooth abnormal geodesic, it is necessary to search within a narrow class of curves that have derivatives which are $L_p$-H\"older continuous with an exponent of $0<\alpha\leq 1$, and are not $C^1$-smooth. It should be noted that this class of curves is indeed very narrow, as for example, if $\alpha>1/p$, the class is in fact empty.

\medskip 

Theorem~\ref{thm:face} is formulated in a qualitative form. We state a quantitative formulation of this result in Theorem~\ref{thm:main} (see Section~\ref{sec:main_thm_statement}). Namely, Theorem~\ref{thm:main} gives exact bounds on the exponent~$\alpha$. These bounds depend only on $p$ and the step $s$ of the sub-Riemannian manifold. Some interesting corollaries of the main Theorem~\ref{thm:main} are given in Section~\ref{sec:corollaries}. They concern

\begin{itemize}
	
	\item the decay of the Fourier coefficients of controls on abnormal geodesics;
	
	\item the rate of the approximation of abnormal controls by smooth functions;
	
	\item the lower estimate of the coefficient in $L_p$-H\"older condition with an exponent $\alpha$, which is similar to the Poincar\'e inequality.
	
	\item the compactness of the set of geodesics in the space of Bessel's potentials $H^\alpha_p$.
	
\end{itemize}

\section{Thanks} Authors would like to express their deep gratitude to A.S.~Kochurov for a very helpful discussion, to A.I.~Tulenev for valuable advice in the theory of interpolation, and to L. Rizzi for a very good comment on Fourier series of optimal control. We are deeply grateful to A.A.~Agrachev and Ju.L.~Sachkov for their generous desire to share their knowledge of sub-Riemannian geometry with anyone who are able to listen (authors used this opportunity many times). We express our deep gratitude to M.~Sigalotti, D.~Prandi, A.~Hakavuori, and R.~Monti for their brilliant reports on Moscow seminar on geometric theory of optimal control, which give a priceless influence on the authors understanding of the nature of sub-Riemannian abnormal geodesics.

\section{Besov spaces and \texorpdfstring{$L_p$}{Lp}-H\"older condition.}

We start with a brief explanation on the concept of $L_p$-H\"older condition. Choose an arbitrary function $u\in L_p([t_1;t_2]\to\R^k)$. In that follows, we write $L_p(t_1;t_2) = L_p([t_1;t_2]\to\R^k)$ for short. Put $(\sigma_hu)(t)=u(t+h)$.

\begin{definition}
	
	Let $1\le p\le \infty$, $\delta>0$ be sufficiently small. The following function on $-\delta\le h\le\delta$ is called $L_p$-modulus of continuity of $u$ over $[t_1;t_2]$:
	\[
		\omega_p(h,u) =\|\sigma_h u-u\|_{L_p(I_{\mathrm{sign}\, h})}.
	\]
	where $I_+=(t_1;t_2-\delta)$ and $I_-=(t_1+\delta;t_2)$.
\end{definition}

It is evident that, $\omega_p(0,u)=0$ and $\omega_p(h,u)\to 0$ for $h \to 0$ (if $p \ne \infty$). In the case of smooth manifolds, one can introduce modulus of continuity as follows. Let $M$ be a smooth manifold and $g$ be a Riemannian metric on it. Consider an absolutely continuous curve $x:[t_1;t_2] \to M$ and put $u(t)=\dot x(t)$. In the definition of $L_p$-modulus of continuity, it is sufficient to replace $\sigma_h$ by the parallel transport along the curve $x(t)$ over time $h$ with the help of Levi-Civita connection, and to define the norm with the help of $g$. Let us remark that this definition depends on the choice of $g$ but not moderately high. Namely, let $g$ and $\tilde g$ be two Riemannian metrics on $M$. Denote by $\omega_p(\cdot,h)$ and $\tilde\omega_p(\cdot,h)$ the corresponding $L_p$-moduli of continuity. In this case, for any compact $\K\subset M$, there exists a constant $C\ge 1$ such that if $x(t)\in \K$ and $u(t)=\dot x(t)$, then

\[
	\frac1C \omega_p(u,h) \le \tilde\omega_p(u,h) \le C\,\omega_p(u,h).
\]

\begin{definition}
	\label{defn:Holder}
	
	A function $u$ is called  $L_p$-H\"older continuous on $[t_1;t_2]$ if there exist $\alpha>0$ and $c>0$ such that the following inequality (called $L_p$-H\"older condition) is fulfilled for all $0\le |h| \le \delta$:
	\[
		\omega_p(h,u)\le c |h|^\alpha.
	\]
	The number $\alpha$ is called the exponent of the $L_p$-H\"older condition. The minimal possible constant $c$ we denote by $c^\alpha_p(u)$:
	\[
		c^\alpha_p(u) = \sup_{0<|h|\le\delta} \frac{\omega_p(u,h)}{|h|^\alpha}.
	\]
	
\end{definition}

It is easy to check that the definition of $L_p$-H\"older continuity with a given exponent $\alpha$ does not depend on the choice of $\delta>0$ (but constant $c^\alpha_p(u)$ does) if $\delta$ is sufficiently small. It is evident that in the case $p=\infty$, we obtain usual definitions of the modulus of continuity and H\"older continuous functions. The moduli of continuity are directly related to the Besov and Sobolev spaces (see~\cite[5.3]{Stein}), which will be repeatedly used in this paper. 

\begin{definition}
	\label{defn:Besov_space}
	The Nikolski--Besov (or simply Besov\footnote{According to the authors knowledge, spaces $B^{\alpha}_{p,\infty}$ were first introduced and studied by Nikolski. Then his student Besov introduced spaces $B^{\alpha}_{p,q}$ for arbitrary $1\le q\le\infty$ and developed the corresponding theory. Then spaces $B^{\alpha}_{p,q}$ become much more convenient to work with than just the Nikolski spaces $B^{\alpha}_{p,\infty}$ because of the very nice interpolational properties of the Besov spaces $B^{\alpha}_{p,q}$ with $q\ne 1,\infty$.}) space $B^{\alpha}_{p,\infty}(t_1,t_2)$ (where $0<\alpha<1$) consists of functions $u\in L_p(t_1,t_2)$ that satisfy
	\[
		\|u\|_{B^{\alpha}_{p,\infty}(t_1,t_2)} \eqdef \|u\|_p + c^\alpha_p(u) < \infty.
	\]
\end{definition}

Let us remark that the different choices of $\delta>0$ produces different norms on $B^{\alpha}_{p,\infty}(t_1,t_2)$, but all of them are pairwise equivalent if $\delta$ is sufficiently small (see \cite[4.4.1]{Triebel}). Thus, the Nikolski--Besov space $B^{\alpha}_{p,\infty}(t_1,t_2)$ consists of functions $L_p(t_1,t_2)$ that are $L_p$-H\"older continuous with the exponent $\alpha\in(0;1)$.

In what follows, we use interpolation theory of Banach spaces and theory of Besov spaces very often. For the lack of the possibility to give a short introduction into so solid part of mathematics, we refer the reader to the excellent book \cite{Triebel} for details.

\section{The main result}
\label{sec:main_thm_statement}

Let $(M,\Delta,g)$ be a smooth sub-Riemannian manifold of constant rank, i.e.~$M$ is a smooth manifold, $\dim M=n$, $\Delta(x)\subset T_xM$ is a smooth distribution with a fixed dimension $\dim \Delta(x)\equiv k<n$, and $g(x)$ is a dot product on $\Delta(x)$, which smoothly depends on $x$. Besides, following~\cite{AgrachevBook}, we include into the definition the demand that the distribution $\Delta(x)$ meets the H\"ormander condition (the bracket-generating condition). A Lipschitz continuous curve $x(t)\in M$, $t\in[0;1]$, is called horizontal (or admissible) if $\dot x(t)\in \Delta(x(t))$ for a.e.~$t$. The length of a (horizontal) curve is defined in the standard way:
\[
l(x) = \int_0^1 |\dot x(t)|_{g(x(t))}\,dt.
\]

Let us remind shortly various definitions of local minima (see \cite{AgrachevSmoothMinimum}). Let $x(t)$ be a horizontal curve.

\begin{itemize}
	
	\item[$(G)$] Curve $x(t)$ is called sub-Riemannian shortest path if any other horizontal curve $y(t)$ that has $x(0)=y(0)$ and $x(1)=y(1)$ meets the condition $l(x)\le l(y)$.
	
	\item[$(C^0)$] Curve $x(t)$ is called $C^0$-local minimum if there exists a neighborhood $U\subset M$ of the curve $x(t)$ such that any horizontal curve $y(t)$ that lies in $U$ and has  $x(0)=y(0)$, $x(1)=y(1)$ meets $l(x)\le l(y)$.
	
	\item[$(W^1_p)$] Curve $x(t)$ is called $W^1_p$-local minimum for
	$p=1$ or $p=\infty$  if there exists $\varepsilon>0$  such that any horizontal curve $y(t)$ that satisfies $x(0)=y(0)$, $x(1)=y(1)$, and\footnote{Here we use any finite trivialization $TM$ in a neighborhood of curve $x(t)$. The choice of trivialization can affect on $\varepsilon$, but it do not affect on the definition of the local minimum itself.} $\|\dot x-\dot y\|_p<\varepsilon$ meets $l(x)\le l(y)$.
\end{itemize}

Sometimes $C^0$-local minimum is called strong, $W^1_\infty$ --- weak, and $W^1_1$ --- Pontryagin's. It is evident that there are the following implications: $G\Rightarrow C^0\Rightarrow W^1_1\Rightarrow W^1_\infty$ (see\cite{AgrachevBook}).

\begin{theorem}
	\label{thm:main}
	
	Let $\K\subset M$ be a compact set and distribution $\Delta$ at points of $\K$ meet the H\"ormander condition of step that is not greater than $s$.
	
	\begin{itemize}
		\item[\cw] If $x(t)$ is a $W^1_1$-local minimum of the sub-Riemannian distance, $x(t)\in \K$, and $|\dot x(t)|_g\equiv\const$, then derivative $\dot x$ is $L_2$-H\"older continuous with exponent $\alpha=\frac{1}{s-1}$, i.e.
		\[
			\dot x\in B^\frac1{s-1}_{2,\infty}(0;1).
		\]
		Moreover, for any $2\le p<\infty$, the derivative $\dot x$ is $L_p$-H\"older continuous with any exponent $\alpha<\frac{2}{p(s-1)}$, i.e.~$\dot x\in B^\alpha_{p,\infty}(0;1)$.
		
		\item[\cg] For any $2+\frac{1}{s-1}\le p<\infty$, $0<\beta<\frac{1}{p(s-1)}$, and $0<\kappa<1-\beta(s-2)$, there exists a constant\footnote{Constant $C$ depends on $\K$, $p$, $\beta$, and $\kappa$, but it does not depend on $x(\cdot)$.} $C>0$ such that any sub-Riemannian shortest path $x(t)\in\K$ with $|\dot x(t)|_g\equiv l(x)$ satisfies
		\[
			\|\dot x\|_{B^\beta_{p,\infty}(0;1)}\le C l(x)^\kappa.
		\]

	\end{itemize}
\end{theorem}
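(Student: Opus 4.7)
First I would apply Pontryagin's maximum principle to the $W^1_1$-local minimum $x(t)$ and fix a Lipschitz adjoint covector $\lambda(t)$ along $x$ together with the associated maximality relations. In a local orthonormal frame $f_1,\dots,f_k$ of $\Delta$ one writes $\dot x=\sum u_i\,f_i(x)$ with $|u|\equiv\mathrm{const}$; iterated differentiation of the abnormal identities $\langle\lambda,f_i(x)\rangle=0$ along $x$ generates a tower of constraints involving iterated brackets $[f_{i_1},[f_{i_2},\ldots]]$ evaluated against $\lambda$. Under the step-$s$ H\"ormander hypothesis on $\K$ these brackets span $T_xM$ at depth $s$, so the constraints become truly nondegenerate at that order, which is what will ultimately pin the exponent $1/(s-1)$.

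Second, for each small $h>0$ I would build a family of needle-type control variations $v_h$ concentrated on subintervals of length $\sim h$ whose moments up to order $s-2$ vanish. Chronological calculus then shows that such a $v_h$ shifts the endpoint $x(1)$ only at order $h^{s}$. Combined with the $W^1_1$-local minimality of $x$ and the multiplier $\lambda$, this yields a variational inequality
\[
\Bigl|\int_0^1\langle u(t),v_h(t)\rangle\,dt\Bigr|\le C\,h^{s},
\]
valid uniformly over the admissible test family. This is the \emph{interpolational estimate on abnormal controls} advertised in the abstract: it says that $u$ is almost $L_2$-orthogonal to a rich, scale-sensitive family of test fields of order $s-1$. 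Specialising $v_h$ to time-shifted differences and invoking convex duality (Hahn--Banach plus a Legendre-type identification of the polar of the admissible test family with the subspace of functions with vanishing moments of order $s-2$) then converts the inequality into the key bound $\omega_2(h,\dot x)\lesssim h^{1/(s-1)}$, i.e.\ $\dot x\in B^{1/(s-1)}_{2,\infty}(0;1)$, which is the first assertion of \cw.

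The remainder is bookkeeping. Because $|\dot x|\equiv\mathrm{const}$ we have the trivial bound $\omega_\infty(h,\dot x)\le 2\|\dot x\|_\infty$, and H\"older's inequality at the level of the modulus,
\[
\omega_p(h,\dot x)^p\le \omega_\infty(h,\dot x)^{\,p-2}\,\omega_2(h,\dot x)^2,
\]
yields $\omega_p(h,\dot x)\lesssim h^{2/(p(s-1))}$ for every $2\le p<\infty$, finishing \cw. For \cg, the stronger \emph{global} shortest-path hypothesis admits reparametrizations and direct comparisons between curves of different lengths, so every constant in the variational inequality above picks up an explicit power of $l(x)$; performing the same interpolation between $\omega_2$ and $\omega_\infty$ with these length-weighted constants produces the bound $\|\dot x\|_{B^\beta_{p,\infty}}\le C\,l(x)^\kappa$, with the admissible ranges of $\beta$ and $\kappa$ reflecting how far inside the interpolation segment one steps.

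The main obstacle I expect is the construction and quantitative control of the variations $v_h$: matching their vanishing-moment order to the step $s$ and proving that the endpoint defect really collapses to $O(h^{s})$ uniformly over curves in $\K$ is precisely where the full step-$s$ H\"ormander structure, and not merely the second-order Goh condition, must be used. Tracking absolute constants through the convex-duality step tightly enough to land on $B^{1/(s-1)}_{2,\infty}$ rather than on a slightly weaker Besov scale is the other delicate point.
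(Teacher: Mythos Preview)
Your plan has a genuine structural gap: the mechanism you propose for producing the exponent $1/(s-1)$ is not the one that works here, and it is precisely the route that has so far failed in general.

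You want to apply the Pontryagin maximum principle, fix an abnormal covector $\lambda(t)$, differentiate the identities $\langle\lambda,f_i\rangle=0$ up to order $s$, and then design needle variations $v_h$ with vanishing moments up to order $s-2$ so that ``chronological calculus shows the endpoint shifts only at order $h^{s}$''. This is exactly the bracket-tower strategy that the existing literature uses (e.g.\ Monti's paper cited in the introduction), and it is known to succeed only under very restrictive algebraic hypotheses on the structure (rank~2, special commutator relations, low step). For a general sub-Riemannian manifold of step $s$ there is no reason the $s$-th order chronological remainder collapses the way you need: the higher brackets interact with the unknown control $u$ itself, and the ``nondegeneracy at depth $s$'' you invoke does not by itself give an inequality of the shape $|\int\langle u,v_h\rangle|\le Ch^{s}$. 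The paper's introduction explicitly flags PMP-based approaches as the ones that get stuck beyond step~3.

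The actual proof avoids PMP entirely. One takes a smooth test function $\varphi\in C_0^\infty(0;1)$ and perturbs a single control component, $u_1\mapsto u_1-\lambda\dot\varphi$. A \emph{second-order} Taylor expansion (not $s$-th order) of the endpoint map, after killing the first variation by a single linear constraint on $\varphi$, gives a Euclidean displacement $|y(1,\lambda)-x^1|\le c\lambda^2\|\varphi\|_2^2$. The step $s$ enters only through the ball-box theorem, which converts this into a sub-Riemannian distance $\lesssim(\lambda^2\|\varphi\|_2^2)^{1/s}$. Closing up with a short arc and comparing lengths yields the interpolational inequality
\[
\int_0^1 u_1\dot\varphi\,dt \le C\,\|\varphi\|_r^{\theta}\,\|\dot\varphi\|_q^{1-\theta},
\]
and convex duality identifies this as an upper bound on the $K$-functional of the pair $(L_{q^*},W^1_{r^*})$; real interpolation then places $u_1$ in the claimed Besov space. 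So the $s$ in $1/(s-1)$ comes from ball-box, not from an $s$-fold bracket expansion. Your last step, interpolating $\omega_p$ between $\omega_2$ and $\omega_\infty$ via H\"older, is fine and would recover the $p>2$ statement once the $p=2$ case is in hand; the difficulty is entirely upstream.
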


The full proof is given in Section \ref{sec:proof}. It is based on a new approach on how to investigate smoothness of sub-Riemannian optimal controls. We call it \textbf{the duality method of interpolational estimates}. Namely, we intend to construct an upper estimate on any optimal control $u$ of the following form:
\[
	\exists q,r\ge 1\ \exists\theta\in(0;1)\ \exists c>0
	\ \ :\ \ 
	\forall \varphi\in C_0^\infty(0;1)\qquad
	\int_0^1 u\dot\varphi\,dt \le c\|\varphi\|_r^{1-\theta}\|\dot\varphi\|_q^\theta.
\]
This estimate appears to be dual to an estimate on $K$-functional of an interpolation of the corresponding Sobolev spaces\footnote{Detailed definitions and properties of $K$-functional in general theory of interpolation can be found in \cite{Stein, Triebel}. The $K$-functional corresponding to our case is defined in~\eqref{eq:K_functional}.}. This leads to a possibility to prove that optimal control $u$ does satisfy $L_p$-H\"older condition.

\section{Corollaries of the main theorem on \texorpdfstring{$L_p$}{Lp}-H\"older continuity of sub-Riemannian geodesics derivatives.}

\label{sec:corollaries}

Theorem~\ref{thm:main} has many convenient consequencies. For instance, Theorem \ref{thm:face} that is formulated in the introduction. Here we give its proof.

\begin{proof}[Proof of Theorem~\ref{thm:face}]
	
	The H\"ormander condition guarantees that $s<\infty$ on compact $\K$ since the step is a lower semi-continuous function and, hence, it reaches its minimum on compact $\K$. Further, a geodesic can be divided into a finite number of parts, and it is a shortest path on each of them. Hence, according to Theorem~\ref{thm:main}, the velocity on each of these parts satisfies $L_p$-H\"older condition for any $2\le p<\infty$. It remains to note that $L_p$-H\"older condition for some $p$ implies $L_q$-H\"older condition for any $1\le q\le p$.
	
\end{proof}

Now, let us suppose  for the sake of statements simplicity that
\[
	\Delta(x)=\mathrm{span}(f_1(x),\ldots,f_k(x)).
\]
In this case, horizontal curves have velocities of the form   
\[
	\dot x(t) = \sum_{j=1}^k u_j(t) f_j(x(t)),
\]
where $u=(u_1,\ldots,u_k)$ is a control.

Since $x(t)\in W^1_\infty(0;1)$, one obtains  $f_j(x(t))\in W^1_\infty(0;1)$ and $g(x(t))\in W^1_{\infty}(0;1)$. So control           
\[
	u(t) = g(x(t))[f_i(x(t)),f_j(x(t))]^{-1}g(x(t))[\dot x(t),f_j(x(t))]
\]
appears to satisfy $L_p$-H\"older condition with the same exponent $\alpha$ as $\dot x(t)$. Moreover, the part \cg of Theorem~\ref{thm:main} remains true with $u$ in place of $\dot x$ (only constant $C$ may differ) as fields $f_j$ and their derivatives $\frac{\partial}{\partial x} f_j$ are bounded on compact $\K$.

\medskip

The fact that control $u$ is $L_p$-H\"older continuous makes it possible to estimate rate of decay for Fourier series coefficients. Let $\ell_\gamma=\ell_\gamma(\R^k)$ for $\gamma\ge 1$ denote the space of sequences $\{v^m\}_{m=-\infty}^\infty$, $v^m\in\R^k$, such that $\sum_m|v^m|^\gamma<\infty$.

\begin{corollary}[on the Fourier coefficients]

	Let $\hat u^m$, $m\in\Z$, be the Fourier coefficients of the function $u\in L_\infty(0;1)$ and $S_N$ be the partial Fourier sum, $S_N=\sum_{m=-N}^N\hat u^me^{2\pi\I mt}$. 
	
	\begin{itemize}
		
		\item[\cw] Let $x(t)$ be either a geodesic or $W^1_1$-local minimum and $|u(t)|_g\equiv l(x)$ be the corresponding control. Then for any\footnote{Here $s\ge2$ is the maximal step at points of the curve $x$.} $\alpha\le\min\{\frac{1}{s-1},\frac12\}$, $\alpha\ne\frac{1}{s-1}$ and $\gamma>\max\{1,2-\frac{4}{s+1}\}$ one has\footnote{Authors would like to express their gratitude to professor L. Rizzi for a very useful discussion about the case $\alpha>\frac12$.}	
		\[
			\sum_{m\in\Z} |m|^{2\alpha}|\hat u^m|^2 < \infty;
			\qquad
			\hat u^m \in \ell_\gamma;
			\quad\text{and}\quad
			\exists C>0:\ \forall N\ \ \|u-S_N\|_2 \le CN^{-\alpha}.
		\]
		
		\item[\cg] If $\K\subset M$ is compact, then for any $\beta<\frac1{2s-1}$, $0<\kappa<1-\beta(s-2)$, and $\delta>\frac{2}{1+2\beta}$, there exists a constant $C$ such that for any shortest path $x$ lying in $\K$ with the control $|u(t)|_g\equiv l(x)$, one has	
		\[
			\sum_{m\in\Z} |m|^{2\beta}|\hat u^m|^2 < Cl(x)^\kappa;
			\qquad
			\|\hat u^m\|_{\ell_\delta} \le Cl(x)^\kappa;
			\quad\text{and}\quad
			\forall N\ \ \|u-S_N\|_2 \le Cl(x)^\kappa N^{-\beta}.
		\]
		
	\end{itemize}
	
\end{corollary}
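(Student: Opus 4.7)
The plan is to derive all three assertions from the Besov regularity that the control $u$ inherits from $\dot x$, as noted in the paragraph immediately before the corollary. First I would invoke Theorem~\ref{thm:main}: part \cw puts $u$ into $B^{1/(s-1)}_{2,\infty}(0;1)$ and, for $2\le p<\infty$, into $B^{\alpha'}_{p,\infty}(0;1)$ with $\alpha'<\tfrac{2}{p(s-1)}$, while part \cg gives the quantitative bound $\|u\|_{B^{\beta^{+}}_{p,\infty}}\le C\,l(x)^{\kappa}$. The key ingredient is then the classical continuous embedding $B^{\beta}_{2,\infty}(0;1)\hookrightarrow B^{\alpha}_{2,2}(0;1)=H^{\alpha}(0;1)$ for every $\alpha<\beta$ (see \cite{Triebel}), combined with the Fourier characterization $u\in H^{\alpha}\Longleftrightarrow \sum_{m}|m|^{2\alpha}|\hat u^m|^{2}<\infty$; this yields the first assertion of each bullet at once. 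In the quantitative case \cg I need one soft additional step: since $p\ge 2$, the inclusion $L_p(0;1)\hookrightarrow L_2(0;1)$ delivers $B^{\beta^{+}}_{p,\infty}\hookrightarrow B^{\beta^{+}}_{2,\infty}$, after which I choose $\beta^{+}\in(\beta,1/(2s-1))$ sufficiently close to $\beta$ to keep $\kappa<1-\beta^{+}(s-2)$, and descend into $H^{\beta}$.

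For the $\ell_\gamma$ claim I would use a single application of H\"older. Setting $a_m=|m|^{\alpha}|\hat u^m|\in\ell_2$, the identity
\[
	\sum_{m\neq 0}|\hat u^m|^{\gamma}=\sum_{m\neq 0}a_m^{\gamma}\,|m|^{-\alpha\gamma}
\]
combined with H\"older on the right (conjugate exponents $2/\gamma$ and $2/(2-\gamma)$, for $\gamma\in[1,2)$) leaves me with the convergent factor $\|a\|_2^{\gamma}$ and a power series $\sum|m|^{-2\alpha\gamma/(2-\gamma)}$ that converges iff $\gamma>2/(1+2\alpha)$. Letting $\alpha$ approach the ceiling $\min\{1/(s-1),1/2\}$ from below produces the stated threshold $\gamma>\max\{1,2-4/(s+1)\}$ for \cw; the case $\gamma\ge 2$ is simply Parseval. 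For \cg the same computation with $\alpha$ replaced by $\beta$ and $\|a\|_2\le C\,l(x)^\kappa$ yields the quantitative $\ell_\delta$-bound whenever $\delta>2/(1+2\beta)$.

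The partial-sum estimate is then automatic:
\[
	\|u-S_N\|_2^{2}=\sum_{|m|>N}|\hat u^m|^{2}\le N^{-2\alpha}\sum_{|m|>N}|m|^{2\alpha}|\hat u^m|^{2},
\]
which is $O(N^{-2\alpha})$ in \cw and $O(l(x)^{2\kappa}N^{-2\beta})$ in \cg. The whole corollary thus reduces to routine Besov-space bookkeeping once Theorem~\ref{thm:main} is in hand. The one point where I would take some care is the uniformity of all constants in \cg over the family of minimizers $x\subset\K$: this holds because every embedding invoked has an operator norm depending only on the Besov parameters $p,\beta^{+},\beta$, so the factor $l(x)^{\kappa}$ is transported through each step linearly without accumulation of path-dependent constants. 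I do not expect a genuine analytic obstacle; the bulk of the work has already been done in Theorem~\ref{thm:main}.
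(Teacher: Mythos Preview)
Your overall strategy is sound and, in fact, more self-contained than the paper's: where the paper cites Pinsky's theorem for the partial-sum estimate, Pinsky's exercise for the weighted $\ell_2$ sum, and Neugebauer's paper for the $\ell_\gamma$ statement, you derive all three directly from $\sum_m|m|^{2\alpha}|\hat u^m|^2<\infty$ via elementary H\"older/tail arguments. That is a genuine simplification and worth keeping.

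There is, however, one real gap. You write the chain $B^\beta_{2,\infty}(0;1)\hookrightarrow B^\alpha_{2,2}(0;1)=H^\alpha(0;1)$ and then invoke ``the Fourier characterization $u\in H^\alpha\Leftrightarrow\sum_m|m|^{2\alpha}|\hat u^m|^2<\infty$''. That characterization is for $H^\alpha(\mathbb T)$, the torus, \emph{not} for $H^\alpha(0;1)$. The two do not coincide: take $u(t)=t$, which lies in $H^\alpha(0;1)$ for every $\alpha$, yet $\hat u^m\sim 1/m$ and the weighted sum diverges for all $\alpha\ge\frac12$. The obstruction is the jump of the periodic extension at $t=0\sim1$. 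This is precisely the point the paper's proof isolates: it shows that an $L_\infty$ function which is $L_2$-H\"older on $(0;1)$ with exponent $\alpha$ is \emph{periodically} $L_2$-H\"older with exponent $\min\{\alpha,\tfrac12\}$, the $\tfrac12$ arising from the bound $\hat\omega_2(h,u)^2\le\omega_2(h,u)^2+4\|u\|_\infty^2\,h$. This is exactly where the ceiling $\min\{\tfrac1{s-1},\tfrac12\}$ in the statement comes from, and your argument as written does not explain it --- your embedding chain would seem to give every $\alpha<\tfrac1{s-1}$, which is false for $s=2$.

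The fix is easy and fits your framework: after landing in $B^\alpha_{2,2}(0;1)$ with $\alpha<\tfrac12$, note that for $u\in L_\infty$ the periodic extension lies in $B^\alpha_{2,2}(\mathbb T)$ (the boundary contribution to the Gagliardo seminorm is $\int_0^h\int_{1-h}^1|t+1-s|^{-1-2\alpha}\,ds\,dt=O(h^{1-2\alpha})$, finite for $\alpha<\tfrac12$), and only then apply the Fourier characterization. With that insertion your proof is complete and arguably cleaner than the original.
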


\begin{proof}
	
	These are well known properties of the Fourier coefficients of functions being $L_p$-H\"older continuous. Let us prove, say, \cw (the proof of \cg is fully similar). Control $u$ satisfies $L_2$-H\"older condition with any index $0<\alpha<\frac{1}{s-1}$ due to Theorem \ref{thm:main} (in the case \cw, one can always choose, say, the graph of $x$ as $\K$). We start with proving the third assertion. The main tool here is \cite[Theorem 1.3.9]{Pinsky}. Pinsky uses a slightly different shifting operator $\hat\sigma_h$ (instead of $\sigma_h$ used in the present paper) to define $L_p$-H\"older continuity. Namely, he defines the function $\hat\sigma_hu$ to be the periodic shift of $u$:
	\[
		\hat\sigma_hu(t) = u(\{t+h\}\text)\text{ for a.e.\ }t\in[0;1]
	\]
	(here $\{\cdot\}$ stand for the fractional part of a number). We will say that $u$ is periodically $L_p$-H\"older continuous with exponent $\hat\alpha$, if it is $L_p$-H\"older continuous wrt periodic shift $\hat\sigma_h$, i.e.\ if there exists $c>0$ such that
	\[
		\hat\omega_p(h,u)\eqdef\|u-\hat\sigma_hu\|_{L_p(0;1)}\le \hat c|h|^{\hat \alpha}.
	\]

	It is easy to see that if $u\in L_\infty(0;1)$, then these two definitions of $L_p$-H\"older continuity  are equivalent, but the exponents may differ. Indeed, on the one hand, $\omega_p(h,u)\le \hat\omega_p(h,u)$. So, if $u$ is periodically $L_p$-H\"older continuous with an exponent $\hat\alpha$, then it is $L_p$-H\"older continuous with the same exponent $\hat\alpha$ (or greater). On the other hand, for $h>0$, we have
	\begin{multline*}
		\hat\omega_p(-h,u)^p = \hat\omega(h,u)^p = 
			\int_0^{1-h}|\sigma_hu-u|^p\,dt + \int_{1-h}^1 |\sigma_{-1+h}u-u|^p\,dt =\\
		 =\omega_p(h,u)^p + (\|\sigma_{-1+h}u-u\|_{L_p(1-h,1)})^p \le 
		 	\omega_p(h,u)^p + (2\|u\|_\infty h^{1/p})^p = 
		 	\omega_p(h,u)^p + 2^p\|u\|_\infty^p h
	\end{multline*}
	So if $u\in L_\infty(0;1)$ is $L_p$-H\"older continuous with an exponent $\alpha$, then it is periodically $L_p$-H\"older continuous with the exponent $\min\{\alpha,\frac1p\}$ (or greater). Summarizing,
	\[
		\min\big\{\alpha,\frac1p\big\}\le\hat\alpha\le\alpha.
	\]

	\cite[Theorem 1.3.9]{Pinsky} states that if $u$ is periodically $L_2$-H\"older continuous with an exponent $\hat\alpha$, then $\|u-S_N\|_2 \le CN^{-\hat\alpha}$ and the third assertion is proved. The first one follows from the third by using \cite[Exercise 1.3.15]{Pinsky}. The second assertion can be found, for instance, in \cite{Neugebauer} by using the inclusion $B^\alpha_{p,\infty}(0;1)\subset B^{\alpha-\varepsilon}_{p,q}(0;1)$, which is valid for all $\varepsilon>0$ and $1\le q\le\infty$ \cite[4.6.1]{Triebel}.
	
	The case \cg has a very similar proof. The only differences are the following: (i) since $u\in B_{2+\frac{1}{s-1},\infty}^{\beta}(0;1)$ for any $\beta<\frac{1}{2s-1}$, we have $u\in B_{2,\infty}^{\beta}(0;1)$; (ii) since $s\ge 2$, we have $\min\{\frac1{2s-1},\frac12\}=\frac1{2s-1}$.
	
\end{proof}

Another property of $L_p$-H\"older continuity of the control is a possibility to approximate it well by smooth functions. This property is non-trivial. There are a lot of functions in $L_\infty(0;1)$ that can not be approximated well by smooth functions. Namely, there exists $u\in L_\infty(0;1)$ such that for any series of functions $w_\varepsilon\in C^1$ approximating $u$, $\|u-w_\varepsilon\|_2<\varepsilon$, and any $\gamma>0$
one has $\varepsilon^\gamma\|\dot w_\varepsilon\|_\infty\to \infty$ as $\varepsilon\to +0$. In other words, while one is approximating $u$ by smooth functions, their derivatives grow faster than any negative power of $\varepsilon$. However, for functions that are $L_p$-H\"older continuous, it is possible to select smooth approximating functions that have much smaller derivatives. In the corollary below, we suggest one possible way of choosing norms, but it is possible to use other norms with the help of interpolation methods (see \cite{Triebel}).

\begin{corollary}[on the approximation rate by smooth functions]
	Let $x(t)$  be either a geodesic or $W^1_1$-local minimum and $|u(t)|_g\equiv l(x)$ be a control. If\footnote{Where $s$ is the sub-Riemannian step as always.} $\gamma > s-1$, then there exists a constant $C>0$ such that, for any $\varepsilon>0$, there is a function $w\in C^\infty(\R)$ such that
	\[
		\|w-u\|_{L_2(0;1)} \le C\varepsilon
		\quad\text{and}\quad
		\varepsilon^\gamma\|\dot w\|_{L_\infty(0;1)}\le C.
	\]
\end{corollary}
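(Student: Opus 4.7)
The plan is a mollification argument driven by the Besov regularity of the control. By Theorem~\ref{thm:main}(\cw), together with the remark earlier in this section that the control $u$ inherits the same $L_p$-H\"older regularity as $\dot x$, we have $u\in B^{1/(s-1)}_{2,\infty}(0;1)$; equivalently, $\omega_2(h,u)\le c\,|h|^{1/(s-1)}$ for all sufficiently small $|h|$. We also have $u\in L_\infty(0;1)$ with $\|u\|_\infty=l(x)$, since $|u(t)|_g\equiv l(x)$.

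First I would extend $u$ to a function $\tilde u\colon\R\to\R^k$ --- for instance, by reflecting across each endpoint, extending periodically, and multiplying by a fixed smooth cut-off --- so that $\tilde u\in L_\infty(\R)\cap B^{1/(s-1)}_{2,\infty}(\R)$ with $\|\tilde u\|_\infty$ and the $L_2$-modulus of continuity of $\tilde u$ on $\R$ controlled by $\|u\|_\infty$ and $c_2^{1/(s-1)}(u)$, respectively. Then I would fix a mollifier $\rho\in C_c^\infty(\R)$ with $\int\rho=1$ and $\operatorname{supp}\rho\subset[-1;1]$, set $\rho_h(t)=h^{-1}\rho(t/h)$, and define $w=w_h=\tilde u*\rho_h\in C^\infty(\R)$.

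Two standard estimates will then follow. Minkowski's integral inequality applied to the identity $w(t)-\tilde u(t)=\int(\tilde u(t-hr)-\tilde u(t))\rho(r)\,dr$ yields
\[
	\|w-u\|_{L_2(0;1)}\le\|w-\tilde u\|_{L_2(\R)}\le\|\rho\|_{L_1}\cdot c\,h^{1/(s-1)}=C_1\,h^{1/(s-1)},
\]
while Young's inequality applied to $\dot w=\tilde u*\dot\rho_h$ gives
\[
	\|\dot w\|_{L_\infty(\R)}\le\|\tilde u\|_\infty\cdot\|\dot\rho_h\|_{L_1(\R)}=\|\tilde u\|_\infty\cdot h^{-1}\|\rho'\|_{L_1}=C_2\,h^{-1}.
\]
For $\varepsilon\in(0;\varepsilon_0]$ below a fixed threshold $\varepsilon_0$, I would pick $h=\varepsilon^{s-1}$: then $\|w-u\|_{L_2(0;1)}\le C_1\varepsilon$ and $\varepsilon^\gamma\|\dot w\|_\infty\le C_2\varepsilon^{\gamma-(s-1)}$, which is uniformly bounded on $(0;\varepsilon_0]$ precisely because $\gamma>s-1$. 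For $\varepsilon>\varepsilon_0$ one simply takes $w\equiv 0$ and uses $\|u\|_{L_2(0;1)}\le\|u\|_\infty=l(x)$.

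There is essentially no deep obstacle here: the only technical point is the extension step, which must simultaneously preserve the $L_\infty$ bound and the $L_2$-Besov modulus. This is routine because $\alpha=1/(s-1)\le 1$, so a reflection-plus-cut-off extension has the required properties; the rest is a textbook mollification calculation, and the final constant depends on $x$ only through $\|u\|_\infty$ and $c_2^{1/(s-1)}(u)$.
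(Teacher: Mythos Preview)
Your mollification argument is correct and in fact slightly sharper than what the paper proves: with $h=\varepsilon^{s-1}$ you get $\varepsilon^{\gamma}\|\dot w\|_\infty\le C_2\varepsilon^{\gamma-(s-1)}$, which stays bounded even at the endpoint $\gamma=s-1$. The extension step is harmless for $\alpha=1/(s-1)\in(0,1]$, and the two mollification estimates are standard, so the proof goes through as written.

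The paper takes a different, more abstract route. Rather than mollifying by hand, it shows via a chain of Besov-space embeddings and a diagonal interpolation identity from \cite{Triebel} that $u\in(L_2(0;1),W^1_\infty(0;1))_{\alpha,\infty}$ for every $\alpha<1/s$, and then reads off the approximants directly from the definition of the $K$-functional. This approach loses a little in the exponent (it only reaches $\alpha<1/s$, hence needs the strict inequality $\gamma>s-1$), because the inclusions $L_2\supset B^\delta_{2,2}$ and $W^1_\infty\supset B^{1+1/r}_{r,r}$ eat some regularity. What it buys is consistency with the interpolation-theoretic framework used throughout the article, and it makes transparent that the approximation statement is nothing more than membership in a real interpolation space. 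Your hands-on computation, by contrast, is self-contained and avoids the machinery of \cite{Triebel} entirely, which is arguably preferable for this particular corollary.
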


\begin{proof}
	This approximation is obtained by the following chain of inclusions. We first fix $0<\alpha<1$. Then for $\delta>0$ and $2\le r<\infty$, we have
	\[
		(L_2(0;1),W^1_\infty(0;1))_{\alpha,\infty} \supset
		(B^\delta_{2,2}(0;1),B^{1+\frac1r}_{r,r}(0;1))_{\tilde\alpha,p}=
		B^{(1-\tilde\alpha)\delta + \tilde\alpha(1+\frac1r)}_{p,p}(0;1)\supset
		B^{(2-\tilde\alpha)\delta + \tilde\alpha(1+\frac1r)}_{p,\infty}(0;1).
	\]
	The first inclusion is fulfilled for $\alpha<\tilde\alpha$ and any $1\le p\le\infty$ (see \cite[1.3.3]{Triebel}). The second equality is fulfilled for $\frac1p=\frac{1-\tilde\alpha}2+\frac{\tilde\alpha}{r}$ according to \cite[4.3.1]{Triebel} (and so $p\ge 2$ automatically) --- this is so called diagonal case of interpolation of Besov spaces. The last embedding is of Sobolev type~\cite[4.6.2]{Triebel} since $\delta>0$. Now we apply the main Theorem~\ref{thm:main}: the control $u$ belongs to the last space if
	\[
		(2-\tilde\alpha)\delta + \tilde\alpha\left(1+\frac1r\right)<\frac2{p(s-1)}=
		\frac{2}{s-1}\left(\frac{1-\tilde\alpha}2+\frac{\tilde\alpha}{r}\right)
	\]
	or, equivalently,    	
	\[
		\tilde\alpha\left(
		1+\frac{1}{s-1}+\frac1r-\delta +\frac{2}{r(s-1)}
		\right) <
		\frac{1}{s-1}-2\delta.
	\]
	For $\delta\to +0$ and $r\to\infty$, the factor in the left-hand side tends to $1+\frac{1}{s-1}$ and the right-hand side tends to $\frac{1}{s-1}$. Therefore, this inequality is compatible with $\tilde\alpha>\alpha$ if and only if $\alpha(1+\frac1{s-1})<\frac1{s-1}$ or $\alpha<\frac1s$. 
	
	Under this choice of $\alpha$, one obtains  $u\in (L_2(0;1),W^1_\infty(0;1))_{\alpha,\infty}$. It means that the $K$-functional of the interpolation~\cite[1.3]{Triebel} satisfies the following estimate:
	\[
		\sup_{M>0}(M^{-\alpha}K(M,u)) = 
		\sup_{M>0}\inf_{w\in W^1_\infty(0;1)} (M^{-\alpha}\|w-u\|_2 + M^{1-\alpha}\|w\|_{W^1_\infty})=C<\infty.
	\]
	Space $C^\infty(\R)\cap W^1_p(0;1)$ is dense in $W^1_p(0;1)$ (see\cite[2.3.2]{Triebel}). Let us denote $M^\alpha=\varepsilon$. Then for any $\varepsilon>0$, there exists a function $w\in C^\infty(\R)$ such that $\|w-u\|_{L_2(0;1)} \le 2C \varepsilon$ and $\|\dot w\|_{L_\infty(0;1)}\le 2C/\varepsilon^{(1-\alpha)/\alpha}$.  
\end{proof}

Theorem \ref{thm:main} also implies an estimate on constant $c_p^\alpha(u)$ in $L_p$-H\"older condition (see Definition~\ref{defn:Holder}), which is similar to the Poincar\'e inequality

\begin{corollary}[an analog of the Poincar\'e inequality]
	\label{cor:Poincare}
	For any $p$ and $\alpha$ from item \cw of Theorem \ref{thm:main}, there exists a constant $C>0$ such that for any geodesic (or $W^1_1$-local minimum) $x(t)$ lying in $\K$ with control $|u(t)|_g\equiv l(x)$, one has
	\[
		c^\alpha_p(u) \ge C\|u-\bar u\|_p
	\]
	where $\bar u=\int_0^1u(t)\,dt\in\R^k$ is the mean value of the control $u(t)$ on the interval $t\in[0;1]$.
\end{corollary}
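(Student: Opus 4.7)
The plan is to recognize the corollary as a Poincar\'e-type statement on the Nikolski--Besov space $B^\alpha_{p,\infty}(0,1)$, and to reduce it to a universal fact that makes no reference to the sub-Riemannian geometry. The role of item \cw of Theorem~\ref{thm:main} (together with the paragraph preceding the Fourier-coefficient corollary, which expresses $u$ as a smooth function of $\dot x$) is precisely to place the control $u$ into $B^\alpha_{p,\infty}(0,1)$ in the first place, so that $c^\alpha_p(u)<\infty$ and both sides of the claimed estimate are finite. Concretely, I would prove: there exists $C=C(\alpha,p,\delta)>0$ such that
\[
	\|v-\bar v\|_p \le C\,c^\alpha_p(v) \qquad\text{for every } v\in B^\alpha_{p,\infty}(0,1),
\]
and then apply this with $v=u$.

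For the universal inequality I would argue by compactness and contradiction. If it fails, choose $v_n\in B^\alpha_{p,\infty}(0,1)$ with $\|v_n-\bar v_n\|_p>n\,c^\alpha_p(v_n)$ and, replacing $v_n$ by $(v_n-\bar v_n)/\|v_n-\bar v_n\|_p$ (a legitimate rescaling, because $c^\alpha_p$ is unchanged under adding a constant and is positively homogeneous), normalize so that $\bar v_n=0$, $\|v_n\|_p=1$, and $c^\alpha_p(v_n)<1/n$. The sequence is then bounded in $B^\alpha_{p,\infty}(0,1)$, so by the compact embedding $B^\alpha_{p,\infty}(0,1)\hookrightarrow L_p(0,1)$ that I would invoke from \cite{Triebel}, a subsequence converges in $L_p$ to some $v$ with $\|v\|_p=1$ and $\bar v=0$. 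For every fixed $h$ the map $w\mapsto\|\sigma_hw-w\|_{L_p(I_+)}$ is continuous on $L_p$, hence $c^\alpha_p$ is lower semicontinuous as a supremum of continuous functionals, giving $c^\alpha_p(v)=0$. But $c^\alpha_p(v)=0$ forces $v(t+h)=v(t)$ a.e.\ for every $0<h\le\delta$, so $v$ is constant a.e., and together with $\bar v=0$ this yields $v\equiv 0$, contradicting $\|v\|_p=1$.

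I expect the only genuinely non-trivial ingredient to be the compactness of the embedding $B^\alpha_{p,\infty}(0,1)\hookrightarrow L_p(0,1)$, which I would treat as a black box from Triebel's treatise. Once it is in hand, the remainder of the argument is entirely soft and produces a constant $C$ depending only on $\alpha$, $p$, and $\delta$; in particular, $C$ is uniform over all $W^1_1$-local minima $x(\cdot)\subset\K$ with $|\dot x|_g\equiv\const$ admissible in item \cw, which is exactly what the corollary asserts.
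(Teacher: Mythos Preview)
Your proposal is correct and follows essentially the same compactness-and-contradiction scheme as the paper's own proof: normalize a violating sequence in $B^\alpha_{p,\infty}(0,1)$, use the compact embedding into $L_p(0,1)$ to pass to a limit, and reach a contradiction because the limit has $c^\alpha_p=0$ yet unit $L_p$-norm and zero mean. The only cosmetic difference is that the paper obtains lower semicontinuity of the Besov seminorm via the dual representation $B^\alpha_{p,\infty}=(\tilde B^{-\alpha}_{p^*,1})'$ and Banach--Alaoglu, whereas you argue it directly on $L_p$ as a supremum of continuous functionals---your route is slightly more elementary but otherwise identical.
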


In particular, if the system of fields $f_j$ is orthonormal, then for $p=2$ and any $0<\alpha<\frac{1}{s-1}$, there exists $C>0$ such that
\[
	c^\alpha_2(u) \ge C\sqrt{l(x)^2 -\bar u^2}
\]
since, in this case,
\[
	\|u-\bar u\|_2^2 = \int_0^1 (u(t)-\bar u)^2\,dt = 
	\int_0^1 (u^2(t)-2\bar uu(t) + \bar u^2)\,dt =
	l(x)^2-\bar u^2.
\]
The proof of Corollary \ref{cor:Poincare} is similar to the proof of the classic Poincar\'e inequality. Let us give it shortly.

\begin{proof}[Proof of Corollary \ref{cor:Poincare}]
	
	Let $Y\subset B^\alpha_{p,\infty}(0;1)$ consists of (not necessarily optimal) controls with zero average: $Y=\{u:\bar u=0\}$. First, we prove the inequality for $u\in Y$. Suppose the opposite: for any constant $C>0$ there exists $u\in Y$ such that $c^\alpha_p(u)\le C\|u\|_p$. In this case, there exists a sequence $u^m\in Y$ such that $c^\alpha_p(u^m)/\|u^m\|_p\to0$ as $m\to\infty$. Without loss of generality, we assume that $c^{\alpha}_p(u^m)+\|u^m\|_p=1$ for all $m$. 
	
	The space $B^\alpha_{p,\infty}(0;1)$ can be represented as a dual space
	$B^\alpha_{p,\infty}(0;1)=(\tilde B^{-\alpha}_{p^*,1}(0;1))'$ where $\frac1p+\frac1{p^*}=1$ (see \cite[4.8.1]{Triebel}). Therefore, due to Banach--Alaoglu theorem, it is possible to choose a weak$^*$-convergent subsequence of the sequence $u^m$. We keep indices $m$ for the subsequence without loss of generality. Space $B^\alpha_{p,\infty}(0;1)$ is compactly embedded into the space $L_p(0;1)$ according to the Rellich-Kondrachev theorem (see the proof of Theorem~\ref{thm:compact} for details). Hence, there is a subsequence that converges strongly in $L_p(0;1)$. We also keep indices $m$ for it.
	
	Thus, we obtain a sequence $u^m$ that converges to $u$ strongly in $L_p$ and weakly$^*$ in $B^\alpha_{p,\infty}(0;1)$ as $m\to\infty$. It is evident that $u\in Y$.
	
	Recall that $c^\alpha_p(u^m)/\|u^m\|_p\to0$ as $m\to\infty$. Therefore, $\|u^m\|_p\le 1$ implies $c^\alpha_p(u^m)\to 0$. So $\|u^m\|_p\to 1$ and $\|u\|_p=1$. Moreover, $u$ is the weak$^*$-limit of $u^m$, which implies
	\[
		\|u\|_p + c^\alpha_p(u) = \|u\|_{B^\alpha_{p,\infty}}\le\limsup_{m\to\infty} \|u^m\|_{B^\alpha_{p,\infty}} = 1.
	\]
	So $c^\alpha_p(u)=0$ and the function $u$ is constant, which is impossible since $\bar u=0$ and $\|u\|_p=1$. Thus, the statement is proved for any $u\in Y$ by contradiction.
	
	For an arbitrary function $u\in B^\alpha_{p,\infty}(0;1)$, one has $u-\bar u\in Y$ and $c^\alpha_p(u-\bar u) = c^\alpha_p(u)$.	It remains to note that $x(t)$ is a geodesic or $W^1_1$-local minimum. Hence, its control belongs to space $B^\alpha_{p,\infty}(0;1)$ by Theorem~\ref{thm:main}.	
	
\end{proof}

Theorem \ref{thm:main} also gives us a possibility to prove a reinforced version of the Agrachev result on compactness of the set of shortest paths \cite{AgrachevCompact}. Denote the set of all shortest paths lying in $\K$ by
\[
	\mathfrak{X}\subset\left\{
	(x^0,x^1,u)\in \K\times \K\times L_\infty(0;1)
	\right\}.
\]
Here we construct triplets $(x^0,x^1,u)$ in the following way. We take an arbitrary shortest path $x(t)\in W^1_\infty(0;1)$ lying in $\K$ that has a constant velocity $|\dot x(t)|_g\equiv\const$. Then $x^0=x(0)$ and $x^1=x(1)$ are the initial and final points of $x(t)$, and $u(t)$ is its control. So $\mathfrak{X}$ consists of all triplets $(x^0,x^1,u)$ corresponding to all shortest paths $x(\cdot)$ in $\K$.

\begin{theorem}[on compactness of the set of shortest paths in the space of the Bessel potentials]
	\label{thm:compact}
	
	Let $\K\subset M$ be a compact set and $f_j$ form an orthonormal frame.	Then $\mathfrak{X}$ is a compact subset in $\K\times \K\times H^\beta_p(0;1)$ where $H^\beta_p(0;1)$ is the space of Bessel potentials\footnote{The assertion of the theorem remains true if one exchange the set $H^\beta_p(0;1)$ by the Besov space $B^\beta_{p,q}(0;1)$ $1\le q\le\infty$ or by the Sobolev--Slobodeckij space $W^\beta_p(0;1)$ due to the continuity of the Sobolev embeddings $H^{\beta+\varepsilon}_p(0;1)\subset B^\beta_{p,q}(0;1)$ and $H^{\beta+\varepsilon}_p(0;1)\subset W^\beta_p(0;1)$ for any $\varepsilon>0$ (see \cite[4.6.2]{Triebel}).} and $p$ and $\beta$ are from item \cg of Theorem	\ref{thm:main} or $1\le p<2+\frac1{s-1}$, $\beta<\frac1{2s-1}$. 
	
\end{theorem}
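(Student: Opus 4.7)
The plan is to take an arbitrary sequence $(x^0_m,x^1_m,u_m)_{m\ge 1}\subset\mathfrak{X}$, extract a subsequence converging in $\K\times\K\times H^\beta_p(0;1)$, and verify that its limit again lies in $\mathfrak{X}$. Compactness of $\K$ immediately yields convergent subsequences $x^0_m\to x^0$ and $x^1_m\to x^1$, and the Chow--Rashevsky theorem guarantees continuity of the sub-Riemannian distance on $\K\times\K$, so the constant speeds $l(x_m)=d(x^0_m,x^1_m)$ are uniformly bounded. Everything then hinges on the controls.

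The first step is to apply Theorem~\ref{thm:main}\,\cg, together with the transfer from $\dot x$ to the control $u$ noted in Section~\ref{sec:corollaries} (valid here since $f_j$ is an orthonormal frame), to obtain a uniform bound $\|u_m\|_{B^{\beta'}_{p',\infty}(0;1)}\le C$ with parameters $(\beta',p')$ taken just inside the admissible range of \cg and slightly above the target $(\beta,p)$. In the first regime $p\ge 2+\frac{1}{s-1}$ I would simply pick $p'=p$ and $\beta<\beta'<\frac{1}{p(s-1)}$; in the second regime $1\le p<2+\frac{1}{s-1}$, $\beta<\frac{1}{2s-1}$, I would set $p'=2+\frac{1}{s-1}$ and choose $\beta<\beta'<\frac{1}{2s-1}$, then use the elementary embedding $B^{\beta'}_{p',\infty}(0;1)\hookrightarrow B^{\beta'}_{p,\infty}(0;1)$, valid for $p\le p'$ on a bounded interval.

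Next I would invoke the compact embedding $B^{\beta'}_{p,\infty}(0;1)\hookrightarrow H^\beta_p(0;1)$, available whenever $\beta'>\beta$: interpose an intermediate $B^{\beta_0}_{p,1}(0;1)$ with $\beta<\beta_0<\beta'$, use the continuous inclusion $B^{\beta_0}_{p,1}\hookrightarrow F^\beta_{p,2}=H^\beta_p$ between the Besov and Triebel--Lizorkin scales (\cite[4.6.1--4.6.2]{Triebel}), and the Rellich--Kondrachev-type compactness of $B^{\beta'}_{p,\infty}\hookrightarrow B^{\beta_0}_{p,1}$ on a bounded interval. A diagonal extraction then produces a subsequence $u_m\to u$ strongly in $H^\beta_p(0;1)$, hence also in $L_p(0;1)$.

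It finally remains to verify $(x^0,x^1,u)\in\mathfrak{X}$. The $L_p$-convergence of controls, the uniform $L_\infty$ bound $\|u_m\|_\infty=l(x_m)$, and $x^0_m\to x^0$ produce, via Gronwall applied to $\dot x_m=\sum_j u_{m,j}f_j(x_m)$, uniform convergence $x_m\to x$, where $x$ is horizontal with control $u$ joining $x^0$ to $x^1$. Lower semicontinuity of length then gives
\[
l(x)\le\liminf_m l(x_m)=\lim_m d(x^0_m,x^1_m)=d(x^0,x^1)\le l(x),
\]
so $x$ is a shortest path. The main obstacle, I expect, is pinning down the compact embedding with exactly the numerology announced, especially in the low-$p$ regime where one must squeeze $\beta'$ strictly between $\beta$ and $\frac{1}{2s-1}$ while keeping $p'$ admissible for \cg; by contrast, the closedness of the shortest-path property under the ensuing convergence is a Gronwall plus lower-semicontinuity argument standard since Agrachev's compactness result \cite{AgrachevCompact}.
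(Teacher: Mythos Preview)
Your precompactness argument essentially matches the paper's: both squeeze in intermediate Besov spaces between the uniform bound from Theorem~\ref{thm:main}\cg and the target $H^\beta_p$, then invoke a Rellich--Kondrachev-type compact embedding. The paper routes through $W^{\beta_j}_p=B^{\beta_j}_{p,p}$ and the interpolation identity $(L_p,W^1_p)_{\beta_j,p}=W^{\beta_j}_p$, while you go through $B^{\beta_0}_{p,1}\hookrightarrow F^\beta_{p,2}=H^\beta_p$; both are valid paths through the Besov/Triebel--Lizorkin scale. Your handling of the two regimes for $(p,\beta)$ is also the same as the paper's reduction.

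Where you diverge is closedness. You run the classical sequential argument (Gr\"onwall for continuous dependence on controls, then lower semicontinuity of length), while the paper instead characterizes $\mathfrak{X}$ as the coincidence set $\{e=i\}$ of two maps---an endpoint-and-$L_p$-norm map $e$ and an identity-and-sub-Riemannian-distance map $i$---and shows this set is weakly closed in $\K\times\K\times L_p$. The paper's route is cleaner in that it simultaneously encodes the shortest-path condition \emph{and} the constant-speed condition (via the equality case $\|u\|_1=\|u\|_p$ for $p>1$), and it also explains why the fields are first made complete (so that $e$ is globally defined). Your route is more hands-on and equally valid, but you have one genuine omission: membership in $\mathfrak{X}$ requires $|u(t)|\equiv l(x)$ a.e., not merely that $x$ is a shortest path. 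This is easy to fill---$|u_m|\equiv l(x_m)\to l(x)$ and, along a further subsequence, $u_m\to u$ a.e., hence $|u|\equiv l(x)$---but it should be stated. You should also note (or bypass via Arzel\`a--Ascoli, as is implicit in your uniform-convergence claim) that the limit ODE has a solution on all of $[0;1]$; the paper handles this by the complete-fields reduction.
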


\begin{proof}
	
	Let us note	that one can assume that fields $f_j$ are complete without loss of generality. Indeed, in the opposite case, we proceed as follows. Denote by $V$ an open neighborhood of $\K$ such that its closure is compact. Take a smooth function $\psi$ such that $\psi(x)=1$ for $x\in\K$, $\psi(x)=0$ for $x\not\in V$ and $0\le\psi(x)\le 1$ for all $x$. We exchange fields $f_j$ by that of $\tilde f_j=\psi f_j$ and exchange $g$ appropriately. Under such a change (i) all fields become complete; (ii) the rank becomes nonconstant, but remains constant in a neighborhood of $\K$ (which is obviously enough for Theorem~\ref{thm:main}); (iii) horizontal curves in $\K$ remain the horizontal ones; (iv) a part of horizontal curves lying out of $\K$ cease to be horizontal; and (v) no new horizontal curves arise. Moreover, lengths of horizontal curves in $\K$ remain unchanged, as for the rest, their lengths may only increase. The estimate in item \cg of Theorem~\ref{thm:main} does not altered since the shortest curves in $\K$ do not altered.	
	
	Without loss of generality, assume that $p$ and $\beta$ are from from item \cg of Theorem~\ref{thm:main}. Indeed, the statement for $p<2+\frac1{s-1}$ and $\beta<\frac1{2s-1}$ follows from the continuity of the inclusion $B^\beta_{2+\frac1{s-1},\infty}(0;1)\subset B^\beta_{p,\infty}(0;1)$.
	
	At first, let us show that the set of all controls on shortest paths lying in $\K$ belongs to $H^\beta_p(0;1)$ and forms a precompact set in it. The embedding of $W^1_p(0;1)$ into $L_p(0;1)$ is compact by the Rellich--Kondrachev theorem \cite[5.7]{Evance}. Take some numbers $\beta_{j}$, $j=1,2$, such that $\beta<\beta_1<\beta_2<\frac{1}{p(s-1)}$. Then the Sobolev--Slobodeckij spaces $W^{\beta_j}_p(0;1)=B^{\beta_j}_{p,p}(0;1)$ (see \cite[2.4.1]{Triebel}) can be represented as the following interpolational ones (see \cite[4.3.1]{Triebel}):
	\[
		W^{\beta_j}_p(0;1) = (L_p(0;1),W^1_p(0;1))_{\beta_j,\,p}.
	\]
	Since $W^1_p(0;1)\Subset L_p(0;1)$, we have $W^{\beta_2}_p(0;1)\Subset W^{\beta_1}_p(0;1)$ (see~\cite[1.16.4]{Triebel}). The set of controls on shortest paths lying in $\K$ is bounded in $W^{\beta_2}_p(0;1)$ by Theorem~\ref{thm:main} (since $B^{\beta_2+\varepsilon}_{p,\infty}(0;1)\subset B^{\beta_2}_{p,p}(0;1)$ for any $\varepsilon>0$, \cite[4.6.2]{Triebel}) and thus it is precompact in~$W^{\beta_1}_p(0;1)$. Now we recall that $W^{\beta_1}_p(0;1)\subset H^\beta_p(0;1)$ by the corresponding  embedding of Sobolev type (see \cite[4.6.2]{Triebel}). Thus, the set of controls on shortest paths in $\K$ is precompact in $H^\beta_p(0;1)$.
	
	So, the set $\mathfrak{X}$ is precompact in $\K\times \K\times H^\beta_p(0;1)$. In order to show that it is compact, let us consider two maps
	\[
		\begin{gathered}
			e,i:\K\times \K\times L_p(0;1)\to \K\times \K\times \R,\\
			e(x^0,x^1,u) = (\tilde x^0,\tilde x^1,\|u(t)\|_p),\\
			i(x^0,x^1,u) = (x^0,x^1,d_{SR}(x^0,x^1)).
		\end{gathered}
	\]
	Here $d_{SR}(x^0,x^1)$ is the sub-Riemannian distant between $x^0$ and $x^1$; $\tilde x^1$ is the value of the unique solution to the differential equation $\dot x=\sum_j u_jf_j(x)$ with the initial condition $x(0)=x^0$ at $t=1$; and $\tilde x^0$ is the value of the unique solution to the same equation, but with the initial condition $x(1)=x^1$ and at~$t=0$.
	
	We claim that since $p>1$ (in fact, $p>2+\frac{1}{s-1}$), we have
	\[
		\mathfrak{X} = \big\{(x^0,x^1,u)\ \Big|\ e(x^0,x^1,u)=i(x^0,x^1,u)\Big\}.
	\]
	Indeed, inclusion $\subset$ is evident and inclusion $\supset$ is also not so difficult to obtain. If the first two components of $e$ and $i$ coincide, then control $u$ leads from $x^0$ to $x^1$. If also the last components of $e$ and $i$ coincide, then length of any horizontal curve $x(t)$ having $u(t)$ as a control (that is $\|u(t)\|_1$) is not greater than $d_{SR}(x^0,x^1)$ (since $\|u(t)\|_1\le \|u(t)\|_p=d_{SR}(x^0,x^1)$). In addition, $d_{SR}(x^0,x^1)\le \|u(t)\|_1$ according to the definition of the sub-Riemannian distance. So the equality can happen only if $|u(t)|\equiv d_{SR}(x^0,x^1)$ for a.e.~$t$ as $p>1$.	
	
	Map $i$ is defined for all $(x^0,x^1,u)$ and it is continuous with respect to any topology on $L_p(0;1)$, and map $e$ is defined for all $(x^0,x^1,u)$ since fields $f_j$ are complete and $e$ is continuous with respect to the weak topology on $L_p(0;1)$. Therefore, set $\mathfrak{X}$ (as the set of points where $e=i$) is weakly closed in the $\K\times \K\times L_p(0;1)$, and so it is strongly closed. Hence, set $\mathfrak{X}$ is closed in $\K\times\K\times H^\beta_p(0;1)$ due to continuity of the embedding $H^\beta_p(0;1)\subset L_p(0;1)$ of Sobolev type.

\end{proof}

The previous theorem is formulated in terms of controls. But it can be easily reformulated it terms of the shortest paths.

\begin{corollary}
	
	Let $\K\subset M$ be a compact set. Then the set of shortest paths $x(t)$ in $\K$ with $|\dot x|_g\equiv l(x)$ is compact in $H^{1+\beta}_p(0;1)$ where $p$ and $\beta$ are from item \cg of Theorem	\ref{thm:main} or $1\le p<2+\frac1{s-1}$ and $\beta<\frac1{2s-1}$.
	
\end{corollary}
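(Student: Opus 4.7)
My plan is to deduce the corollary directly from Theorem~\ref{thm:compact} by lifting compactness from the level of controls to the level of curves. Any triple $(x^0,x^1,u)\in\mathfrak{X}$ determines a unique horizontal curve $x(\cdot)$ as the solution of the Cauchy problem $\dot x=\sum_ju_jf_j(x)$, $x(0)=x^0$, and this curve is exactly the shortest path that produced the triple; so the set in question is the image of $\mathfrak{X}$ under the map $\Phi\colon(x^0,x^1,u)\mapsto x(\cdot)$. It suffices to show that $\Phi$ takes values in $H^{1+\beta}_p(0;1)$ (after fixing a smooth embedding of a neighborhood of $\K$ into some $\R^N$) and is continuous when $\mathfrak{X}$ carries the topology inherited from $\K\times\K\times H^\beta_p(0;1)$ of Theorem~\ref{thm:compact}. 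The case $1\le p<2+\tfrac{1}{s-1}$ reduces to item \cg by the continuous embedding $B^\beta_{2+\frac1{s-1},\infty}(0;1)\subset B^\beta_{p,\infty}(0;1)$, so I may assume $(p,\beta)$ come from \cg.

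The first step is to show that $\dot x\in H^\beta_p(0;1)$ with a uniform norm bound. From $\dot x(t)=\sum_ju_j(t)f_j(x(t))$ and $|\dot x|_g\equiv l(x)$ uniformly bounded on $\K$, the curve $x$ is Lipschitz with a constant depending only on $\K$; hence each composition $f_j\circ x$ lies in $W^1_\infty(0;1)$ with a uniformly bounded norm. I would then invoke the standard multiplier estimate
\[
    \|fg\|_{H^\beta_p(0;1)}\le C\bigl(\|f\|_{H^\beta_p}\|g\|_{L_\infty}+\|f\|_{L_p}\|g\|_{W^1_\infty}\bigr),\qquad 0<\beta<1,
\]
provable via the Slobodeckij description $H^\beta_p=W^\beta_p=B^\beta_{p,p}$ and the identity $g(t)-g(s)=\int_s^tg'$, applied with $f=u_j$ and $g=f_j\circ x$. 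This gives $\dot x\in H^\beta_p(0;1)$, and combined with boundedness of $x$ itself in $L_p$ yields $x\in H^{1+\beta}_p(0;1)$ with norm controlled by $\|u\|_{H^\beta_p}$ and the geometric data on $\K$. Note that $\beta<1$ holds automatically in both regimes considered in the corollary.

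For continuity of $\Phi$, suppose $x^{0,n}\to x^0$ in $\K$ and $u^n\to u$ in $H^\beta_p(0;1)$. By a Gronwall argument applied to $\dot x^n=\sum u_j^nf_j(x^n)$ with right-hand sides uniformly bounded in $L_\infty$ and convergent in $L_1$, the curves $x^n$ converge to $x$ uniformly, so $f_j\circ x^n\to f_j\circ x$ in $L_\infty(0;1)$ while remaining uniformly bounded in $W^1_\infty(0;1)$. Interpolation between these two facts yields convergence in every intermediate H\"older norm $C^\gamma$, $\gamma<1$, which is strong enough to act as a multiplier in $H^\beta_p$. Splitting $u^nf_j(x^n)-uf_j(x)=(u^n-u)f_j(x^n)+u(f_j(x^n)-f_j(x))$ and applying the multiplier estimate to each piece shows $\dot x^n\to\dot x$ in $H^\beta_p(0;1)$, hence $x^n\to x$ in $H^{1+\beta}_p(0;1)$. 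Consequently $\Phi(\mathfrak{X})$ is the continuous image of a compact set and is itself compact. The main technical point is the multiplier estimate together with propagating convergence through the composition $f_j\circ x^n$; neither is difficult, but together they are the essential bridge from $H^\beta_p$-compactness of controls to $H^{1+\beta}_p$-compactness of the curves.
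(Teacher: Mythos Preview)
Your argument follows exactly the route taken in the paper: define the solution map $\Phi:(x^0,x^1,u)\mapsto x(\cdot)$, note that its image is the set of shortest paths, show the image lands in $H^{1+\beta}_p(0;1)$, prove continuity of $\Phi$ from $\K\times\K\times H^\beta_p(0;1)$ to $H^{1+\beta}_p(0;1)$, and conclude by the fact that continuous images of compacta are compact. The paper's proof is extremely terse and simply asserts the two nontrivial steps (image in $H^{1+\beta}_p$ and continuity); you supply the missing details via a Lipschitz-multiplier estimate and a Gronwall argument, which is precisely what one must do to make the paper's sketch rigorous.

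One small technical slip: you write $H^\beta_p=W^\beta_p=B^\beta_{p,p}$ to justify the multiplier bound, but for $p\ne2$ the Bessel potential space $H^\beta_p=F^\beta_{p,2}$ differs from the Sobolev--Slobodeckij space $W^\beta_p=B^\beta_{p,p}$. The multiplier estimate $\|fg\|_{H^\beta_p}\le C(\|f\|_{H^\beta_p}\|g\|_{L_\infty}+\|f\|_{L_p}\|g\|_{W^1_\infty})$ is nonetheless true; a clean justification is complex interpolation $H^\beta_p=[L_p,W^1_p]_\beta$ together with the obvious fact that $W^1_\infty$ functions act boundedly on both endpoints. With that correction your proof is complete and matches the paper's.
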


\begin{proof}
	
	Indeed, consider the following map 
	\[
		\mathfrak{X} \to W^1_\infty(0;1); \qquad (x^0,x^1,u)\mapsto x(\cdot)
	\]
	where $x(t)$ is the unique solution to the equation $\dot x=\sum u_jf_j(x)$ with the initial condition  $x(0)=x^0$ (we simply forget about point $x^1$). This map is surjective on the set of shortest paths in $\K$ by the definition of $\mathfrak{X}$. Since $\mathfrak{X}\subset H^\beta_p(0;1)$ by Theorem~\ref{thm:compact}, the image of $\mathfrak{X}$ belongs to $H^{1+\beta}_p(0;1)$. Moreover, the map is continuous as a map from $\K\times \K\times H^\beta_p(0;1)$ to $H^{1+\beta}_p(0;1)$. It remain to note that the continuous image of any compact set (e.g.\ $\mathfrak{X}$) is also compact.
	
\end{proof}

\section{Proof of Theorem~\ref{thm:main}}
\label{sec:proof}

Proofs of the both items \cw and \cg in Theorem~\ref{thm:main} are very close, so we write a united proof for both of them and make special notes in places where they are different. For convenience of writing the united proof, in the case \cw, we denote by $L$ the given curve length, $L=l(x)$; and in the case \cg, lengths of shortest paths in $\K$ may differ, but all of them are bounded by a constant $L$ (i.e.\  $l(x)\le L$) since $\K$ is compact and sub-Riemannian distance is continuous. In other words, $L$ is a constant such that $l(x)\le L$ in both cases \cw and \cg.

\bigskip


\textit{Outline of the proof:} The proof of Theorem~\ref{thm:main} is divided into five main parts. Firstly, in section~\ref{subsec:reduction}, we present a precise reduction of the subriemannian problem to an optimal control one. This reduction is technical and somewhat standard. Readers interested solely in the optimal control aspect may skip this section. 

Secondly, in section~\ref{subsec:simplifications}, we introduce additional assumptions on the optimal control that can be assumed without loss of generality. These assumptions simplify greatly the rest of the proof.

Thirdly, in section~\ref{subsec:interpolation}, we present the heart of the proof: a variation on an optimal trajectory that allows us to establish a key interpolational estimate on the optimal control.

Fourthly, in section~\ref{subsec:duality}, we demonstrate that the key interpolation estimate is dual to the problem of approximating optimal controls by smooth functions.

Finally, in section~\ref{sec:interpolation}, we use the approximation to obtain estimates on the $K$-functional, which leads to an interpolation of Besov spaces.

\subsection{Reduction to an optimal control problem}
\label{subsec:reduction}

Before we start the proof of Theorem~\ref{thm:main}, let us reduce it to the simplest possible form. We begin with reducing the general problem to the optimal control one. So we want to prove that, without loss of generality, one can assume that there exist vector fields $f_1,\ldots f_k\in \mathrm{Vect}(M)$ such that

\medskip

\begin{equation}
\label{oc:f_i}
\tag{$OC_1$}
	 \Delta = \mathrm{span}(f_1,\ldots,f_k);
\end{equation}

\begin{equation}
\label{oc:f_i_const}
\tag{$OC_2$}
	\begin{gathered}
	M\subset\R^n\textit{ and for each }i\textit{ there exist some global coordinates on }M\\
	\textit{ such that } f_i(x)\textit{ is constant in these coordinates};
	\end{gathered}
\end{equation}

\begin{equation}
\label{oc:f_i_orthonormal}
\tag{$OC_3$}
	g[f_i,f_j]=\delta_{ij}.
\end{equation}

\begin{proof}[Proof of~\eqref{oc:f_i}, \eqref{oc:f_i_const} and~\eqref{oc:f_i_orthonormal}.] 
	
	Obviously, any $x\in M$ has a neighborhood where vector fields satisfying~\eqref{oc:f_i} can be easily found. Moreover, vector fields $f_i$ are linearly independent since there are exactly $k=\dim\Delta$ of them. In particular, $f_i\ne 0$. Hence, each point $x\in M$ has a neighborhood diffeomorphic to a ball in $\R^n$ such that each vector field $f_i$ is constant in some local coordinates on that neighborhood. Moreover, since $f_i$ are linearly independent, we may assume that $f_i$ are orthonormal by using the standard Gram--Schmidt process.
	
	Thus, any point $x\in M$ has a neighborhood $U^x$ (which is diffeomorphic to a ball in $\R^n$) and defined on $U^x$ vector fields $f_i$ such that~\eqref{oc:f_i}, \eqref{oc:f_i_const}, and~\eqref{oc:f_i_orthonormal} are fulfilled in $\tilde M=U^x$. Let us now show that it is sufficient to prove the theorem assuming $\tilde M=U^x$.
	
	Denote $B=\{\xi\in\R^n:|\xi|<1\}$ and let $\phi^x:B\to U^x$ be the corresponding diffeomorphism with $\phi^x(0)=x$. Let us cover compact $\K$ by a finite number of neighborhoods of the form $\phi^{x_m}(\frac13B)$. Put $\phi^m=\phi^{x_m}$ for short. Further, using this atlas, for any horizontal curve $x(t)$ lying in the union of these neighborhoods, it is possible to construct in standard manner a partition of $[0;1]$ of the from $0=t_0<t_1<\ldots<t_N=1$ in such a way that, for any $0\le j< N$, there exists a (minimal) index $m_j$ such that $x(t_j)\in\phi^{m_j}(\frac13B)$; $x(t)\in \phi^{m_j}(\frac23B)$ for $t\in[t_j;t_{j+1})$; and if $j+1\ne N$, then $x(t_{j+1})\in\phi^{m_j}(\frac23\partial B)$. 
	
	Let us now using this partition construct a number of overlapping intervals that cover $[0;1]$. Denote $s^+_j=\max\{t\in[t_j;t_{j+1}]:x(t)\in \phi^{m_j}(\frac12\partial B)\}$ for $j<N-1$. Note that the motion time from a given point on $\phi^{m}(\frac13\partial B)$ to a given point on $\phi^{m}(\frac12\partial B)$ along any horizontal curve with bounded speed $|\dot x(t)|_g\le L$ is separated from $0$. A similar statement holds for the motion time from $\phi^{m}(\frac12\partial B)$ to $\phi^{m}(\frac23\partial B)$. Hence, there exists a constant $S^+>0$ such that, for any horizontal curve $x(t)$ in $\K$ with $|\dot x|_g\le L$, we have $s^+_j-t_j \ge S^+$ and $t_{j+1}-s_j^+\ge S^+$ for $j<N-1$. For $j<N-1$, let us now put $s^-_j=\max\{t\in[t_j;t_{j+1}]:x(t)\not\in \phi^{m_{j+1}}(\frac12B)\}$ if the written set is not empty, and $s^-_j=t_j$ otherwise. Similarly, there exists a constant $S^->0$ such that $t_{j+1}-s_j^-\ge S^-$ for any horizontal curve in $\K$ with $|\dot x|_g\le L$. Put $t_{j+1/2}=\max\{s_j^+,s_j^-\}$. Obviously,  $x(t)\in \phi^{m_j}(\frac23B)$ for $t\in[t_j;t_{j+1/2}]$ (as $[t_j;t_{j+1/2}]\subset [t_j;t_{j+1})$) and $x(t)\in \phi^{m_{j+1}}(\frac23B)$ for $t\in[t_{j+1/2};t_{j+1}]$ (as $x(t)\in \phi^{m_{j+1}}(\frac12B)$ for $t\in (s_{j}^-;t_{j+1}]$). Put $S=\min\{S^+,S^-\}$. Then $t_{j+1/2}-t_j\ge S$ and $t_{j+1}-t_{j+1/2}\ge S$ for $j<N-1$. In particular, number of points $N$ in the partition never exceeds $1+2/S$ on any horizontal curve $x(t)$ in $\K$ with $|\dot x(t)|_g\le L$.
	
	Consider the following system of overlapping intervals:
	\[
		[t_0;t_1],\ [t_{1/2};t_2],\ [t_{3/2},t_3],\ \ldots,\ [t_{N-3/2},t_{N}].
	\]
	We know that $x(t)\in \phi^{m_j}(\frac23\cl B)$ for $t\in[t_{j-1/2};t_{j+1}]$ and $x(t)\in\phi^{M_1}(\frac23\cl B)$ for $t\in[t_0;t_1]$ by construction. Moreover, each interval length is\footnote{Except for the last one $[t_{N-3/2},t_{N}]$, which length is $S$ or greater.} $2S$ or greater, and length of any two consequent intervals intersection is $S$ or greater. Therefore, $L_p$-H\"older continuity of $\dot x$ on $[0;1]$ with an exponent $\nu$ is equivalent to $L_p$-H\"older continuity of $\dot x$ on each of these intervals with the same exponent $\nu$. Moreover, for any horizontal curve $x(t)\in \K$ with $|\dot x(t)|_g\le L$, the norm of $\dot x$ in $B^\nu_{p,\infty}[0;1]$ is not greater than the sum of its norms in  $B^\nu_{p,\infty}[t_0,t_1]$ and $B^\nu_{p,\infty}[t_{j-1/2},t_{j+1}]$ for $j<N$.
	
	Thus, it is sufficient to prove Theorem~\ref{thm:main} for the finite number of manifolds $M_m=\phi^{x_m}(B)$ (with compacts $\K_m=\K\cap \phi^{m}(\frac23\cl B)$). Conditions~\eqref{oc:f_i} and~\eqref{oc:f_i_const} are fulfilled on any manifold $M_m$. It remains to take into account the renormalization of time $\tau=(t-t_{j-1/2})/(t_{j-1/2}-t_{j+1})$ (or $\tau=(t-t_0)/(t_1-t_0)$), which allows one to assume $[t_{j-1/2};t_{j+1}]=[0;1]$ (or $[t_0;t_1]=[0;1]$). If the interval length is $\sigma$, then this renormalization changes norm $\|u\|_p$ by a factor of $\sigma^{-p}$, and constant $c^\alpha_p(u)$ by a factor of $\sigma^{-p-\alpha}$. Hence, after the renormalization, we obtain a norm that is equivalent to the  norm for the Besov space on $[0;1]$ given in Definition~\ref{defn:Besov_space}, and this is of no importance as length $\sigma$ is bounded from $0$ by $S$ and does not exceed~1.
	
\end{proof}

\subsection{A series of simplifications}
\label{subsec:simplifications}

Assumptions~\eqref{oc:f_i} and~\eqref{oc:f_i_orthonormal} allow us to reformulate the sub-Riemannian problem on shortest paths as an optimal control problem of the following form:
\begin{equation}
\label{problem:sr_optimal_control}
	\begin{gathered}
		l(x)=\int_0^1\sqrt{u_1^2+\ldots+u_k^2}\to\min\\
		\dot x = u_1f_1(x)+\ldots+u_kf_k(x)\\
		x(0)=x^0;\qquad x(1)=x^1.
	\end{gathered}
\end{equation}
Here $x:[0;1]\to M$ is a Lipschitz continuous curve and $u=(u_1,\ldots,u_k)\in L^\infty([0;1]\to\R^k)$ is a bounded control. 

We believe that in the optimal control setting the term ``admissible'' is more appropriate than ``horizontal''. So pair $(x,u)$ is called admissible if it satisfies both ODE and terminal conditions in~\eqref{problem:sr_optimal_control}. All previously introduced notions can also be reformulated in the new setting, e.g.\ a pair $(x,u)$ is called $W^1_1$-local minimum if there exists an $\varepsilon>0$ such that, for any admissible pair $(y,v)$ with $\|u-v\|_1<\varepsilon$, one has $l(x)\le l(y)$.

Note that condition $|\dot x|_g=\const$ in Theorem~\ref{thm:main} is equivalent to the following one
\begin{equation}
\label{eq:u_square_eq_l}
	|\dot x(t)|_g = |u(t)| \equiv l(x)\ne 0
\end{equation}
(we assume $l(x)\ne 0$ since $u\equiv 0$ if $l(x)=0$ and there is nothing to be proved).

\medskip

We claim that to prove Theorem~\ref{thm:main} it is sufficient to prove that the control $u$ is $L_p$-H\"older continuous. Indeed, since $x\in W^1_\infty(0;1)$, $L_p$-H\"older continuity of $u$ easily implies $L_p$-H\"older continuity of $\dot x(t)=\sum_j f_j(x(t))u_j(t)$ (for item \cg, one should also use the fact that fields $f_j$ and their derivatives $\frac{\partial}{\partial x}f_j$ are bounded on the compact $\K$).

Thus, item \cw in Theorem~\ref{thm:main} states that, for any $W_1^1$-local minimum $(x,u)$ in problem~\eqref{problem:sr_optimal_control} with $|\dot x|_g=\const$ and for all $i\le k$, one has $\|u_i\|_{B^{\alpha}_{p,\infty}}<\infty$. Item \cg in the theorem states that there exists a constant $C>0$ such that, for any two given points $x^0,x^1\in \K$, any global minimum $(x,u)$ in problem~\eqref{problem:sr_optimal_control} with $x(t)\in \K$, and $|\dot x|_g=\const$ and for all $i\le k$, one has $\|u_i\|_{B^{\beta}_{p,\infty}}\le Cl(x)^\kappa$. Without loss of generality, we prove both items assuming $i=1$ since, for other $i\le k$, the proof is identical.

\medskip

Let us make some additional simplifications to problem~\eqref{problem:sr_optimal_control}. Without loss of generality, we assume that
\begin{equation}
\label{ass:f_1_const}
\tag{$A_1$}
	f_1(x)\equiv(1,0,\ldots,0)
\end{equation}

\begin{proof}[Proof of~\eqref{ass:f_1_const}.]
	
	Indeed, according to~\eqref{oc:f_i_const}, each vector filed $f_i$ is constant under an appropriate choice of global coordinates on $M$. Let us take the one that makes $f_1$ constant.
	
\end{proof}

Without loss of generality, we also assume that
\begin{equation}
\label{ass:f_bounded}
\tag{$A_2$}
	M=\R^n
	\quad\text{and}\quad
	\exists c_f>0\ :\ 
	\sup\left\{\|f_i\|_{C^0},\big\|\frac{\partial}{\partial x}f_i\big\|_{C^0},\big\|\frac{\partial^2}{\partial x^2}f_i\big\|_{C^0}\right\} \le c_f.
\end{equation}

\begin{proof}[Proof of~\eqref{ass:f_bounded}.] 
	
	According to~\eqref{oc:f_i_const}, $M$ is an open subset in $\R^n$. Field $f_1$ can be trivially prolonged to $\R^n$ by keeping condition~\eqref{ass:f_1_const}. For $f_i$ having $i\ge 2$, we use the following standard trick. Let $\K$ be embedded into $M$ with its $\varepsilon$-neighborhood. Choose a sufficiently smooth function $\psi:\R^n\to\R$ that satisfies $\psi\equiv 1$ on\footnote{Here we use the Minkowski sum $A+B=\{a+b|a\in A,b\in B\}$.} $\K+\frac13\varepsilon B$, $\psi\equiv 0$ outside $\K+\frac23\varepsilon B$, and $\psi(x)\in [0;1]$ for all $x$. Let us now exchange $f_i$ by~$\tilde f_i=\psi f_i\in C^\infty(\R^n)$ and~$M$ by~$\tilde M=\R^n$ in problem~\eqref{problem:sr_optimal_control}. Under such an exchange of vector fields, we have: (i) some admissible curves that leave $\K+\frac13\varepsilon B$ may become nonadmissible; (ii) lengths of remaining admissible curves may only become bigger; and (iii) lengths of admissible curves in $\K+\frac13\varepsilon B$ stay the same. Hence, any admissible curve in $\K$ can not lose any of properties of being global minimum or $W^1_1$-local minimum. Moreover, the H\"ormaner condition at points in $\K+\frac13\varepsilon B$ is not affected by this exchange.

\end{proof}

From now on, we denote the standard Euclidean norm of vector $x\in M=\R^n$ by $|x|$. Recall that $d_{SR}(x,y)$ denotes the sub-Riemannian distance between $x,y\in M=\R^n$ (if there is no admissible curves joining $x$ and $y$, we put $d_{SR}(x,y)=\infty$). So we claim that

\begin{equation}
\label{ass:bb_thm}
\tag{$A_3$}
	\exists\,\sigma,c_{bb}>0 : \forall x\in \K\ \forall y\in \K+\sigma B
	\qquad
	\frac1{c_{bb}}|x-y|\le d_{SR}(x,y)\le c_{bb}|x-y|^{1/s}.
\end{equation}

\begin{proof}[Proof of~\eqref{ass:bb_thm}.]
	
	It follows directly from the ball-box theorem~\cite{Gromov} and the fact that step of the sub-Riemannian manifold at points in $\K$ never exceeds $s$ by the hypothesis of the theorem.
	
\end{proof}

So, without loss of generality, we assume that conditions~\eqref{ass:f_1_const}, \eqref{ass:f_bounded}, and~\eqref{ass:bb_thm} are fulfilled for optimal control problem~\eqref{problem:sr_optimal_control}.

\subsection{An interpolational estimate on the optimal control}
\label{subsec:interpolation}

We start with a brief explanation of key ideas the proof is based on. Consideration of the interpolational estimate on the optimal control is based on the following well known idea. Denote by $\dot u_1$ the (generalized) derivative of $u_1$. Suppose for a moment that $\dot u_1\in L_r(0;1)$ for some $1<r<\infty$. Then, for any smooth test function $\varphi\in C_0^\infty(0;1)=\{\varphi\in C^\infty:\mathrm{supp}\,\varphi\Subset(0;1)\}$, integration by parts together with the H\"older inequality imply
\[
	\int_0^1 u_1\dot\varphi\,dt=-\int_0^1 \dot u_1\varphi\,dt\le \|\dot u\|_r\|\varphi\|_{r^*},
\]
where $rr^*=r+r^*$. The reversed statement also holds: if $\int_0^1u_1\dot\varphi\,dt\le\const\|\varphi\|_{r^*}$ for any test function $\varphi\in C_0^\infty(0;1)$, then the generalized derivative $\dot u_1$ is a bounded linear functional on a dense subset of $L_{r^*}(0;1)$ and, hence, $\dot u_1\in (L_{r^*}(0;1))^*=L_r(0;1)$. So in the latter case, $u_1$ belongs to the Sobolev space~$W^1_r(0;1)$.

At the same time, for an arbitrary function $v\in L_q(0;1)$, we are only able to guarantee that $\int_0^1 v\dot\varphi\,dt\le \|u\|_q\|\dot\varphi\|_{q^*}$ and one can not expect $\|\varphi\|_{r^*}$ (instead of $\|\dot\varphi\|_{q^*}$) in the right-hand side of the inequality. The proof of Theorem~\ref{thm:main} is based on an interpolational estimate on the optimal control: we will prove that
\[
	\int_0^1 u_1\dot\varphi\,dt \le C\|\varphi\|_r^{1-\theta}\|\dot\varphi\|_q^\theta
\]
for some $0<\theta<1$ and (not necessarily conjugate) $q$ and $r$. This estimate appears to be dual to an estimate on $K$-functional of $\theta$-interpolation of spaces $L_{q^*}(0;1)$ and $W^1_{r^*}(0;1)$ and allows us to prove that $u_1$ is $L_p$-H\"older continuous by proceeding to the Besov spaces.

\medskip

\begin{figure}[ht]\centering
	\def\svgwidth{0.4\textwidth}

	\begingroup%
	\makeatletter%
	\providecommand\color[2][]{%
		\errmessage{(Inkscape) Color is used for the text in Inkscape, but the package 'color.sty' is not loaded}%
		\renewcommand\color[2][]{}%
	}%
	\providecommand\transparent[1]{%
		\errmessage{(Inkscape) Transparency is used (non-zero) for the text in Inkscape, but the package 'transparent.sty' is not loaded}%
		\renewcommand\transparent[1]{}%
	}%
	\providecommand\rotatebox[2]{#2}%
	\newcommand*\fsize{\dimexpr\f@size pt\relax}%
	\newcommand*\lineheight[1]{\fontsize{\fsize}{#1\fsize}\selectfont}%
	\ifx\svgwidth\undefined%
	\setlength{\unitlength}{443.53245678bp}%
	\ifx\svgscale\undefined%
	\relax%
	\else%
	\setlength{\unitlength}{\unitlength * \real{\svgscale}}%
	\fi%
	\else%
	\setlength{\unitlength}{\svgwidth}%
	\fi%
	\global\let\svgwidth\undefined%
	\global\let\svgscale\undefined%
	\makeatother%
	\begin{picture}(1,1.03662029)%
		\lineheight{1}%
		\setlength\tabcolsep{0pt}%
		\put(0,0){\includegraphics[width=\unitlength,page=1]{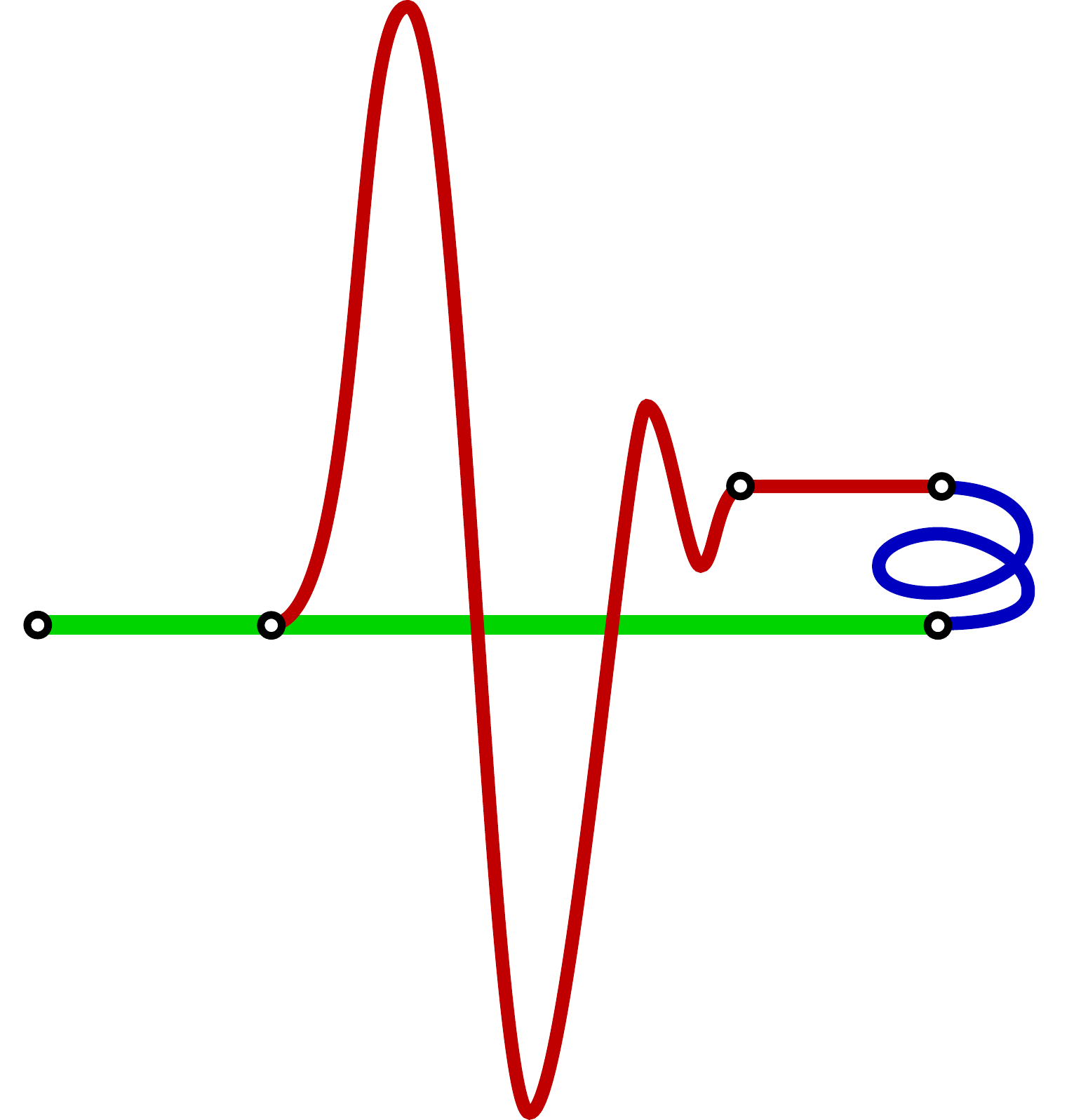}}%
		\put(-0.0031935,0.39745752){\color[rgb]{0,0,0}\makebox(0,0)[lt]{\lineheight{1.25}\smash{\begin{tabular}[t]{l}$x_0$\end{tabular}}}}%
		\put(0.85530544,0.61289639){\color[rgb]{0.75294118,0,0}\makebox(0,0)[lt]{\lineheight{1.25}\smash{\begin{tabular}[t]{l}$y(1,\lambda)$\end{tabular}}}}%
		\put(0.8464639,0.39843477){\color[rgb]{0,0,0}\makebox(0,0)[lt]{\lineheight{1.25}\smash{\begin{tabular}[t]{l}$x_1$\end{tabular}}}}%
		\put(0.96811086,0.50783625){\color[rgb]{0,0,0.75294118}\makebox(0,0)[lt]{\lineheight{1.25}\smash{\begin{tabular}[t]{l}$\tilde y(t)$\end{tabular}}}}%
		\put(0.43402762,0.83963757){\color[rgb]{0.75294118,0,0}\makebox(0,0)[lt]{\lineheight{1.25}\smash{\begin{tabular}[t]{l}\scalebox{1.2}{$y(t,\lambda)$}\end{tabular}}}}%
		\put(0.07864547,0.49270115){\color[rgb]{0,0.83529412,0}\makebox(0,0)[lt]{\lineheight{1.25}\smash{\begin{tabular}[t]{l}\scalebox{1.2}{$x(t)$}\end{tabular}}}}%
	\end{picture}%
	\endgroup%

	\caption{Variation $y(t)$ of the original curve $x(t)$. On the figure, the end points of $\mathrm{supp}\,\varphi$ are also shown as dots for convenience.}
	\label{fig:heart}
\end{figure}

The heart of the proof is the following variation depicted on Fig.~\ref{fig:heart}. Let us choose an arbitrary test function $\varphi\in C_0^\infty(0;1)$ and consider a new control\footnote{The condition $\lambda\ge 0$ is of no importance and introduced by the authors to simplify further computations.} $u_1-\lambda\dot\varphi$ (in the place of $u_1$), $\lambda\ge 0$, and the corresponding trajectory $y(t,\lambda)$ starting at $x^0$, i.e.\ $y(\cdot,\lambda)$ is the unique solution to the following ODE
\[
	\dot y = (u_1-\lambda\dot\varphi)f_1 + u_2f_2+\ldots u_kf_k
\]
with the initial state $y(0,\lambda)=x^0$. Since $\varphi(0)=\varphi(1)=0$, we have $\int_0^1\dot\varphi\,dt=0$. Therefore, no matter how large modulo values the function $\dot\varphi$ takes, its fluctuations on the interval~$[0;1]$ will compensate each other in the variation $y(t,\lambda)$, and the final distance between $x^1$ and $y(1,\lambda)$ will be estimated in terms of the norm of function $\varphi$ itself, and not by its derivative $\dot\varphi$ (see Lemma~\ref{lm:xt_yt_estimate}). Further, if $x^1\ne y(1,\lambda)$, then we add an admissible curve $\tilde y(t)$ that joins $y(1,\lambda)$ and $x^1$. Length of that curve can be controlled by the ball-box Theorem (see Lemma~\ref{lm:basic_u_dot_phi_estimate}). At the end, we compare lengths of the original curve and the resulting variation, which gives us the desired interpolational estimate on the control $u_1$.

\medskip

So, let us denote by $P(t,\lambda)$ the flow of the linear part of the variational equation along $y$, i.e.~$P(t,\lambda)\in\mathrm{GL}(n)$ and
\[
	\dot P = \big(u_2\,\frac{\partial}{\partial x}{f_2} + \ldots + u_k\,\frac{\partial}{\partial x}f_k\big)P
	\quad\text{and}\quad
	P(0,\lambda)=1,
\]
and put $P_0(t)=P(t,0)$ for short. The term $u_1\,\frac{\partial}{\partial x}f_1$ vanish since $f_1$ is constant.

As we have mentioned above, the first key idea in the theorem proof is the following: change of the original trajectory is estimated by the norm of $\varphi$ itself, and not by the norm of its derivative $\dot\varphi$.

\begin{lemma}
\label{lm:xt_yt_estimate}
	
	There exists a constant\footnote{Which does not depend on $\varphi$, $\lambda$, points $x^0,x^1\in \K$ and pair $(x,u)$ with $x(t)\in \K$.} $c>0$ such that for all $\lambda\ge 0$ and $\varphi\in C_0^\infty(0;1)$, one has
	\[
		|y(t,\lambda)-x(t)|\le \lambda\big(|\varphi(t)|+c\,l(x)\|\varphi\|_1\big).
	\]
	If additionally test function $\varphi$ satisfies the following equality\footnote{As usual, here we compute values of brackets $[f_1,f_j]$ at the corresponding point of the original curve $x(t)$.}
	\begin{equation}
	\label{eq:main_varphi_restriction}
		\int_0^1 \varphi P_0([f_1,f_2]u_2+\ldots+[f_1,f_k]u_k)\,dt = 0,
	\end{equation}
	then
	\begin{equation}
	\label{eq:second_variation_estimate}
		|y(1,\lambda)-x^1|\le c\lambda^2\|\varphi\|_2^2.
	\end{equation}
	
\end{lemma}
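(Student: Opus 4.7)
My plan is to exploit \eqref{ass:f_1_const}, namely that $f_1$ is constant, via a change of variables that reduces the variation $y-x$ to a quantity whose dynamics is driven only by the transverse fields $f_2,\ldots,f_k$.

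For the first bound, introduce $w(t) = y(t,\lambda) - x(t) + \lambda\varphi(t)f_1$. Because $f_1$ is constant, the shifted curve $\tilde x(t) = x(t)-\lambda\varphi(t)f_1$ satisfies the controlled ODE with $f_1$-coefficient $u_1-\lambda\dot\varphi$ but with $f_j$ still evaluated at $x(t)$; subtracting this equation from the one for $y$ and using $\varphi(0)=0$ gives $w(0)=0$ and
\[
\dot w(t) = \sum_{j=2}^k u_j(t)\bigl(f_j(y(t,\lambda))-f_j(x(t))\bigr).
\]
The Lipschitz estimate from \eqref{ass:f_bounded}, the identity $|y-x|\le|w|+\lambda|\varphi|$, and the pointwise control $\sum_{j\ge 2}|u_j|\le\sqrt{k}\,l(x)\le\sqrt{k}\,L$ provided by \eqref{eq:u_square_eq_l}, then yield via Gr\"onwall's inequality $|w(t)|\le c\,\lambda\,l(x)\|\varphi\|_1$ with a constant $c$ depending only on $c_f,k,L$. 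The first claim follows from $|y(t,\lambda)-x(t)|\le|w(t)|+\lambda|\varphi(t)|$.

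For the second bound, push the expansion one order further. Taylor's formula with remainder, together with \eqref{ass:f_bounded}, gives $f_j(y)-f_j(x) = \tfrac{\partial f_j}{\partial x}(x)(y-x)+R_j$ with $|R_j|\le c_f|y-x|^2$. Substituting $y-x = w-\lambda\varphi f_1$ and using the identity $\tfrac{\partial f_j}{\partial x}f_1 = [f_1,f_j]$ (valid because $f_1$ is constant) produces
\[
\dot w(t) = A(t)\,w(t) - \lambda\varphi(t)\sum_{j=2}^k u_j(t)[f_1,f_j](x(t)) + R(t),
\]
where $A = \sum_{j\ge 2}u_j\tfrac{\partial f_j}{\partial x}$ is exactly the generator of $P_0$ and $|R(t)|\le c_f\sqrt{k}\,l(x)\,|y-x|^2$. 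Variation of parameters gives
\[
w(1) = -\lambda P_0(1)\int_0^1 P_0^{-1}(s)\varphi(s)\sum_{j\ge 2}u_j(s)[f_1,f_j](x(s))\,ds + P_0(1)\int_0^1 P_0^{-1}(s)R(s)\,ds.
\]
The hypothesis \eqref{eq:main_varphi_restriction} is precisely the statement that the linear-in-$\lambda$ first integral vanishes. For the remainder, the first bound gives $|y-x|^2\le 2\lambda^2(\varphi^2+c^2l(x)^2\|\varphi\|_1^2)$, so Cauchy--Schwarz on $[0;1]$ (which upgrades $\|\varphi\|_1$ to $\|\varphi\|_2$) yields $\int_0^1|R(s)|\,ds\le c'\lambda^2\|\varphi\|_2^2$. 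Since $\varphi(1)=0$ gives $w(1)=y(1,\lambda)-x^1$, we conclude $|y(1,\lambda)-x^1|\le c\lambda^2\|\varphi\|_2^2$.

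The main technical obstacle is arranging the quadratic remainder so that the final bound is genuinely controlled by $\|\varphi\|_2^2$ rather than, say, $\|\varphi\|_\infty^2$; this is precisely where the first estimate feeds back into the second and where the $L^1\to L^2$ Cauchy--Schwarz upgrade is used. A secondary cosmetic point is reconciling the convention for $P_0$ in \eqref{eq:main_varphi_restriction} with the one that arises naturally from the variation-of-parameters formula, but the two vanishing conditions are equivalent up to the invertible left factor $P_0(1)$.
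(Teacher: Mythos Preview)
Your argument is correct and takes a genuinely different route from the paper. The paper differentiates the family $y(\cdot,\lambda)$ in the parameter: it invokes smooth dependence, writes down the variational equations for $y_\lambda$ and $y_{\lambda\lambda}$, solves them via the fundamental matrix $P(t,\lambda)$ along $y$, and then applies the Lagrange and second-order Taylor formulas in $\lambda$. You instead work at a fixed $\lambda$ with the shifted difference $w=y-x+\lambda\varphi f_1$; because $f_1$ is constant this kills the $\dot\varphi$-term outright, so the dynamics of $w$ is driven only by $f_j(y)-f_j(x)$, $j\ge 2$, and a spatial Taylor expansion of the $f_j$ together with $P_0$ (no $\lambda$-dependence anywhere) does the rest. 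Your approach is more elementary---no smooth dependence theorem, no computation of $y_{\lambda\lambda}$---while the paper's approach makes the first- and second-variation structure more transparent. Both yield the same constants up to harmless factors.

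One correction to your final remark: the discrepancy between the vanishing condition you derive, $\int_0^1 P_0^{-1}\varphi\sum_{j\ge 2}u_j[f_1,f_j]\,dt=0$, and the one written in \eqref{eq:main_varphi_restriction} is \emph{not} merely the invertible left factor $P_0(1)$; the integrands differ by $P_0^{-1}$ versus $P_0$, and these are genuinely different linear conditions on $\varphi$. However, if you follow the paper's own proof you will see that it too arrives at the $P_0^{-1}$ version (from the variation-of-parameters formula for $y_\lambda$), so the $P_0$ in the lemma statement is a typo. For the rest of the paper this is harmless: the condition is used only to define a closed subspace $H\subset\mathring W^1_q(0;1)$ of finite codimension, and either version does that.
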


\begin{proof}
	
	Let us estimate norms of operator $P\in\mathrm{GL}(n)$ first. Assumption~\eqref{ass:f_bounded} implies that $|\frac{\partial}{\partial x} f_j|\le c_f$ and $\|u_j\|_\infty\le L$. Therefore, using the Gr\"onwall inequality, we obtain $\|P\|\le e^{kLc_f}$. Similarly, $\|P^{-1}\|\le e^{kLc_f}$. In other words, norms of operators $P$ and $P^{-1}$ are bounded above by constant~$e^{kLc_f}$ for all $t$, $\lambda$, and $\varphi$.
	
	Further, using the theorem on smooth dependence of ODE solutions on the parameter, we obtain that $y(t,\lambda)$ is a.e.\ differentiable in $t$ and smooth in $\lambda$. Thus, according to the Lagrange formula, we have
	\[
		|y(t,\lambda) - x(t)| \le \lambda \sup_{\mu\in[0;\lambda]}|y_\lambda(t,\mu)|
	\]
	Let us compute $y_\lambda$. Assumption~\eqref{ass:f_1_const} implies
	\[
		\dot y_\lambda = \big(u_2\,\frac{\partial}{\partial x}f_2 + \ldots + u_k\, \frac{\partial}{\partial x}f_k\big)y_\lambda - \dot\varphi f_1.
	\]

	Hence, using formula for the general solution to affine ODE, we obtain
	\[
		y_\lambda(t,\lambda) = -P(t,\lambda)\int_0^t \dot\varphi(s)P^{-1}(s,\lambda) f_1(y(s))\,ds,
	\]
	which, after an integration by parts, becomes
	\[
		y_\lambda(t,\lambda) = -\varphi(t)f_1(y(t)) + P(t,\lambda)\int_0^t\varphi P^{-1}([f_2,f_1]u_2+\ldots+[f_k,f_1]u_k)\,ds
	\]
	since $f_1$ is constant and $\varphi(0)=0$. Using $|u(t)|\equiv l(x)$, we have
	\[
		|y_\lambda(t,\lambda)|\le |\varphi(t)| + kLc_fe^{2kLc_f}l(x)\|\varphi\|_1,
	\]
	which proves the first estimate given in the lemma statement.
	
	Now we proceed to the second estimate. Here we restrict ourselves to the right end-point. So let us write down the Taylor expansion up the second order:
	\[
		|y(1,\lambda) - x(1) - \lambda y_\lambda(1,0)| \le \frac12\lambda^2 \sup_{\mu\in[0;\lambda]}|y_{\lambda\lambda}(1,\mu)|
	\]
	and
	\[
		\dot y_{\lambda\lambda} = \big(u_2\frac{\partial}{\partial x} f_2 + \ldots + u_k \frac{\partial}{\partial x}f_k\big)y_{\lambda\lambda} +
			\big(u_2 \frac{\partial^2}{\partial x^2}f_2 + \ldots + u_k \frac{\partial^2}{\partial x^2}f_k\big)(y_\lambda,y_\lambda).
	\]
	
	If $\varphi$ satisfies the linear equation~\eqref{eq:main_varphi_restriction}, then the linear part in the Taylor expansion disappears, i.e.\ $y_\lambda(1,0)=0$. Hence, if we consider only test functions $\varphi\in C^\infty(0;1)$ that satisfy~\eqref{eq:main_varphi_restriction}, then position of the right end $y(1,\lambda)$ is determined by the second variation. Let us estimate it. Since
	\[
		y_{\lambda\lambda}(1,\lambda) = P(1,\lambda) \int_0^1 P^{-1}\big(u_2\,\frac{\partial^2}{\partial x^2}f_2 + \ldots + u_k \,\frac{\partial^2}{\partial x^2}f_k\big)(y_\lambda,y_\lambda)\,dt,
	\]
	we obtain
	\[
		|y_{\lambda\lambda}(1,\lambda)| \le kLc_fe^{2kc_f}\int_0^1|y_\lambda|^2\,dt \le
		kLc_fe^{2kc_f}(\|\varphi\|_2^2 + 2kLc_fe^{kLc_f}\|\varphi\|_1^2 + k^2L^2c_f^2e^{2kLc_f}\|\varphi\|_1^2).
	\]
	Obviously $\varphi\in L_2(0;1)$ implies $\|\varphi\|_1\le\|\varphi\|_2$ by the H\"older inequality. Hence,
	\[
		|y_{\lambda\lambda}(1,\lambda)| \le kLc_fe^{2kc_f} (1 + 2kLc_fe^{kLc_f} + k^2L^2c_f^2e^{2kLc_f})\|\varphi\|_2^2.
	\]

	Thus,
	\[
		c=\max\Big\{1, kLc_fe^{kLc_f},kLc_fe^{2kLc_f}(1 + 2kLc_fe^{kLc_f} + k^2L^2c_f^2e^{2kLc_f})\Big\}.
	\]
\end{proof}	

In the following lemma, we prove that, for some values of parameters $r$, $\zeta$, and $c'$, the following implication is fulfilled
\begin{equation}
\label{eq:u_1_dot_phi_base_estimate}
	c'\,l(x)\lambda\|\varphi\|_1\le 1\\
	\quad\Longrightarrow\quad
	l(y)-l(x)+ 
	c'(l(x)\lambda\,\|\varphi\|_r)^\zeta\ge 0.
\end{equation}
Note that, using this implication, we then obtain a dual interpolational estimate on the control $u_1$ in Proposition~\ref{prop:interpolation_estimate}.

\begin{lemma}
\label{lm:basic_u_dot_phi_estimate}

	There exist

	\begin{itemize}

		\item[\cg] a constant $\hat c'$ such that, for any sub-Riemannian shortest path $(x,u)$ with $x(t)\in \K$, the material implication~\eqref{eq:u_1_dot_phi_base_estimate} holds for all $\lambda\ge 0$ and functions $\varphi\in C_0^\infty(0;1)(0;1)$ if $r=1$, $\zeta=1/s$, and $c'=\hat c'$;
		
		\item[\cw] a depending on the choice of $W^1_1$-local minimum $(x,u)$ constant $\tilde c'=\tilde c'(x,u)$ such that the material implication~\eqref{eq:u_1_dot_phi_base_estimate} holds for all $\lambda\ge 0$ and functions $\varphi\in C_0^\infty(0;1)(0;1)$ if $r=2$, $\zeta=2/s$, $c'=\tilde c'$, and $\varphi$ and $\lambda$ satisfy additional restriction $c'\,l(x)\lambda\|\dot\varphi\|_1\le 1$.
		
	\end{itemize}
\end{lemma}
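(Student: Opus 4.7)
The plan is to feed the variation $y(\cdot,\lambda)$ into the optimality of $x$ by closing it up: concatenate $y(\cdot,\lambda)$ with a nearly minimizing sub-Riemannian curve $\tilde y$ joining $y(1,\lambda)$ back to $x^1$, thus obtaining an admissible curve from $x^0$ to $x^1$. The length correction $l(\tilde y)$ can be made arbitrarily close to $d_{SR}(y(1,\lambda),x^1)$, and the ball-box theorem~\eqref{ass:bb_thm} yields
\[
d_{SR}(y(1,\lambda),x^1)\le c_{bb}\,|y(1,\lambda)-x^1|^{1/s}
\]
as soon as $y(1,\lambda)\in\K+\sigma B$. Lemma~\ref{lm:xt_yt_estimate} then controls the right-hand side, and the smallness hypothesis $c'\,l(x)\lambda\|\varphi\|_r\le 1$ is what forces $y(1,\lambda)$ to remain in that $\sigma$-neighborhood (recall $\varphi(1)=0$).

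For item~\cg, use the first-order bound $|y(1,\lambda)-x^1|\le c\,l(x)\lambda\|\varphi\|_1$ from Lemma~\ref{lm:xt_yt_estimate}. Combining with the ball-box estimate and global optimality gives
\[
l(x)\le l(y(\cdot,\lambda))+c_{bb}\bigl(c\,l(x)\lambda\|\varphi\|_1\bigr)^{1/s},
\]
which is~\eqref{eq:u_1_dot_phi_base_estimate} with $r=1$, $\zeta=1/s$, and $\hat c':=\max\{c_{bb}c^{1/s},\,c/\sigma\}$; the maximum is needed because the same constant must both satisfy the output inequality and enforce the ball-box applicability via $\hat c'\,l(x)\lambda\|\varphi\|_1\le 1$. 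This $\hat c'$ is universal since $c$, $c_{bb}$, $\sigma$ depend only on $\K$ and on the bounds fixed in~\eqref{ass:f_bounded} and~\eqref{ass:bb_thm}.

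For item~\cw, only $W^1_1$-local optimality is available, so the competing curve has to live on $[0;1]$ with control close to $u$ in $L^1$. Using the second-order bound $|y(1,\lambda)-x^1|\le c\lambda^2\|\varphi\|_2^2$ (valid under the constraint~\eqref{eq:main_varphi_restriction}), parametrize $\tilde y$ by arc length (duration $\tau=d_{SR}(y(1,\lambda),x^1)$) and linearly rescale the concatenation $y(\cdot,\lambda)*\tilde y$ to a curve $z:[0;1]\to M$. A direct bookkeeping of the reparametrization gives
\[
\|\dot z-\dot x\|_1 \lesssim \lambda\|\dot\varphi\|_1 + \tau(1+l(x)),\qquad \tau\le c_{bb}\bigl(c\lambda^2\|\varphi\|_2^2\bigr)^{1/s},
\]
so enlarging $\tilde c'$ enough (in terms of the fixed $\varepsilon$ from the definition of $W^1_1$-local minimum and of $l(x)$) ensures that both hypotheses $\tilde c'\,l(x)\lambda\|\varphi\|_2\le 1$ and $\tilde c'\,l(x)\lambda\|\dot\varphi\|_1\le 1$ force $\|\dot z-\dot x\|_1<\varepsilon$. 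Local optimality then gives $l(x)\le l(z)=l(y(\cdot,\lambda))+\tau$, which after absorbing the residual factor $l(x)^{-2/s}$ into $\tilde c'=\tilde c'(x,u)$ is exactly~\eqref{eq:u_1_dot_phi_base_estimate} with $r=2$, $\zeta=2/s$.

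The main technical obstacle is the reparametrization in case~\cw: one must track precisely how rescaling $y(\cdot,\lambda)*\tilde y$ to $[0;1]$ affects $\|\dot z-\dot x\|_1$, and it is this step that introduces the auxiliary hypothesis $\tilde c'\,l(x)\lambda\|\dot\varphi\|_1\le 1$ (absent in~\cg) together with the $(x,u)$-dependence of the constant. Case~\cg avoids this entirely because global optimality imposes no topology on the comparison curve, so the mere existence of the concatenation suffices.
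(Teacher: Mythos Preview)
Your approach is the paper's: concatenate $y(\cdot,\lambda)$ with a near-minimizing return curve, invoke the ball-box estimate~\eqref{ass:bb_thm}, and in case~\cw reparametrize the concatenation onto $[0;1]$ and check $L_1$-closeness of controls so that $W^1_1$-local optimality applies. You also correctly flag that case~\cw needs the linear constraint~\eqref{eq:main_varphi_restriction} to access the second-order bound~\eqref{eq:second_variation_estimate}; the paper's proof imports it as well, and it resurfaces explicitly only in Proposition~\ref{prop:interpolation_estimate}.

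One bookkeeping point in~\cw deserves care. Your claimed estimate
\[
\|\dot z-\dot x\|_1 \lesssim \lambda\|\dot\varphi\|_1 + \tau\bigl(1+l(x)\bigr)
\]
is slightly too optimistic. When $y(\cdot,\lambda)$ is squeezed from $[0;1]$ onto $[0;1-\tau]$, the comparison produces a term of the form $\sum_i\int_0^1|u_i(s)-u_i((1-\tau)s)|\,ds$, and for a merely bounded control $u$ this is only $o(1)$ as $\tau\to 0$ (continuity of dilation in $L_1$), not $O(\tau)$. The paper writes this residual as $\nu(\rho)$ with $\nu(\rho)\to 0$ and then picks a threshold $\hat d$ so that $\rho\le\hat d$ forces $\nu(\rho)<\varepsilon/2$; this is precisely where the $(x,u)$-dependence of $\tilde c'$ enters, since $\nu$ is the $L_1$-modulus of continuity of the specific control $u$. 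Your argument survives this correction---you only need smallness, not a rate, and you already allow $\tilde c'=\tilde c'(x,u)$---but the explicit $O(\tau)$ bound you wrote is not available in general.
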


\begin{proof}
	
	Let us use~\eqref{ass:bb_thm}. By the hypothesis of Theorem~\ref{thm:main}, $x^1\in \K$. Hence, Lemma~\ref{lm:xt_yt_estimate} implies that if $|x^1-y(1,\lambda)|<\sigma$, then
	\[
		d_\lambda \eqdef d_{SR}(x^1,y(1,\lambda))\le c_{bb}|x^1-y(1,\lambda)|^{1/s}.
	\]
	Let us consider only $\lambda\ge0$ such that $c\,l(x)\lambda\|\varphi\|_1\le \sigma$. For these $\lambda$, using Lemma~\ref{lm:xt_yt_estimate}, we obtain
	\begin{equation}
	\label{eq:d_lambda_estimate}
		d_\lambda\le 
		c_{bb}\left(c\,l(x)\lambda\|\varphi\|_1\right)^{1/s}.
	\end{equation}
	If additionally $c\,l(x)(c_{bb})^s\lambda\|\varphi\|_1\le 4^{-s}$, then $d_\lambda\le \frac14$. So, we put
	\[
		c'_0=c\max\{1/\sigma, (4c_{bb})^s\}
	\]
	and assume both $\hat c\ge c'_0$ and $\tilde c\ge c'_0$.

	By the definition of the sub-Riamennian distance, there exists a pair $(\tilde y,\tilde v)$ that satisfies the ODE in problem~\eqref{problem:sr_optimal_control} with terminal conditions $\tilde y(0)=y(1,\lambda)$, $\tilde y(1)=x^1$ (see Fig.~\ref{fig:heart}) and has length $\rho\le 2d_\lambda\le \frac12$. Without loss of generality, we assume $|\tilde v(t)|\equiv \rho$. Consider the following composite pair $(z,w)$, $t\in[0;1]$:
	\[
		z(t)=\begin{cases}
			y(t/(1-\rho)),&\text{for } t\in[0;1-\rho];\\
			\tilde y((t-1+\rho)/\rho),&\text{for }t\in[1-\rho;1];
		\end{cases}
		\quad
		w(t)=\begin{cases}
			\frac1{1-\rho} v(t/(1-\rho)),&\text{for } t\in[0;1-\rho);\\
			\frac1\rho\tilde v((t-1+\rho)/\rho),&\text{for }t\in(1-\rho;1];
		\end{cases}
	\]
	where $v = u-\lambda(\dot\varphi,0\ldots,0)$. Now, the pair $(z,w)$ does satisfy both the ODE and terminal conditions in problem~\eqref{problem:sr_optimal_control}.
	
	First, we suppose that $(x,u)$ is a global minimum for some given points $x^0$ and $x^1$. In this case, if the estimate $\lambda\|\varphi\|_1\le 1/(\hat c'\,l(x))$ holds, then length of the original curve $x(t)$ cannot be greater than length of the composite curve~$z(t)$. Therefore, using Lemma~\ref{lm:xt_yt_estimate}, we obtain
	\begin{equation}
	\label{eq:temp_dot_u_estimate}
		l(x) \le l(z) \le 
		l(y) + 2d_\lambda \le
		l(y) + 2c_{bb}\,(c\lambda\,l(x)\|\varphi\|_1)^{1/s}.
	\end{equation}
	Thus, item \cg in the lemma is proved for $\hat c'=\max\{c'_0,c(c_{bb})^s\}=c'_0$.
	
	\medskip
	
	Second, we prove item \cw in the Lemma. Let $(x,u)$ be not necessarily global but $W^1_1$-local minimum in problem~\eqref{problem:sr_optimal_control} for some given $x^0$ and $x^1$. In this case, we are able to find a constant $\tilde c'$ (stated in the lemma) for any given pair $(x,u)$, but $\tilde c'$ highly depends on $(x,u)$ in general, and it seems that $\tilde c'$ cannot be chosen uniformly among all $W^1_1$-local minima in $\K$.
	
	Summarizing, let us fix a $W^1_1$-local minimum $(x,u)$ for the rest of the lemma proof. We claim that, for small enough $\lambda$, the composite control $w(t)$ belongs to the $\varepsilon$-neighborhood\footnote{Here $\varepsilon$ is taken from the definition of $W^1_1$-local minimum.} of the original control $u(t)$, i.e.\ $\|u-w\|_1<\varepsilon$ (which then implies that the composite pair $(z,w)$ length cannot be less than length of $(x,u)$).
	
	In order to prove inequality $\|u-w\|_1<\varepsilon$, let us estimate the left-hand side:
	\[
		\|u-w\|_1\le\|u-v\|_1 + 
		\int_0^{1-\rho}\left|v(t)-\frac{1}{1-\rho}v(t/(1-\rho))\right|\,dt + 
		\int_{1-\rho}^1|v(t)|\,dt+
		\frac1\rho\int_{1-\rho}^1|\tilde v((t-1+\rho)/\rho)|\,dt.
	\]
	Now we estimate each term in the right-hand side separately:
	\[
		\begin{gathered}
			\|u-v\|_1 = \|(\lambda\dot \varphi,0,\ldots,0)\|_1=\lambda\|\dot\varphi\|_1;\\
			\int_0^{1-\rho}\left|v(t)-\frac1{1-\rho}v(t/(1-\rho))\right|\,dt \le 2\lambda\|\dot\varphi\|_1+\sum_{i=1}^k\int_0^1|(1-\rho)u_i((1-\rho)s)-u_i(s)|\,ds;\\
			\int_{1-\rho}^1|v(t)|\,dt \le \lambda\|\dot\varphi\|_1+\sum_{i=1}^k\int_{1-\rho}^1|u_i(t)|\,dt\le \lambda\|\dot\varphi\|_1 + kL\rho;\\
			\frac1\rho\int_{1-\rho}^1|\tilde v((t-1+\rho)/\rho)|\,dt = \rho.
		\end{gathered}
	\]
	Obviously, $L_1(0;1)$-norm of each function $(1-\rho)u_i((1-\rho)t)-u_i(t)$ tends to $0$ as $\rho\to+0$. Therefore,
	\[
		\|u-w\|_1 \le 4\lambda\|\dot\varphi\|_1 + \nu(\rho)
	\]
	where $\nu(\rho)\ge 0$ and $\lim_{\rho\to+0}\nu(\rho)=0$. Since $\rho\le d_\lambda$, there exists $\hat d>0$ such that $\nu(\rho)<\varepsilon/2$ if $d_\lambda\le\hat d$.
	
	Thus, in order to fulfill inequality $\|u-w\|_1<\varepsilon$, it is enough to have $4\lambda\|
	\dot\varphi\|_1\le \varepsilon/2$ and $c_{bb}\left(c\,l(x)\lambda\|\dot\varphi\|_1\right)^{1/s}\le \hat d$ due to~\eqref{eq:d_lambda_estimate}. The last two inequalities follow from the hypothesis in item \cw of the lemma and the condition in material implication~\eqref{eq:u_1_dot_phi_base_estimate} if
	\[
		\tilde c'l(x)\ge c'_1=\max\{8/\varepsilon,c\,l(x)(c_{bb}/\hat d)^s\}.
	\]
	
	So let us choose $\tilde c'\ge\max\{c'_0,c'_1/l(x)\}$ (recall that $l(x)\ne 0$ due to~\eqref{eq:u_square_eq_l}). Hence, if estimate $\tilde c'\,l(x)\lambda\|\dot\varphi\|_1\le 1$ holds, then $\|u-w\|<\varepsilon$, and length of the original curve $x(t)$ cannot be less than length of the composite curve $z(t)$: $l(x)\le l(y)+2d_\lambda$. 
	
	Therefore, we obtain the same result as in item \cg, but now we are able to use more accurate estimate on the distance between $x^1$ and $y(1,\lambda)$, which is given in the second part of Lemma~\ref{lm:xt_yt_estimate}:
	\[
		d_\lambda \le c_{bb}|x^1-y(1,\lambda)|^{1/s} \le c_{bb}c^{1/s}\lambda^{2/s}\|\varphi\|^{2/s}_2.
	\]
	Thus, item \cw of the Lemma is proved for $\tilde c'\ge c_{bb}(c/l(x)^2)^{1/s}$. In other words, item \cw of the lemma is proved by taking
	\[
		\tilde c' = \max\{c'_0,c'_1/l(x),c_{bb}(c/l(x)^2)^{1/s}\}.
	\]
\end{proof}

In order to obtain the interpolational estimate on the control $u_1$, we construct a lower bound for the left-hand side of~\eqref{eq:u_1_dot_phi_base_estimate}. Vector fields $f_j$ are orthonormal due to assumption~\eqref{oc:f_i_orthonormal}. Hence,
\begin{equation}
\label{eq:l_y_sqrt}
	l(y) = \int_0^1 \sqrt{(u_1-\lambda\dot\varphi)^2+u_2^2+\ldots+u_k^2}\,dt=\int_0^1\sqrt{l(x)^2-2\lambda u_1\dot\varphi+\lambda^2\dot\varphi^2}\,dt.
\end{equation}
Note that the previous equality is the only place in the proof of Theorem~\ref{thm:main} where we are actually use assumption~\eqref{oc:f_i_orthonormal}. It has not been used before and will not be used further.

It is not so hard to estimate $l(y)$ with the help of the following lemma.

\begin{lemma}
\label{lm:sqrt_estimate}

	Let $l>0$ and $1\le q\le 2$. Then, there exists a number $c_{SR}>0$ such that, for any $\psi\in C_0^\infty(0;1)$ and $u_1\in L_\infty(0;1)$ having $\|u_1\|_\infty\le l$, the following inequality is fulfilled
	\[
		\int_0^1\sqrt{l^2-2u_1\psi+\psi^2}\,dt -l \le c_{SR}l^{1-q}\|\psi\|_q^{q} - \frac1l\int_0^1u_1\psi\,dt.
	\]
	
\end{lemma}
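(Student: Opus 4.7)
\textbf{Proof plan for Lemma \ref{lm:sqrt_estimate}.}

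The plan is to pointwise bound the integrand $f(\psi):=\sqrt{l^2-2u_1\psi+\psi^2}$ by its linearization around $\psi=0$, picking up a quadratic remainder which we then control in $\|\psi\|_q^q$ by splitting the domain according to the size of $|\psi|$.

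First I would invoke the elementary inequality $\sqrt{l^2+b}\le l+b/(2l)$, valid whenever $l^2+b\ge 0$ (which squares out to $b^2/(4l^2)\ge 0$). With $b=-2u_1\psi+\psi^2$ the radicand is $(u_1-\psi)^2+(l^2-u_1^2)\ge 0$ because $\|u_1\|_\infty\le l$, so the inequality applies and gives the pointwise estimate
\[
f(\psi)-l+\frac{u_1\psi}{l}\ \le\ \frac{\psi^2}{2l}.
\]
If $q=2$ we are already done with $c_{SR}=1/2$. For $1\le q<2$ the right-hand side is not directly comparable to $l^{1-q}|\psi|^q$, which is why I would then split the integration interval into $E=\{|\psi|\le l\}$ and its complement.

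On $E$ I would use $|\psi|^{2-q}\le l^{2-q}$ to write $\psi^2/(2l)\le \tfrac12 l^{1-q}|\psi|^q$. On the complement $E^c=\{|\psi|>l\}$ the quadratic bound is too weak, so I would replace it by the crude estimate $f(\psi)\le\sqrt{(l+|\psi|)^2}=l+|\psi|$, together with $|u_1\psi/l|\le|\psi|$, yielding
\[
f(\psi)-l+\frac{u_1\psi}{l}\ \le\ 2|\psi|.
\]
Since $q\ge 1$ the function $t\mapsto t^{1-q}$ is nonincreasing, hence on $E^c$ we have $|\psi|^{1-q}\le l^{1-q}$ and therefore $2|\psi|\le 2l^{1-q}|\psi|^q$.

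Adding the two contributions and integrating gives
\[
\int_0^1\bigl(f(\psi)-l\bigr)\,dt+\frac1l\int_0^1 u_1\psi\,dt\ \le\ \frac{l^{1-q}}{2}\int_E|\psi|^q\,dt+2l^{1-q}\int_{E^c}|\psi|^q\,dt\ \le\ 2\,l^{1-q}\|\psi\|_q^q,
\]
which is the claim with $c_{SR}=2$. I do not expect any serious obstacle here; the only subtle point is checking that the elementary $\sqrt{1+x}\le 1+x/2$ is valid on the whole domain (which follows from $\|u_1\|_\infty\le l$), and then choosing the threshold for the dichotomy precisely at $|\psi|=l$ so that the two regime estimates meet at the same power $l^{1-q}|\psi|^q$.
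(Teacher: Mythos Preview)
Your proof is correct and follows essentially the same route as the paper's: linearize the square root in the small-$|\psi|$ regime, use a crude triangle-inequality bound in the large-$|\psi|$ regime, then convert both to $l^{1-q}|\psi|^q$ and integrate. The paper first normalizes to $l=1$, splits at the threshold $|\psi|=4$, and obtains $c_{SR}=4$; you keep $l$ general, split at $|\psi|=l$, and get the sharper constant $c_{SR}=2$, but the underlying argument is the same.
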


\begin{proof}

	First, let $l=1$. Then
	
	\begin{itemize}
		
		\item If $|\psi(t)|\le 4$, then we are able to estimate the square root in standard manner:
		\[
			\sqrt{1-2 u_1\psi+\psi^2} - 1 \le 
			- u_1\psi + \frac12\psi^2\le - u_1\psi + \psi^2.
		\]
		
		\item If $|\psi(t)|>4$, then $|u_1\psi|\le |\psi|\le \frac14\psi^2$ (as $|u_1|\le 1$) and $1\le\frac1{16}\psi^2$. Hence,
		\[
			\sqrt{1-2 u_1\psi+\psi^2} - 1 \le
			2|\psi|\le
			- u_1\psi + 4|\psi|.
		\]
		
	\end{itemize}
	
	Denote the left-hand side of the inequality in the lemma statement by~$F(l,u_1,\psi)$. Then
	\begin{equation}
	\label{eq:without_sqrt_u_dot_phi_estimate}
		F(1,u_1,\psi)\le  
		\int\limits_{t:|\psi|\le 4}\psi^2\,dt +
		4 \int\limits_{t:|\psi|> 4}|\psi|\,dt -
		\int_0^1 u_1\psi\,dt.
	\end{equation}
	
	Now we unite both terms in the right-hand side of~\eqref{eq:without_sqrt_u_dot_phi_estimate} in the following way. Let $1\le q\le2$ be an arbitrary number. Then, for the first integral, we have
	\[
		\int\limits_{t:|\psi|\le 4}\psi^2\,dt = 
		\int\limits_{t:|\psi|\le 4}
		|\psi|^{2-q}|\psi|^q\,dt \le
		4^{2-q} \int\limits_{t:|\psi|\le 4}|\psi|^q\,dt.
	\]
	For the second integral, we have
	\[
		4\int\limits_{t:|\psi|>4}|\psi|\,dt=
		4^{2-q}\int\limits_{t:|\psi|>4}4^{q-1}|\psi|\,dt\le
		4^{2-q}\int\limits_{t:|\psi|>4}
		|\psi|^{q-1}|\psi|\,dt =
		4^{2-q} \int\limits_{t:|\psi|>4} |\psi|^q\,dt.
	\]	
	Thus, inequality~\eqref{eq:without_sqrt_u_dot_phi_estimate} implies
	\[
		F(1,u_1,\varphi) \le
		4^{2-q} \int_0^1 |\psi|^q\,dt -
		\int_0^1u_1\psi\,dt.
	\]
	So
	\[
		F(1,u_1,\psi) \le 4 \|\psi\|_q^q - \int_0^1u_1\psi.
	\]
	In other words, putting $c_{SR}=4$ proves the lemma in the case $l=1$.
	
	For an arbitrary $l$, the lemma follows from the fact that $F$ is homogeneous: $F(l,u_1,\psi) = lF(1,u_1/l,\psi/l)$.

\end{proof}

Let us now combine the previous two lemmas and show that they imply the following key interpolational estimate on the control $u_1$ for some $\zeta>0$:
\begin{equation}
\label{eq:u_1_dot_phi_middle_estimate}
	\begin{gathered}
		\int_0^1 u_1\dot\varphi\,dt\le
			c''l(x)^\kappa\|\varphi\|_r^\theta\|\dot\varphi\|_q^{1-\theta},\\
		\text{where}\quad
		\theta = \frac{(q-1)\zeta}{q-\zeta}
		\quad\text{and}\quad
		\kappa = \frac{1+2q\zeta-3\zeta}{q-\zeta}.
	\end{gathered}		
\end{equation}
Here $\varphi$ is an arbitrary test function that vanishes at $t=0,1$, namely
\[
	\varphi\in \mathring W^1_q(0;1)=
	\left\{\varphi\in W^1_q(0;1) : \varphi(0)=\varphi(1) \right\}.
\]

\begin{proposition}[interpolational estimate on control]
\label{prop:interpolation_estimate}	
\hspace{2em}
	\begin{itemize}

		\item[\cg] There exists a constant $\hat c''$ such that, for any $1\le q\le 2$, any sub-Riemannian shortest path $(x,u)$ with $x(t)\in \K$, and any test function $\varphi\in\mathring W^1_q(0;1)$, inequality~\eqref{eq:u_1_dot_phi_middle_estimate} holds for $c''=\hat c''$, $r=1$, and $\zeta=1/s$;
		
		\item[\cw] For any $1\le q\le 2$, there exists a constant $\tilde c''$ (which depends on the choice of $W^1_1$-local minimum $(x,u)$ in general, $\tilde c''=\tilde c''(x,u,q)$) such that, for any test function $\varphi\in\mathring W^1_q(0;1)$, inequality~\eqref{eq:u_1_dot_phi_middle_estimate} holds for $c''=\tilde c''$, $r=2$, and $\zeta=2/s$.
		
	\end{itemize}
\end{proposition}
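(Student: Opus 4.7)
The plan is to combine Lemma~\ref{lm:sqrt_estimate} with the material implication in Lemma~\ref{lm:basic_u_dot_phi_estimate}, treating $\lambda>0$ as a free parameter to be optimized. Substituting $l=l(x)$ and $\psi=\lambda\dot\varphi$ into Lemma~\ref{lm:sqrt_estimate} via formula~\eqref{eq:l_y_sqrt} yields
\[
l(y)-l(x)\le c_{SR}\,l(x)^{1-q}\lambda^{q}\|\dot\varphi\|_q^{q}-\frac{\lambda}{l(x)}\int_0^1 u_1\dot\varphi\,dt,
\]
while Lemma~\ref{lm:basic_u_dot_phi_estimate} simultaneously gives $l(y)-l(x)\ge -c'(l(x)\lambda\|\varphi\|_r)^{\zeta}$ (with $(r,\zeta)=(1,1/s)$ in~\cg\ and $(r,\zeta)=(2,2/s)$ in~\cw), provided $\lambda$ meets the smallness hypothesis recorded in that lemma. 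Adding the two and multiplying by $l(x)/\lambda$ produces, for each admissible $\lambda$,
\[
\int_0^1 u_1\dot\varphi\,dt\ \le\ A\lambda^{q-1}+B\lambda^{\zeta-1},\qquad A=c_{SR}\,l(x)^{2-q}\|\dot\varphi\|_q^{q},\quad B=c'\,l(x)^{1+\zeta}\|\varphi\|_r^{\zeta}.
\]

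Next I would optimize in $\lambda$. Since $q-1\ge 0$ while $\zeta-1<0$, the two summands have opposite monotonicities, so their sum is minimized where they coincide, at $\lambda^{\star}=(B/A)^{1/(q-\zeta)}$. Substituting back gives a bound proportional to $A^{(1-\zeta)/(q-\zeta)}B^{(q-1)/(q-\zeta)}$, and a short exponent computation confirms this equals exactly $c''\,l(x)^{\kappa}\|\varphi\|_r^{\theta}\|\dot\varphi\|_q^{1-\theta}$ with the announced $\theta=(q-1)\zeta/(q-\zeta)$ and $\kappa=(1+2q\zeta-3\zeta)/(q-\zeta)$, establishing~\eqref{eq:u_1_dot_phi_middle_estimate} in the regime where $\lambda^{\star}$ is admissible.

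Two points require care. First, $\lambda^{\star}$ may violate the smallness constraint of Lemma~\ref{lm:basic_u_dot_phi_estimate} ($c'l(x)\lambda\|\varphi\|_1\le 1$ in~\cg, $c'l(x)\lambda\|\dot\varphi\|_1\le 1$ in~\cw); in that regime I would evaluate the bound at the boundary value of $\lambda$ instead, where the $B$-term dominates and the resulting quantity of order $l(x)^{2}\|\varphi\|_1$ can be absorbed into the target estimate using the Poincar\'e inequality $\|\varphi\|_1\le\|\dot\varphi\|_q$ on $\mathring W^1_q(0;1)$ together with the uniform bound $l(x)\le L$. Second, and this I expect to be the main obstacle, the exponent $\zeta=2/s$ in~\cw\ rests on the second-order variation estimate~\eqref{eq:second_variation_estimate} from Lemma~\ref{lm:xt_yt_estimate}, which is available only for test functions $\varphi$ obeying the linear orthogonality condition~\eqref{eq:main_varphi_restriction}. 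For an arbitrary $\varphi\in\mathring W^1_q(0;1)$ I would decompose it as an $L_2$-orthogonal sum of a component satisfying~\eqref{eq:main_varphi_restriction}, to which the argument above applies directly, plus a correction lying in a fixed finite-dimensional complement of codimension at most $n$, on which all norms are equivalent and the required bound follows from boundedness of $u_1$ and equivalence of norms in finite dimensions (with the constant absorbed into $\tilde c''(x,u)$, as~\cw\ permits).
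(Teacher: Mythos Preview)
Your overall strategy matches the paper's: feed \eqref{eq:l_y_sqrt} into Lemma~\ref{lm:sqrt_estimate}, combine with Lemma~\ref{lm:basic_u_dot_phi_estimate}, optimize in $\lambda$, and then deal separately with the $\lambda$-smallness constraint and (in \cw) the linear constraint~\eqref{eq:main_varphi_restriction}. The optimization step and the exponent bookkeeping are exactly as in the paper.

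Where you diverge is in how you remove the finite-codimension constraint~\eqref{eq:main_varphi_restriction}. The paper does not split $\varphi$; instead it notes that the linear functional $\varphi\mapsto\int u_1\dot\varphi$ is dominated on the hyperplane $H$ by the sublinear functional $\mathfrak p(\varphi)=c\|\varphi\|_2^{\theta}\|\dot\varphi\|_q^{1-\theta}$, applies Hahn--Banach to extend it to all of $\mathring W^1_q$ dominated by $\mathrm{conv}\,\mathfrak p\le\mathfrak p$, and observes that the difference between the original functional and the extension is a fixed linear combination $\langle\mu,P_0([f_1,f_2]u_2+\dots)\rangle\in L_\infty$, hence controlled by $\|\varphi\|_1$. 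Your decomposition approach is also viable, but as written it has a technical slip: the \emph{$L_2$-orthogonal} complement to $H$ is spanned by the components of $P_0([f_1,f_2]u_2+\dots+[f_1,f_k]u_k)$, which involve the merely $L_\infty$ control $u$ and are generically not in $W^1_q$, let alone $\mathring W^1_q$. So the $L_2$-projection $\varphi_H$ of $\varphi$ need not lie in $\mathring W^1_q$, and you cannot apply the estimate to it. The fix is easy: choose instead any complement $V\subset\mathring W^1_q$ (e.g.\ $V=\mathrm{span}\{e_1,\dots,e_n\}$ with $e_i\in C_0^\infty$ and $\det[\int e_i g_j]\ne 0$); the projection onto $V$ is then bounded by $C\|\varphi\|_1$ since the defining functionals are $L_\infty$, and Poincar\'e ($\|\varphi\|_r\le\|\dot\varphi\|_q$ on $\mathring W^1_q$) lets you control $\|\varphi_H\|_r$ and $\|\dot\varphi_H\|_q$ by those of $\varphi$. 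The resulting constants depend on $(x,u)$ through $g$, which \cw explicitly allows.

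Your treatment of the $\lambda$-smallness constraint (evaluate at the boundary $\lambda_b$, where the $B$-term dominates since $\lambda_b<\lambda^\star$ forces $A\lambda_b^{q-1}<A(\lambda^\star)^{q-1}=B(\lambda^\star)^{\zeta-1}<B\lambda_b^{\zeta-1}$) is a legitimate alternative to the paper's device, which instead drops Lemma~\ref{lm:basic_u_dot_phi_estimate} entirely in that regime and uses the trivial bound $\int u_1\dot\varphi\le l(x)\|\dot\varphi\|_q$ together with the \emph{reversed} constraint to manufacture the factor $\|\varphi\|_r^\theta$. Both routes land on the same estimate.
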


\begin{proof}

	If $q=1$, then inequality~\eqref{eq:u_1_dot_phi_middle_estimate} is trivially fulfilled for~$c''=1$. So we assume that~$q>1$.

	First, let $\varphi$ be a smooth function, i.e.\ assume that $\varphi\in C_0^\infty(0;1)$. If we substitute expression~\eqref{eq:l_y_sqrt} for $l(y)$ and the estimate in Lemma~\ref{lm:sqrt_estimate} (with $\psi=\lambda\dot\varphi$ and $l=l(x)$) into the inequality in~\eqref{eq:u_1_dot_phi_base_estimate}, we then obtain
	\begin{equation}
	\label{eq:dot_u_middle_estimate}
		\lambda\int_0^1 u_1\dot\varphi\,dt\le c_{SR}\lambda^{q} l(x)^{2-q}\|\dot\varphi\|_q^q + c'\lambda^{\zeta}l(x)^{1+\zeta}\|\varphi\|_r^{\zeta}.
	\end{equation}

	Let us divide~\eqref{eq:dot_u_middle_estimate} by $\lambda$. Two terms will be obtained in the rhs: one has factor $\lambda^{q-1}$ and the other has factor $\lambda^{\zeta-1}$. The first one tend to $\infty$ as $\lambda\to+\infty$ and vanish as $\lambda\to0$. The second one tends to $\infty$ as $\lambda\to 0$ and vanish as $\lambda\to\infty$. So an optimal choice of $\lambda$ is a minimum point of the obtained rhs (which obviously exists due to described behavior of factors $\lambda^{q-1}$ and $\lambda^{\zeta-1}$). For simplicity put
	$\lambda_0=l(x)^{\frac{q+\zeta-1}{q-\zeta}}\|\varphi\|_r^{\frac\zeta{q-\zeta}}\|\dot\varphi\|_q^{-\frac{q}{q-\zeta}}$ which differs from the optimal value of $\lambda$ by a complicated constant factor of no importance. So we plug $\lambda=\lambda_0$ into~\eqref{eq:dot_u_middle_estimate}:
	\[
		\int_0^1 u_1\dot\varphi \le
			(c'+c_{SR})l(x)^\kappa\|\varphi\|_r^\theta\|\dot\varphi\|_q^{1-\theta}.
	\]

	Thus, in the case \cg, the desired inequality is proved for $\varphi\in C_0^\infty(0;1)(0;1)$ under restriction $c'l(x)\lambda_0\|\varphi\|_1\le 1$. In the case \cw, restrictions~\eqref{eq:main_varphi_restriction} and $c'l(x)\lambda_0\|\dot\varphi\|_1\le1$ should be also fulfilled. 
	
	First, let us throw away restriction $c'l(x)\lambda_0\|\varphi\|_1\le 1$. On the one hand, since
	\[
		c'\lambda_0l(x)\|\varphi\|_1 =
		c'l(x)^{\frac{2q-1}{q-\zeta}}
		(\|\varphi\|_1/\|\dot\varphi\|_q)^{\frac{q}{q-\zeta}},
	\]
	the restriction $c'\,l(x)\lambda_0\|\varphi\|_1\le 1$ is definitely fulfilled if
	\[
		c''_0
		l(x)^{\frac{2q-1}q}
		\|\varphi\|_1\le \|\dot\varphi\|_{L_{q}}
	\]
	where $c''_0\ge(c')^{\frac{q-\zeta}{q}}$. Since $1\le q\le 2$, we may put $c''_0=\max\{(c')^{1-\frac\zeta{2}},(c')^{1-\zeta}\}$. On the other hand, if the opposite inequality is fulfilled for some test function $\varphi$, then $\|u_1\|_\infty\le l(x)$ implies
	\[
		\begin{aligned}
			\int_0^1u_1\dot\varphi\,dt \le&
			l(x)\|\dot\varphi\|_1\le 
			l(x)\|\dot\varphi\|_q\le
			l(x)(c''_0l(x)^{2-\frac1q}
				\|\varphi\|_1)^\theta\|\dot\varphi\|_q^{1-\theta}= \\
			=&(c''_0)^\theta l(x)^{\kappa + \frac{q-1}{q}}
				\|\varphi\|_1^\theta\|\dot\varphi\|_q^{1-\theta}\le
			c''_1l(x)^\kappa
				\|\varphi\|_r^\theta\|\dot\varphi\|_q^{1-\theta}
		\end{aligned}
	\]
	where $c''_1\ge (c''_0)^\theta L^\frac{q-1}{q}$. Since $0\le\theta\le \zeta/(2-\zeta)$ and $0\le (q-1)/q\le \frac12$, we may put $c''_1=\max\{c''_0,(c''_0)^{\zeta/(2-\zeta)}\}\max\{1,L^{1/2}\}$.
	
	Thus, we have proved the case \cg for $c''=\hat c''=\max\{\hat c'+c_{SR},c''_0,c''_1\}$ and $\varphi\in C_0^\infty(0;1)$. In order to complete the proof of \cg, it remain to note that the space $C_0^\infty(0;1)$ is dense in $\mathring W^1_q(0;1)$ (see \cite[4.2.4]{Triebel}) and functionals $\int u_1\dot\varphi$, $\|\varphi\|_r$, and $\|\dot\varphi\|_q$ are all continuous on $\mathring W^1_q(0;1)$.
	
	Let us now proceed to the case \cw. On the one hand, since
	\[
		c'\lambda_0l(x)\|\dot\varphi\|_q =
		c'l(x)^{\frac{2q-1}{q-\zeta}}
		(\|\varphi\|_1/\|\dot\varphi\|_q)^{\frac{\zeta}{q-\zeta}},
	\]
	inequality $c'\,l(x)\lambda_0\|\dot\varphi\|_1\le 1$ is definitely fulfilled if
	\[
		c''_1 l(x)^{\frac{2q-1}{\zeta}}\|\varphi\|_1\le \|\dot\varphi\|_q
	\]
	where $c''_1=(c')^{-1+\frac{q}\zeta}$. On the other hand, if the opposite inequality is fulfilled for some test function $\varphi$, then $\|u_1\|_\infty\le l(x)\le L$ implies
	\[
		\begin{aligned}
			\int_0^1u_1\dot\varphi\,dt &\le
			l(x)\|\dot\varphi\|_1\le 
			l(x)\|\dot\varphi\|_q\le
			l(x)(c''_1l(x)^{\frac{2q-1}{\zeta}}
				\|\varphi\|_1)^\theta\|\dot\varphi\|_q^{1-\theta}=\\
			=&(c''_1)^\theta l(x)^{\kappa+2(q-1)}
				\|\varphi\|_r^\theta\|\dot\varphi\|_q^{1-\theta}=
			c_2''l(x)^\kappa
				\|\varphi\|_r^\theta\|\dot\varphi\|_q^{1-\theta}
		\end{aligned}
	\]
	where $c_2''=(c_1'')^\theta\max\{1,L^2\}$.
	
	Thus, in the case \cw, the desired inequality is proved under two additional restrictions: (i) $\varphi\in C_0^\infty(0;1)$ and (ii) $\varphi$ satisfies~\eqref{eq:main_varphi_restriction}. Note that equality~\eqref{eq:main_varphi_restriction} defines a closed linear subspace in $\mathring W^1_q(0;1)$ of the from
	\[
		H=\left\{
		\varphi\in \mathring W^1_{q}(0;1) : 
		\int_0^1 \varphi P_0([f_1,f_2]u_2+\ldots+[f_1,f_k]u_k)\,dt=0
		\right\}.
	\]
	Subspce $H$ has finite codimension $n$ as $u\in L_\infty(0;1)$ and $P_0\in L_\infty(0;1)$. Space $C_0^\infty(0;1)(0;1)$ is dense in $\mathring W^1_q(0;1)$ and, hence, it is dense in $H$. Therefore, the desired inequality is fulfilled for all $\varphi\in H$ as both functionals $\mathfrak{p}(\varphi)=c''_1 \|\varphi\|_2^{\theta}\|\dot\varphi\|_q^{1-\theta}$ and $\int_0^1u_1\dot\varphi\,dt$ are continuous on $\mathring W^1_q(0;1)$.
	
	Generalized derivative $\dot u_1$ can be considered as a linear functional on $\mathring W^1_q(0;1)$. Moreover, we have proved that it is dominated by $\mathfrak{p}(\varphi)$ on $H$. Hence, $\dot u_1|_H\le \mathrm{conv}\,\mathfrak{p}|_H$ due to linearity of $\dot u_1$. Functional $\mathrm{conv}\,\mathfrak{p}$ is positively homogeneous, convex, and nonnegative. Therefore, $\mathrm{conv}\,\mathfrak{p}$ is a seminorm on $\mathring W^1_q(0;1)$. Then, Hahn--Banach separation theorem implies that linear functional $\dot u_1$ has an extension~$\mathfrak{w}$ to the whole space $\mathring W^1_q(0;1)$ that is also dominated by $\mathrm{conv}\,\mathfrak{p}$, i.e.\ $\mathfrak{w}\le \mathrm{conv}\,\mathfrak{p}\le \mathfrak{p}$. Since $\mathfrak{w}$ and $\dot u_1$ coincide on $H$, the $H$ definition implies
	\[
		\dot u_1 = \mathfrak{w} + \langle\mu,P_0([f_1,f_2]u_2+\ldots+[f_1,f_k]u_k)\rangle
	\]
	for a tuple of $n$ constants $\mu\in\R^{n*}$. It remains to note that $P_0([f_1,f_2]u_2+\ldots+[f_1,f_k]u_k)\in L_\infty(0;1)$. So, for some constant $c''_3$ and any test function $\varphi\in\mathring W^1_q(0;1)$, we obtain
	\[
		\int_0^1\varphi \big\langle\mu,P_0([f_1,f_2]u_2+\ldots+[f_1,f_k]u_k)\big\rangle\,dt\le 
		c''_3\|\varphi\|_1\le
		c''_3\|\varphi\|_1^\alpha\|\dot\varphi\|_1^{1-\alpha}\le
		c''_3\|\varphi\|_2^\alpha\|\dot\varphi\|_q^{1-\alpha}.
	\]
	Thus, it is sufficient to put $\tilde c'' = c''_2 + c''_3 l(x)^{-\kappa}$ (recall that, in the case \cw, constant $\tilde c''$ may depend on the choice of local minimum $(x,u)$ in general).

\end{proof}

\subsection{Duality and pproximation of the optimal control}
\label{subsec:duality}

So, in order to improve length of the original trajectory $x(t)$, we are trying to choose a test function $\varphi(t)$ in such a way that the control variation $u_1\mapsto u_1-\lambda\dot\varphi$ improves the functional~\eqref{problem:sr_optimal_control}. It means that the term  $-u_1\dot\varphi$ should be as negative as possible. However, the $x(t)$ optimality forbids $-\int_0^1 u_1\dot\varphi\,dt$ to be too negative. Namely, it is bounded by Proposition~\ref{prop:interpolation_estimate}. Nonetheless, let us try to choose $\varphi(t)$ as best as possible assuming that norms $\|\varphi\|_r$ and $\|\dot\varphi\|_q$ in the right-hand side of~\eqref{eq:u_1_dot_phi_middle_estimate} are given. To do so, we consider the following convex optimization problem
\begin{equation}
	\label{problem:approximation_straight}
	\begin{gathered}
		-\int_0^1u_1\dot\varphi\,dt\to\min_{\varphi\in W^1_q[0;1]}\\
		\begin{cases}
		\|\varphi\|_r\le 1;\\
		\|\dot\varphi\|_q\le M;\\
		\varphi(0)=\varphi(1)=0.
	\end{cases}
	\end{gathered}
\end{equation}

Denote by $S_r^q(u_1,M)$ the value (i.e.\ infimum) of problem~\eqref{problem:approximation_straight}. So the $x(t)$ optimality implies the following lower bound on $S_r^q(u_1,M)$:

\begin{corollary}
\label{cor:SM_estimate}

	If for some $r\ge 1$, $q\ge 1$, $\zeta\le 1$, and $c''>0$, the control $u_1$ satisfies inequality~\eqref{eq:u_1_dot_phi_middle_estimate}, then, for any $M\ge 0$, one has
	\[
		S_r^q(u_1,M)\ge- c''l(x)^{\kappa} M^{1-\theta}.
	\]
\end{corollary}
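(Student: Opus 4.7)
The plan is to observe that this corollary is essentially an immediate translation of the interpolational estimate~\eqref{eq:u_1_dot_phi_middle_estimate} into a lower bound on the value of the convex optimization problem~\eqref{problem:approximation_straight}. First I would note that any function $\varphi$ feasible in~\eqref{problem:approximation_straight} satisfies $\varphi\in W^1_q[0;1]$ with $\varphi(0)=\varphi(1)=0$, so that $\varphi\in\mathring W^1_q(0;1)$ and the hypothesis inequality~\eqref{eq:u_1_dot_phi_middle_estimate} is applicable.

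Next I would plug the two constraints $\|\varphi\|_r\le 1$ and $\|\dot\varphi\|_q\le M$ directly into the right-hand side of~\eqref{eq:u_1_dot_phi_middle_estimate}. Since the exponents satisfy $\theta\in[0,1]$ and $1-\theta\in[0,1]$ under the ranges $q\ge 1$, $\zeta\le 1$ given in the hypothesis (as one checks from the formula $\theta=\frac{(q-1)\zeta}{q-\zeta}$), the monotonicity of $t\mapsto t^\theta$ on $[0,\infty)$ gives
\[
    \int_0^1 u_1\dot\varphi\,dt\le c''\,l(x)^{\kappa}\|\varphi\|_r^{\theta}\|\dot\varphi\|_q^{1-\theta}\le c''\,l(x)^{\kappa}\cdot 1^{\theta}\cdot M^{1-\theta}=c''\,l(x)^{\kappa}M^{1-\theta}
\]
for every feasible $\varphi$. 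Multiplying by $-1$ yields $-\int_0^1 u_1\dot\varphi\,dt\ge -c''\,l(x)^{\kappa}M^{1-\theta}$, and taking the infimum over feasible $\varphi$ produces the asserted bound $S_r^q(u_1,M)\ge -c''\,l(x)^{\kappa}M^{1-\theta}$.

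There is essentially no obstacle: the entire content of the corollary is a rewriting of Proposition~\ref{prop:interpolation_estimate} in the form of a value-function estimate. The only minor check is the non-negativity of both exponents $\theta$ and $1-\theta$, which is automatic in the relevant parameter ranges and licenses the monotonicity step above.
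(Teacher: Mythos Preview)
Your proof is correct and follows essentially the same approach as the paper: apply inequality~\eqref{eq:u_1_dot_phi_middle_estimate} to any feasible $\varphi$, use the constraints $\|\varphi\|_r\le 1$ and $\|\dot\varphi\|_q\le M$ together with $0\le\theta\le 1$ to bound the right-hand side by $c''l(x)^\kappa M^{1-\theta}$, and pass to the infimum. The paper's argument is slightly terser but identical in content.
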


\begin{proof}
	
	Indeed, \eqref{eq:u_1_dot_phi_middle_estimate} implies
	\[
		\int_0^1u_1\dot\varphi\,dt \le
		c''l(x)^\kappa\|\varphi\|_r^\theta\|\dot\varphi\|_q^{1-\theta}\le
		c''l(x)^\kappa M^{1-\theta}
	\]
	since $0\le\theta\le1$.
\end{proof}

The value $S_r^q(u_1,M)$ of infimum in problem~\eqref{problem:approximation_straight} is closely related to the approximation quality of the control $u_1$ by smooth function. It is well known that the rate of such an approximation is equivalent to an upper bound on $K$-functional of interpolation of the Lebesgue space $L_{q^*}(0;1)$ and Sobolev space $W^1_{r^*}(0;1)$. We will actively use this interpolation in the next subsection, and now we show duality of $K$-functional and value $S_r^q(u_1,M)$ of infimum in problem~\eqref{problem:approximation_straight}. Recall that $K$-functional is defined in our case as follows:
\begin{equation}
\label{eq:K_functional}
		K_{q^*}^{r^*}(M,u_1) \eqdef \inf_w(\|w\|_{W^1_{r^*}} + M\|w-u_1\|_{q^*}).
\end{equation}
where $\frac{1}{r}+\frac{1}{r^*}=\frac{1}{q}+\frac{1}{q^*}=1$.

In the following proposition, we use a functional that is very similar to $K_{q^*}^{r^*}$: we only change the term $\|w\|_{W^1_{r^*}}$ to the term $\|\dot w\|_{r^*}$. In fact, this change does not affect the interpolation procedure, which we explain in the next section.

\begin{proposition}[dual interpolational estimate on $K$-functional]
\label{prop:dual_problem}

	For any $u_1\in L_\infty(0;1)$, $1\le r<\infty$, $1\le q<\infty$, and $M\ge0$, the following equality holds\footnote{The second term is in fact $K$-functional up to some minor terms, and the first term is its dual interpolation estimate on control $u_1$}
	\begin{equation}
	\label{eq:u_1_approx}
		S_q^r(u_1,M) +
		\inf_{w\in W^1_{r^*}(0;1)}\big(
			\|\dot w\|_{r^*} + M\|u_1-w\|_{q^*}
		\big)= 0
	\end{equation}
\end{proposition}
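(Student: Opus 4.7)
The plan is to recognize $-S_r^q(u_1,M)$ as the norm of a continuous linear functional restricted to a closed subspace of a product Banach space, and then apply the Hahn--Banach extension theorem to turn the primal supremum into an infimum over the annihilator of that subspace. Concretely, I would set $X=L_r(0,1)\times L_q(0,1)$ with the scaled norm $\|(f,g)\|_X=\max\bigl(\|f\|_r,\|g\|_q/M\bigr)$, and consider the subspace $C=\{(\varphi,\dot\varphi):\varphi\in\mathring W^1_q(0,1)\}\subset X$. Closedness follows easily: if $(\varphi_n,\dot\varphi_n)\to(f,g)$ in $X$, then $\varphi_n(t)=\int_0^t\dot\varphi_n\,ds$ converges uniformly to $\int_0^t g\,ds$, so the limit $f$ lies in $\mathring W^1_q$ with $\dot f=g$. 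The linear functional $\ell(f,g)=\int_0^1 u_1 g\,dt$ is continuous on $X$ because $u_1\in L_\infty$, and straight from the definition of $S_r^q$ we get
\[
-S_r^q(u_1,M)=\sup_{v\in C,\,\|v\|_X\le 1}\ell(v)=\|\ell|_C\|_{C^*}.
\]

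By Hahn--Banach, $\ell|_C$ admits a norm-preserving extension to $X$, and since any extension has the form $\ell+z$ with $z$ in the annihilator $C^\perp\subset X^*$, we obtain
\[
-S_r^q(u_1,M)=\inf_{z\in C^\perp}\|\ell+z\|_{X^*}.
\]
Next I would identify the two pieces on the right hand side. A standard computation gives $X^*=L_{r^*}(0,1)\times L_{q^*}(0,1)$ with dual norm $\|(h,k)\|_{X^*}=\|h\|_{r^*}+M\|k\|_{q^*}$. For $C^\perp$, the condition $\int h\varphi+\int k\dot\varphi=0$ for every $\varphi\in\mathring W^1_q$, combined with $\varphi(0)=\varphi(1)=0$ and integration by parts, forces the distributional identity $h=\dot k$; consequently $k\in L_{q^*}$ must admit a weak derivative lying in $L_{r^*}$. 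Substituting $w=-k$ turns the right hand side into $\|\dot w\|_{r^*}+M\|u_1-w\|_{q^*}$, and the stated equality follows upon taking the infimum over $w\in W^1_{r^*}(0,1)$. Note that any such $w$ automatically lies in $L_{q^*}(0,1)$ by the one-dimensional Sobolev embedding $W^1_{r^*}(0,1)\hookrightarrow C([0,1])$, so the resulting expression is well defined.

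The main delicacy I anticipate is the clean identification of $C^\perp$. The integration-by-parts step should first be carried out against test functions $\varphi\in C_0^\infty(0,1)$, where the distributional identity $h=\dot k$ is standard, and then extended to all $\varphi\in\mathring W^1_q$ using density of $C_0^\infty(0,1)$ in $\mathring W^1_q$ together with continuity of the defining functionals on $X$. One must also check that any $k\in L_{q^*}$ whose distributional derivative lies in $L_{r^*}$ has an absolutely continuous representative, so that the identification with a genuine Sobolev function $w$ is legitimate and the formula $\|\dot w\|_{r^*}+M\|u_1-w\|_{q^*}$ matches exactly the infimum written in the statement. Beyond these technicalities the argument is a direct application of the Hahn--Banach dictionary between norms on quotients and norms on annihilators.
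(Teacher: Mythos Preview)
Your argument is correct and considerably shorter than the paper's. The paper embeds problem~\eqref{problem:approximation_straight} into a two-parameter family of perturbed convex problems, computes the Legendre--Fenchel conjugate $F^*$, writes down the dual family explicitly, and then spends the bulk of the proof establishing the absence of a duality gap at the relevant parameter values via coercivity of the dual functional, existence of minimizers, and weak$^*$ lower semicontinuity (Banach--Alaoglu plus $F^{**}=F$). Your route bypasses all of this machinery: once you recognize $-S_r^q(u_1,M)$ as the operator norm of $\ell|_C$, the Hahn--Banach identity $\|\ell|_C\|_{C^*}=\min_{z\in C^\perp}\|\ell+z\|_{X^*}$ gives the equality in one stroke, and the identification of $C^\perp$ with $\{(\dot k,k):k\in W^1_{r^*}\}$ is a clean du~Bois--Reymond argument. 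What you gain is brevity and transparency; what the paper's approach buys is a more general template (the perturbed-family Fenchel scheme) that would adapt to nonlinear constraints where the unit ball is not the product of two balls. Two small points to tidy up: treat $M=0$ separately (the paper does this in one line, since the constraint $\|\dot\varphi\|_q\le 0$ forces $\varphi\equiv 0$), and note explicitly that the symmetry $\varphi\mapsto -\varphi$ of the feasible set is what lets you drop the absolute value when equating the supremum with the operator norm.
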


\begin{proof}

	The statement trivially holds for $M=0$. So we assume $M>0$.
	
	Problem~\eqref{problem:approximation_straight} is convex. Hence, it is natural to apply duality theory of convex problems. In order to construct the dual problem, we should include problem~\eqref{problem:approximation_straight} into a family of convex problems. So let us take arbitrary $\psi=(\psi_0,\psi_1)\in L_r(0;1)\times L_q(0;1)$ and $\varphi^*\in W^1_{q^*}(0;1)$ and consider the following family of convex problems, which depend on parameters $\varphi^*$ and $\psi=(\psi_0,\psi_1)$:
	\begin{equation}
	\label{problem:straight_family}
		\begin{gathered}
			-\int_0^1(\dot\varphi^*\dot\varphi + \varphi^*\varphi)\,dt\to\min_{\varphi\in W^1_q(0;1)}\\
			\begin{cases}
				\|\varphi-\psi_0\|_r\le 1;\\
				\|\dot\varphi-\psi_1\|_q\le M;\\
				\varphi(0)=\varphi(1)=0.
		\end{cases}
		\end{gathered}
	\end{equation}
	Further, we show that problem~\eqref{problem:approximation_straight} is in family~\eqref{problem:straight_family} for $\psi_0=\psi_1=0$ and under an appropriate choice of parameter $\varphi^*$.
	
	Recall that a dual family to a family of optimization problems given in the following standard form (with parameters $\psi$ and $\varphi^*$)
	\begin{equation}
	\label{eq:straight_family_general}
		F(\varphi,\psi) - \langle \varphi^*,\varphi\rangle\to\min_\varphi
	\end{equation}
	is a family of optimization problems with the same parameters $\psi$ and $\varphi^*$ of the form
	\begin{equation}
	\label{eq:dual_family_general}
		F^*(\varphi^*,\psi^*) - \langle\psi,\psi^*\rangle\to\min_{\psi^*}
	\end{equation}
	where $F^*$ is the Legendre--Young--Fenchel convex conjugate to $F$. We emphasize that parameters $\varphi^*$ and $\psi$ are common for the both families, but original family problems are minimizing in $\varphi$, and dual family problems are minimizing in $\psi^*$.
	
	Usually, considerations of dual convex optimization problems require a careful choice of pair of spaces in duality. So we assume that $(\varphi,\varphi^*)$ and $(\psi,\psi^*)$ belong to some spaces $(\Phi,\Phi^*)$ and $(\Psi,\Psi^*)$, correspondingly. Moreover, we equip both pair of spaces $(\Phi,\Phi^*)$ and $(\Psi,\Psi^*)$ with the weak and weak$^*$ topologies, correspondingly (see~\cite{TikhomirovMagaril} for details). This careful choice is needed in the first place for proper computation of convex conjugates, e.g.\ $F^*$ and $F^{**}$.
	
	The dual family of optimization problems can be very useful as it is usually possible to prove that there is no duality gap between the origin family and the dual one. Namely, it means that the sum of infima in both families identically vanish for any choice of parameters $\psi$ and $\varphi^*$. Let us denote by $S_{\varphi^*}(\psi)$ the value of the corresponding problem in the original family (note that $\varphi^*$ enters the original family linearly, and $\psi$ enters it nonlinearly), and by $D_\psi(\varphi^*)$ the value of the corresponding problem in the dual family (here $\psi$ enters the dual family linearly, and $\varphi^*$ enters it nonlinearly). Then, a trivial inequality $F(\varphi,\psi) + F^*(\varphi,\psi)\ge \langle\varphi,\varphi^*\rangle + \langle\psi,\psi^*\rangle$ implies that $S_{\varphi^*}(\psi)+D_\psi(\varphi^*)\ge 0$. Moreover, we show below that there is no duality gap, i.e.\  $S_{\varphi^*}(\psi)+D_\psi(\varphi^*)=0$.
	
	\medskip
	
	Now we compute the dual family to the original one given in~\eqref{problem:straight_family}. In order to do so, let us rewrite family~\eqref{problem:straight_family} in the standard form. We have $\Phi=W^1_q(0;1)$, $\Phi^*=W^1_{q^*}(0;1)$, $\Psi=L_r[0;1]\times L_q(0;1)$, and $\Psi=L_{r^*}[0;1]\times L_{q^*}(0;1)$. We equip both spaces $\Phi$ and $\Psi$ with the weak topologies, and spaces $\Phi^*$ and $\Psi^*$ with the weak$^*$ topologies. Under this choice of topologies, pairs $(\Phi,\Phi^*)$ and $(\Psi,\Psi^*)$ become pairs of spaces in duality (see~\cite{TikhomirovMagaril}).
	
	Let us now write down a convenient formula for functional~$F$. Obviously, in order to throw out restrictions~\eqref{problem:straight_family} on $\varphi$ (which are formally not included into the standard form~\eqref{eq:straight_family_general}), we define $F$ as follows: $F(\varphi,\psi)=0$ if $\varphi$ does satisfy restrictions in~\eqref{problem:straight_family}, and $F(\varphi,\psi)=\infty$ if it doesn't:
	\[
		F(\varphi,\psi) = \begin{cases}
			0,&\text{if }\|\varphi-\psi_0\|_r\le 1,\ \|\dot\varphi-\psi_1\|_q\le M\text{ and } \varphi(0)=\varphi(1)=0;\\
			\infty,&\text{otherwise}.
		\end{cases}
	\]
	
	Before we proceed to computing the dual family, let us find a particular value of parameter $\varphi^*$ that includes the original problem~\eqref{problem:approximation_straight} into family~\eqref{problem:straight_family}. In other words, we are seeking for a function $\varphi^*=\hat\varphi^*$ such that
	\[
		\forall\varphi\in W^1_q[0;1]\qquad 
		\langle\hat\varphi^*,\varphi\rangle\eqdef
		\int_0^1(\varphi\hat\varphi^* + \dot\varphi\dot{\hat\varphi}^*)\,dt\equiv
		\int_0^1u_1\dot\varphi\,dt.
	\]
	Linear functional $\int_0^1u_1\dot\varphi\,dt$ is continuous on $W^1_q(0;1)$. Hence, the desired function $\hat\varphi^*$ exists and belongs to $(W^1_q(0;1))^*=W^1_{q^*}(0;1)$. So $\int_0^1(\dot\varphi(\dot{\hat\varphi}^*-u_1)+\varphi\hat\varphi^*)\,dt=0$ for any function $\varphi\in W^1_q(0;1)$. Therefore, the du Bois--Reymond lemma implies that $\dot{\hat\varphi}^*-u_1\in \mathring W^1_{q^*}(0;1)$ and
	\begin{equation}
	\label{eq:hat_varphi_star}
		\ddt(\dot{\hat\varphi}^*-u_1)=\hat\varphi^*.
	\end{equation}
	
	Under the described choice of $F$, original family~\eqref{problem:straight_family} is represented in standard form~\eqref{eq:straight_family_general}, and original problem~\eqref{problem:approximation_straight} is included into the family when $\psi=0$ and $\varphi^*=\hat\varphi^*$.
	
	Let us now compute the dual family (having the same parameters $\varphi^*$ and $\psi$). In order to do so, we first compute $F^*$:
	\[
		F^*(\varphi^*,\psi^*) = 
		\sup_{\varphi,\psi}\big(
			\langle\varphi^*,\varphi\rangle + \langle\psi^*,\psi\rangle - F(\varphi,\psi)
		\big) =
		\sup_{\substack{\|\varphi-\psi_0\|_r\le 1\\ \|\dot\varphi-\psi_1\|_q\le M\\\varphi(0)=\varphi(1)=0}}
		\int_0^1(\dot\varphi^*\dot\varphi + \varphi^*\varphi + \psi_0^*\psi_0 + \psi_1^*\psi_1)\,dt.
	\]
	Let us substitute $\eta_0=\psi_0-\varphi\in L_r(0;1)$ and $\eta_1=\psi_1-\dot\varphi\in L_q(0;1)$, or $\psi_0=\varphi+\eta_0$ and $\psi_1=\dot\varphi+\eta_1$:
	\[
		\begin{aligned}
			F^*(\varphi^*,\psi^*) = &
			\sup_{\substack{\|\eta_0\|_r\le 1\\ \|\eta_1\|_q\le M\\\varphi(0)=\varphi(1)=0}}
			\int_0^1(\dot\varphi^*\dot\varphi + \varphi^*\varphi + \psi_0^*(\varphi+\eta_0) + \psi_1^*(\dot\varphi+\eta_1))\,dt =\\
			=&\sup_{\|\eta_0\|_r\le 1} \int_0^1 \psi_0^*\eta_0\,dt + 
			\sup_{\|\eta_1\|_q\le M} \int_0^1 \psi_1^*\eta_1\,dt +
			\sup_{\varphi(0)=\varphi(1)=0} \int_0^1(\dot\varphi(\dot\varphi^*+\psi_1^*) + \varphi(\varphi^*+\psi_0^*))\,dt.
		\end{aligned}
	\]
	
	The first supremum in the right-hand side is equal to $\|\psi_0^*\|_{r^*}$, and the second one is equal to $M\|\psi_1^*\|_{q^*}$. So let us compute the third one. Terminal constraints $\varphi(0)=\varphi(1)=0$ are linear in $\varphi$. The inside integral is also linear in $\varphi$. Hence, there is an alternative: either the inside integral identically vanishes for all $\varphi\in W^1_q(0;1)$ having $\varphi(0)=\varphi(1)=0$ (and, in this case, the third supremum is equal to $0$) or there exists a function $\varphi$ having $\varphi(0)=\varphi(1)=0$ such that the inside integral does not vanish (and, in this case, the third supremum is equal to $\infty$). So, let us find all $\psi^*=(\psi_0^*,\psi_1^*)$ such that
	\[
		\int_0^1 (\dot\varphi(\dot\varphi^*+\psi_1^*) + \varphi(\varphi^*+\psi_0^*))\,dt = 0
	\]
	for all $\varphi$ having $\varphi(0)=\varphi(1)=0$. According to the du Bois--Reymond lemma, the latter is equivalent to the following: $\ddt(\dot\varphi^*+\psi_1^*)\in L_1(0;1)$ and $\ddt(\dot\varphi^*+\psi_1^*)=\varphi^*+\psi_0^*$, i.e.\ $\dot\varphi^*+\psi_1^*\in W^1_{r^*}(0;1)$.
	
	Thus, dual family~\eqref{eq:dual_family_general} of convex problems has the following form:
	\begin{equation}
	\label{problem:dual_family}
		\begin{gathered}
			\|\psi_0^*\|_{r^*} + M\|\psi_1^*\|_{q^*} - 
			\int_0^1(\psi_0\psi_0^*+\psi_1\psi_1^*)\to\min_{\psi^*\in L_{r^*}(0;1)\times L_{q^*}(0;1)}\\
			\ddt(\dot\varphi^*+\psi_1^*)=\varphi^*+\psi_0^*.
		\end{gathered}
	\end{equation}
	Recall that we denote by $D_\psi(\varphi^*)$ the value of the corresponding problem in the dual family.
	
	Functional $F$ is convex (as both norms $\|\cdot\|_r$ and $\|\cdot\|_q$ are convex, and terminal constrains $\varphi(0)=\varphi(1)=0$ are linear) and closed (as any convex lower semicontinuous in the strong topology functional is also lower semicontinuous in the weak topology\footnote{Indeed, any  strongly closed convex set is weakly closed by the Hahn--Banach theorem.}). Therefore, the Fenchel--Moreau theorem implies $F=F^{**}$. Hence, the bidual family (i.e.\ dual to dual family~\eqref{problem:dual_family}) coincides with the original one.
	
	\medskip
	
	Let us now show that there is no duality gap when parameters $\varphi^*$ and $\psi$ have the desired values, namely, $\psi=0$ and $\varphi^*=\hat\varphi^*$. So let us prove that $S_{\hat\varphi^*}(0)+D_0(\hat\varphi^*)=0$. Equalities $D_\psi(\varphi^*) + S_{\varphi^*}^{**}(\psi)=0$ and $S_{\varphi^*}(\psi) + D_\psi^{**}(\varphi^*)=0$ can be proved in standard way (see~\cite{TikhomirovMagaril}). Convexity of both functionals $F$ and $F^*$ implies convexity of $S_{\varphi^*}$ (on $\psi$) and $D_\psi$ (on $\varphi^*$). Therefore, the Fenchel--Moreau theorem implies $S_{\varphi^*}^{**} = \cl S_{\varphi^*}$ and $D_\psi^{**}=\cl D_\psi$. So it remains to show that functional $D_0$ is closed.
	
	Let us make a natural change of variables in~\eqref{problem:dual_family} and denote $v=\dot\varphi^*+\psi_1^*$. Then $v\in W^1_{r^*}(0;1)$ and problem~\eqref{problem:dual_family} for $\psi=0$ takes the following form:
	\[
		G(v)=\|\dot v-\varphi^*\|_{r^*} + M\|v-\dot\varphi^*\|_{q^*}\to\min_v.
	\]
	Clearly, the infimum value of $G$ does not exceed $\Delta=\|\varphi^*\|_{r^*} + M\|\dot\varphi^*\|_{q^*}$. We claim that function $G(v)$ is coercive. Indeed, if $\|\dot v-\varphi^*\|_{r^*}>\Delta$ or $\|v-\dot\varphi^*\|_{q^*}>\frac1M\Delta$, then $G(v)>\Delta$. Therefore, infimum of $G(v)$ over all functions $v$ coincides with that over the set of functions $v$ having $\|v-\dot\varphi^*\|_{q^*}\le\Delta$ and $\|\dot v-\varphi^*\|_{r^*}\le  \frac1M\Delta$. This set is convex, bounded, and strongly closed. Hence, it is weak$^*$ compact in $W^1_{r^*}(0;1)$ by Banach--Alaoglu theorem. Consequently, for any $\varphi^*$, there exists a global minimum~$v_{\varphi^*}$. Moreover, both norms $\|v_{\varphi^*}\|_{q^*}$ and $\|\dot v_{\varphi^*}\|_{r^*}$ are bounded as $G(v_{\varphi^*})\le\Delta$:
	\begin{equation}
	\label{eq:v_is_bounded}
		\|v_{\varphi^*}\|_{q^*} \le \frac1M\|\varphi^*\|_{r^*} + 2\|\dot\varphi^*\|_{q^*}
		\quad\text{and}\quad
		\|\dot v_{\varphi^*}\|_{r^*} \le 2\|\varphi^*\|_{r^*} + M\|\dot\varphi^*\|_{q^*}.
	\end{equation}
	
	Now we are able to show that $D_0$ is weak$^*$ lower semicontinuous using existence of a solution and bounds~\eqref{eq:v_is_bounded}. Let $\varphi_0^*\in W^1_{q^*}(0;1)$ be an arbitrary function. Consider a sequence $\varphi^*_m$ that converges to $\varphi^*_0$ in the weak$^*$ topology on $W^1_{q^*}(0;1)$ as $m\to\infty$ and such that the sequence $D_0(\varphi^*_m)$ has a limit. So we need to prove that $D_0(\varphi^*_0)\le\lim_{m\to\infty} D_0(\varphi^*_m)$.
	
	Since $\psi=0$, we have
	\[
		D_0(\varphi^*)=\inf_{\psi^*}F^*(\varphi^*,\psi^*) = G(v_{\varphi^*})
	\]
	as the infimum above is attained at  $\psi^*=(\dot v_{\varphi^*}-\varphi^*,v-\dot\varphi^*(\varphi^*))$ for any choice of $\varphi^*$. Consider the following sequence of solutions
	\[
		\psi^*_m=(\dot v_{\varphi^*_m}-\varphi^*_m,v_{\varphi^*_m}-\dot\varphi^*_m).
	\]
	Since $\varphi^*_m$  weakly$^*$ converges, the $\varphi_m^*$ norms are bounded by uniform boundedness principle. Therefore, \eqref{eq:v_is_bounded} implies that the $v_{\varphi^*_m}$ norms are bounded as $m\to\infty$. So using the Banach--Alaoglu theorem for the second time, we are able to find a subsequence of $v_{\varphi^*_m}$ that weakly$^*$ converges. Without loss of generality, we keep indices $m$ for the subsequence. Hence, sequence $\psi^*_m$ also weakly$^*$ converges to a function $\psi^*_0$ as $m\to\infty$. It remain to note that functional $F^*$ is lower weak$^*$ semicontinuous (as $F^*$ is convex conjugate to $F$), and so
	\[
		\lim_{m\to\infty} D_0(\varphi^*_m)=
		\lim_{m\to\infty}F^*(\varphi^*_m,\psi^*_m)\ge 
		F^*(\varphi^*_0,\psi^*_0)\ge D_0(\varphi_0^*).
	\]
	The latter proves that $D_0$ is weak$^*$ lower semicontinuous. Consequently, $S_{\varphi^*}(0)+D_0(\varphi^*)=0$ for all $\varphi^*$. In particular, it is true for $\varphi^*=\hat\varphi^*$.
	
	\medskip
	
	Now, as we have already shown absence of the duality gap, we can construct an estimate on the rate of $u_1$ approximation by functions from $W^1_{q^*}(0;1)$. In order to do so, let us simplify the dual family under the required choice of parameters $\psi=0$ and $\varphi^*=\hat\varphi^*$. According to~\eqref{eq:hat_varphi_star}, ODE in~\eqref{problem:dual_family} becomes $\ddt(u_1+\psi_1^*)=\psi_0^*$. Obviously, $u_1+\psi_1^*\in W^1_{r^*}(0;1)$. So, if $\psi=0$ and $\varphi^*=\hat\varphi^*$, then substitution $w=u_1+\psi_1^*$ gives the following equivalent problem:
	\[
		\|\dot w\|_{r^*} + M\|w-u_1\|_{q^*}\to \min_{w\in W^1_{r^*}}.
	\]
	Sum of that problem value and $S_r^q(u_1,M)$ is equal to 0 as there is no duality gap. So~\eqref{eq:u_1_approx} is proved.

\end{proof}

\subsection{Interpolation}
\label{sec:interpolation}

Duality proved in Proposition~\ref{prop:dual_problem} together with the interpolational estimate proved in Proposition~\ref{prop:interpolation_estimate} and Corollary~\ref{cor:SM_estimate} imply the following estimate on the rate of control $u_1$ approximation by smooth functions. Namely, we have proved that, for any $M\ge 0$, there exists a function $w\in W^1_{r^*}(0;1)$ such that
\begin{equation}
\label{problem:dual}
	\|\dot w\|_{r^*} + M\|w-u_1\|_{q^*} \le 2c''l(x)^\kappa M^{1-\theta}
\end{equation}
where $q^*=q/(q-1)\ge 2$, $r^*=2$ in the case \cw, and $r^*=\infty$ in the case \cg, and $\theta$ and $\kappa$ are defined by $\zeta$ in~\eqref{eq:u_1_dot_phi_middle_estimate}. According to Proposition~\ref{prop:interpolation_estimate}, $\zeta=\frac2s$ in the case \cw and $\zeta=\frac1s$ in the case \cg. 

Compare lhs of~\eqref{problem:dual} with $K$-functional given in~\eqref{eq:K_functional}. So we need to estimate norm $\|w\|_{W^1_{r^*}(0;1)}$. There are a lot of different pairwise equivalent norms on space $W^1_{r^*}(0;1)$. For our purposes, it is convenient to use $\|w\|_{W^1_{r^*}(0;1)}=\|\dot w\|_{r^*} + \|w\|_{q^*}$. The first term is trivially estimated by~\eqref{problem:dual}. In order to estimate $\|w\|_{q^*}$, let us use inequality $|u_1|\le l(x)$: in the case \cg, we have $0<\kappa\le 1$ and so $l(x)\le l(x)^\kappa L^{1-\kappa}$; and in the case \cw, we have $l(x)=l(x)^\kappa L^{1-\kappa}$. Therefore, if $M>1$, then
\[
	\|w\|_{q^*} \le (2c''+L^{1-\kappa})l(x)^\kappa M^{-\theta}\le (2c''+L^{1-\kappa})l(x)^\kappa M^{1-\theta}
\]
and
\[
	\|\dot w\|_{W^1_{r^*}} + M\|w-u_1\|_{q^*}\le (4c''+L^{1-\kappa})l(x)^\kappa M^{1-\theta}.
\]
If the opposite $0\le M\le 1$ holds, then we can always take $w=0$, which guarantees
\[
	\|\dot w\|_{W^1_{r^*}} + M\|w-u_1\|_{q^*} = M\|u_1\|_{q^*}\le l(x)M\le  L^{1-\kappa}l(x)^\kappa M^{1-\theta}.
\]
Hence, $K$-functional of interpolation satisfies the following estimate:
\[
	K_{q^*}^{r^*}(M,u_1) = \inf_w(\|w\|_{W^1_{r^*}} + M\|w-u_1\|_{q^*})\le c'''l(x)^\kappa M^{1-\theta}
\]
where $c''' \ge 4c''+L^{1-\kappa}$. Since $\kappa$ is between $1$ and $(1+\zeta)/(2-\zeta)$, we can put $c''' = 4c''+\max\{1,L^{(1-2\zeta)/(2-\zeta)}\}$.

Thus, the control $u_1$ belongs to the following interpolation space (see \cite[section 1.3.1]{Triebel}):
\begin{equation}
\label{eq:u_1_in_interpolation_space}
	u_1\in X=(W^1_{r^*}(0;1),L_{q^*}(0;1))_{1-\theta,\infty}=(L_{q^*}(0;1),W^1_{r^*}(0;1))_{\theta,\infty}.
\end{equation}
So, it remains to reduce the interpolation space in the right-hand side to a standard one.

Interpolation~\eqref{eq:u_1_in_interpolation_space} belongs to a so called nondiagonal case as $\frac{1-\theta}{q^*} + \frac{\theta}{r^*}\ne \frac1\infty$. Unfortunately, no exact formula for the nondiagonal interpolation is known. Nonetheless, we can exchange the nondiagonal interpolation space $X$ by a little bit bigger space, which we are actually able to compute explicitly. So let us consider the cases \cg and \cw separately.

\medskip

In the case \cg, we have $r^*=\infty$, $q^*\ge 2$, and $\theta$ is given in~\eqref{eq:u_1_dot_phi_middle_estimate}. Hence, $X$ defined in~\eqref{eq:u_1_in_interpolation_space} becomes
\[
	X=(L_{q^*}(0;1),W^1_{\infty}(0;1))_{\theta,\infty}.
\]
Let us use embedding theorems of Sobolev type (see \cite[4.6.2]{Triebel}). First, $L_{q^*}(0;1)=H^0_{q^*}(0;1)\subset B^0_{q^*,q^*}(0;1)$ as $q^*\ge 2$. Second, we fix $q^*< a<\infty$ and use embedding $W^1_{\infty}(0;1)\subset B^1_{a,a}(0;1)$. Obviously, $B^1_{a,a}(0;1)\subset B^0_{q^*,q^*}(0;1)$. Hence, for any $1\le p\le\infty$ and $0<\beta<\theta$, we obtain (see \cite[1.3.3]{Triebel})
\[
	X\subset (B^0_{q^*,q^*}(0;1),B^1_{a,a}(0;1))_{\theta,\infty} \subset (B^0_{q^*,q^*}(0;1),B^1_{a,a}(0;1))_{\beta,p}.
\]
Let us make a choice of $p$ that leads to the diagonal type interpolation, namely
\[
	\frac1p=\frac{1-\beta}{q^*} + \frac{\beta}{a}.
\]
Then, the corresponding theorem for the diagonal type interpolation (see \cite[4.3.1]{Triebel}) gives
\[
	X\subset (B^0_{q^*,q^*}(0;1),B^1_{a,a}(0;1))_{\beta,p} = 
	B^{\beta}_{p,p}(0;1)\subset B^{\beta}_{p,\infty}(0;1).
\]

Now, we find values that parameter $\beta$ can take assuming that a fixed value of $p$ is given. So the following restrictions must be fulfilled: $a>q^*\ge 2$ and
\[
	0<\beta=\frac{\frac1{q^*}-\frac1p}{\frac1{q^*}-\frac1a}<
	\theta=\frac{(q-1)\zeta}{q-\zeta}=\frac{\zeta}{q^*(1-\zeta)+\zeta}=\frac{1}{q^*(s-1)+1}.
\]
The latter equality holds as $\zeta=\frac1s$ in the case \cg. Since $a$ can be taken arbitrary large, the previous pair of inequalities is equivalent to the following:
\[
	0<1-\frac{q^*}{p} < \frac{1}{q^*(s-1)+1}
	\qquad\Leftrightarrow\qquad
	p-\frac{1}{s-1}<q^*<p.
\]
Consequently, if $p-\frac1{s-1}\ge 2$, then $q^*$ can be taken arbitrary close to $p-\frac1{s-1}$. So, proceeding to the limit as $a\to\infty$ and $q^*\to p-\frac1{s-1}$, we obtain $\theta\to \frac{1}{p(s-1)}$. Note that if $q^*\to p-\frac1{s-1}$, then $\kappa\to 1-\frac{s-2}{p(s-1)}$. Since $B^{\beta}_{p,\infty}\subset B^{\beta-\varepsilon}_{p,\infty}$ for an arbitrary $\varepsilon>0$, number $\kappa$ can be taken arbitrary close to $1-\frac{s-2}{p(s-1)}$ independently of the choice of $\beta$. Hence, for any $p\ge 2+\frac1{s-1}$, $0<\beta<\frac{1}{p(s-1)}$, and $0<\kappa<1-\frac{s-2}{p(s-1)}$, there exists a constant $C>0$ such that
\[
	\|u_1\|_{B^\beta_{p,\infty}(0;1)}\le C l(x)^\kappa.
\]

Now, let us enlarge the interval of possible $\kappa$ values using embeddings of Sobolev type. Let us find $\tilde p>p$ such that we are still able to select a number $\tilde\beta<\frac{1}{\tilde p(s-1)}$ such that the embedding $B^{\tilde\beta}_{\tilde p,\infty}(0;1)\subset B^{\beta}_{p,\infty}(0;1)$ holds. Obviously, this embedding holds if the following conditions are satisfied (see \cite[4.6.2]{Triebel})
\[
	\beta<\tilde\beta
	\quad\text{and}\quad
	\beta-\frac1p < \tilde \beta - \frac1{\tilde p}.
\]
Since $\tilde p>p$, the second inequality follows from the first one. Thus, we need to satisfy only $\beta<\tilde\beta<\frac{1}{\tilde p(s-1)}$. Therefore, existence of the required $\tilde\beta$ is guaranteed if $\tilde p>\frac{1}{\beta(s-1)}$. Note that $\frac{1}{\beta(s-1)}>p$ as $\beta<\frac{1}{p(s-1)}$.

Thus, for some constant $C'>0$, we have
\[
	\|u_1\|_{B^\beta_{p,\infty}(0;1)}\le C'\|u_1\|_{B^{\tilde\beta}_{\tilde p,\infty}(0;1)}.
\]
The last norm (as we have already shown) can be estimated by $\tilde Cl(x)^{\tilde\kappa}$ where $\tilde\kappa<1-\frac{s-2}{\tilde p(s-1)}$. Since $\tilde p$ can be taken arbitrary close to $\frac{1}{\beta(s-1)}$, $\tilde\kappa$ can take values that are arbitrary close to $1-\beta(s-2)$. In other words, for any $\tilde\kappa<1-\beta(s-2)$, there exists a constant $C''=C'\tilde C$ such that
\[
	\|u_1\|_{B^\beta_{p,\infty}(0;1)} \le C'' l(x)^{\tilde\kappa},
\]
and item \cg in Theorem~\ref{thm:main} is proved.

\medskip

Now we proceed to the case \cw. In that case, we have $r^*=2$. So space $X$ defined in~\eqref{eq:u_1_in_interpolation_space} takes the form
\[
	X=(L_{q^*}(0;1),W^1_2(0;1))_{\theta,\infty}.
\]

Since $W^1_2(0;1)=B^1_{2,2}(0;1)$ and $L_{q^*}(0;1)\subset B^0_{q^*,q^*}(0;1)$ (see \cite[4.6.2]{Triebel}), we obtain
\[
	X\subset (B^0_{q^*,q^*}(0;1),B^1_{2,2}(0;1))_{\theta,\infty}.
\]
Again, $B^1_{2,2}(0;1)\subset B^0_{q^*,q^*}(0;1)$ as $1-\frac12 > 0-\frac1{q^*}$ (see \cite[4.6.2]{Triebel}). Therefore, for any $0<\alpha<\theta$ and $1\le p\le\infty$, we have (see \cite[1.3.3]{Triebel})
\[
	X\subset (B^0_{q^*,q^*}(0;1),B^1_{2,2}(0;1))_{\alpha,p}.
\]
If we take $p$ that satisfies $\frac{1-\alpha}{q^*} + \frac{\alpha}{2}=\frac1p$, then we obtain the following diagonal type interpolation:
\[
	(B^0_{q^*,q^*}(0;1),B^1_{2,2}(0;1))_{\alpha,p} = B^{\alpha}_{p,p}(0;1) \subset B^{\alpha}_{p,\infty}(0;1).
\]

We emphasize that $p=((q^*)^{-1} + \alpha(\frac12-(q^*)^{-1}))^{-1}\to((q^*)^{-1} + \theta(\frac12-(q^*)^{-1}))^{-1}+0$ as $\alpha\to \theta-0$ since $q^*\ge 2$. Therefore $p\ge 2$. Further, as before, we fix $p\ge 2$ and check values that parameter $\alpha$ can take:
\[
	0<\alpha=\frac{\frac1p-\frac1{q^*}}{\frac12-\frac1{q^*}} < \theta = \frac{\zeta}{q^*(1-\zeta)+\zeta}=\frac{2}{q^*(s-2)+2}
\]
The latter equality holds as $\zeta=\frac2s$ in the case \cw. These inequalities are equivalent to the following ones:
\[
	p<q^*<\frac{p(s-1)-2}{s-2}
\]
This pair of inequalities is compatible if $p>2$. Thus, for any $p>2$, number $q^*$ can be taken arbitrary close to the right-hand side, and $\theta$ becomes arbitrary close to $\frac{2}{p(s-1)}$. Therefore,
\begin{equation}
\label{final:u1_embedding_local}
	u_1\in B^\alpha_{p,\infty}(0;1)
\end{equation}
for any $p>2$ and $\alpha<\frac{2}{p(s-1)}$. So we have proved item \cw in Theorem~\ref{thm:main} for $p>2$. It remains to consider the case $p=2$. If $p=2$, then for $q^*=2$, $r^*=2$, and $\theta=\frac{1}{s-1}$ (see~\eqref{eq:u_1_dot_phi_middle_estimate} for $\zeta=2/s$), definition \eqref{eq:u_1_in_interpolation_space} gives
\[
	X=(L_2(0;1),W^1_2(0,1))_{\frac1{s-1},\infty}.
\]

Since $L_2(0;1)=H^0_2(0;1)$ and $W^1_2(0;1)=H^1_2(0;2)$ (see \cite[sections 2.3.1 and 4.2.1]{Triebel}), this interpolation belongs to the diagonal case, and according to~\cite[section 4.3.1]{Triebel} we have
\[
	X=(H^0_2(0;1),H^1_2(0,1))_{\frac1{s-1},\infty} = B^{\frac1{s-1}}_{2,\infty}(0;1).
\]
It remains to note that $B^{\frac1{s-1}}_{2,\infty}(0;1)\subset B^\alpha_{2,\infty}(0;1)$ for any $\alpha<\frac1{s-1}$ (see~\cite[section 4.6.2]{Triebel}).

\end{document}